\newtheorem*{rep@theorem}{\rep@title}
\newcommand{\newreptheorem}[2]{%
\newenvironment{rep#1}[1]{%
 \def\rep@title{#2 \ref{##1}}%
 \begin{rep@theorem}}%
 {\end{rep@theorem}}}
\newenvironment{rezabib}
  {\bibdiv\biblist\setupbib}
  {\endbiblist\endbibdiv}
\def\setupbib{\catcode`@=\active}
\def\gatherkey#1#2{\gatherkeyaux{#1}#2\gatherkeyaux}
\def\gatherkeyaux#1#2,#3\gatherkeyaux{\bib{#2}{#1}{#3}}
\newtheorem{thm}{Theorem}[section]
\newtheorem{thrm}[thm]{Theorem}
\newtheorem{prop}[thm]{Proposition}
\newtheorem{remark}[thm]{Remark}
\newtheorem{remarks}[thm]{Remarks}
\newtheorem{lem}[thm]{Lemma}
\newtheorem{cor}[thm]{Corollary}
\newtheorem{conj}[thm]{Conjecture}
\newtheorem{notation}[thm]{Notation}
\newcommand{\QQ}{\mathbb{Q}}
\newcommand{\CC}{\mathbb{C}}
\newcommand{\ZZ}{\mathbb{Z}}
\DeclareMathOperator{\tr}{tr}
\DeclareMathOperator{\Gal}{Gal}
\DeclareMathOperator{\GL}{GL}
\newcommand{\mt}{m(t,u;\ell^k)}
\newcommand{\mtp}{m(t,p;\ell^k)}
\newcommand{\mtone}{m(t_1,u;\ell^k)}
\newcommand{\mttwo}{m(t_2,u;\ell^k)}
\newcommand{\Sts}{S(t_1,t_2;\ell^k)}
\newcommand{\Sttwo}{S(t_1,t_2;2^k)}
\newcommand{\Stt}{S(t;\ell^k)}
\newcommand{\Sk}{\mathcal{S}_k}
\newcommand{\Stwo}{S(t;2^k)}
\newcommand{\zstar}{(\mathbb{Z}/\ell^k\ZZ )^*}
\newcommand{\zstartwo}{(\mathbb{Z}/2^k\ZZ )^*}
\newcommand{\mee}{m_{E_1,E_2}}
\newcommand{\eif}{\emph{\text{if }}}
\newcommand{\eodd}{\emph{\text{ odd}}}
\newcommand{\eand}{\emph{\text{ and }}}
\def\imod#1{\allowbreak\mkern7mu({\operator@font mod}\,\,#1)}
\begin{document}

\title{On the Lang-Trotter Conjecture for two elliptic curves}
\author{Amir Akbary and James Parks}

\thanks{Research of the first author is partially supported by NSERC. Research of the second author is partially supported by a PIMS postdoctoral fellowship.}

\date{\today}

\keywords{\noindent Frobenius distributions, Lang-Trotter conjecture for two elliptic curves, Lang-Trotter constant for two elliptic curves, Hurwitz-Kronecker class number}

\subjclass[2010]{11G05, 11M41.}

\address{Department of Mathematics and Computer Science, University of Lethbridge,\newline
	\rule[0ex]{0ex}{0ex}\hspace{8pt}  4401 University Drive, Lethbridge, AB, T1K 3M4, Canada}
\email{amir.akbary@uleth.ca}
\address{Department of Mathematics and Computer Science, University of Lethbridge,\newline
	\rule[0ex]{0ex}{0ex}\hspace{8pt}  4401 University Drive, Lethbridge, AB, T1K 3M4, Canada\newline
	\rule[0ex]{0ex}{0ex}\hspace{8pt} \textit{Present address}: Department of Mathematics, KTH Royal Institute of Technology, \newline
	\rule[0ex]{0ex}{0ex}\hspace{8pt}  Lindstedtsv\"agen 25, SE-100 44 Stockholm, Sweden}
\email{jparks@kth.se}

\begin{abstract} 
Following Lang and Trotter we  describe a probabilistic model that predicts the distribution of primes $p$ with given  Frobenius traces at $p$ for two fixed elliptic curves over $\mathbb{Q}$.
In addition, we propose explicit Euler product representations for the constant in the predicted asymptotic formula and describe in detail the universal component of this constant. A new feature is that in some cases the $\ell$-adic limits determining the $\ell$-factors of the universal constant, unlike the Lang-Trotter conjecture for a single elliptic curve, do not stabilize. We also prove the conjecture on average over a family of elliptic curves,  which extends the main results of \cite{FM2} and \cite{ADJ},  following the work of David, Koukoulopoulos, and Smith \cite{DKS}.
\end{abstract}

\maketitle

\section{Introduction}
Let $E$ be an elliptic curve defined over $\QQ$. We let $a_p(E)$ denote the trace of the Frobenius endomorphism of $E$ at a prime $p$ of good reduction. Remarkable progress in recent years led to the proof of the Sato-Tate Conjecture by Clozel, Harris, Shepherd-Barron, and Taylor (see \cite{HSBT} and references there), which describes the distribution of $a_p(E)$ in the Hasse interval $(-2\sqrt{p}, 2\sqrt{p})$. However, the related Lang-Trotter Conjecture \cite[p. 33]{LT} from 1976 is still a wide open problem.  

We denote the algebraic closure of $\mathbb{Q}$ by $\overline{\QQ}$ and for a prime $\ell$ we let  $\GL_2(\ZZ_\ell)$ be the group of invertible two by two matrices with coordinates in the set of $\ell$-adic integers $\ZZ_\ell$.  Let $$\rho_E: \Gal(\overline{\QQ}/\QQ) \rightarrow \prod_{\ell}\GL_2(\ZZ_\ell)$$ be the Galois representation associated to $E$ and $\ell$-torsion points. For an integer $m>1$, let $$\phi_m :  \prod_\ell \GL_2(\ZZ_\ell) \rightarrow \GL_2(\ZZ/m\ZZ) $$ be the natural projection map.  We denote the image of $\rho_{E, m}=\phi_m\circ\rho_E$ by $G_E(m)$. Also we write ${G_E(m)}_t$ for the set of elements of $G_E(m)$ with trace $t$. We now state the Lang-Trotter Conjecture.

\begin{conj}(\textbf{Lang-Trotter}) \label{conj: langtrot sing} Let $E$ be an elliptic curve defined over $\QQ$ of conductor $N_E$ and  without complex multiplication. Let $t$ be an integer. Then 
	\begin{equation*}
	\pi_{E,t}(x) :=\#\{p \leq x;~ p\nmid N_E~{\rm and}~ a_p(E)=t\}\sim c_{E,t} \frac{\sqrt{x}}{\log x},
	\end{equation*}
as $x\rightarrow \infty$, where $$c_{E,t}= \frac{2}{\pi} \lim_{m \widetilde{\rightarrow} \infty}  \frac{m\cdot | {G_E(m)}_t|}{| G_E(m)|}.$$
\end{conj}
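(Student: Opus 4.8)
The plan is to recover the conjectural asymptotic of Conjecture~\ref{conj: langtrot sing} from the probabilistic model of Lang and Trotter; since the conjecture is open, what follows is the heuristic derivation that produces the stated main term and constant, not an unconditional proof. The guiding idea is to treat the events $\{a_p(E)=t\}$, as $p$ ranges over primes of good reduction, as though they were independent, to attach to each one a ``probability'' $P(a_p(E)=t)$ manufactured from a local ($\ell$-adic, Galois-theoretic) factor and an archimedean (Sato--Tate) factor, and then to sum these probabilities over $p\le x$.

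First I would isolate the local factor. For every integer $m>1$, the Chebotarev density theorem applied to the extension of $\QQ$ cut out by $\rho_{E,m}$ shows that $\rho_{E,m}(\mathrm{Frob}_p)$ is equidistributed in $G_E(m)$ as $p$ varies, so that the density of primes with $a_p(E)\equiv t\imod{m}$ equals $|{G_E(m)}_t|/|G_E(m)|$. A ``random'' integer is $\equiv t\imod m$ with probability $1/m$; comparing the two, the proportion of primes whose Frobenius trace is coerced onto a prescribed integer value, once the congruence data at level $m$ have been accounted for, is rescaled by the factor $m\cdot|{G_E(m)}_t|/|G_E(m)|$. Letting $m\widetilde{\rightarrow}\infty$ one is led to the $\ell$-adic limit $\lim_{m\widetilde{\rightarrow}\infty} m\cdot|{G_E(m)}_t|/|G_E(m)|$, which, using Serre's open-image theorem and his description of the groups $G_E(\ell^k)$ for a non-CM curve, stabilizes at each prime $\ell$ and therefore exists; this is exactly the quantity appearing, up to the factor $2/\pi$, in $c_{E,t}$.

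Next I would supply the archimedean factor. By the Sato--Tate theorem, $a_p(E)/\sqrt p$ is equidistributed with respect to the semicircle measure, so the ``density'' of the normalized trace at the real point $t/\sqrt p$ translates, for a fixed integer $t$, into the probability $\dfrac{\sqrt{4p-t^2}}{2\pi p}\sim\dfrac{1}{\pi\sqrt p}$ that $a_p(E)$ lands exactly on $t$. Multiplying the local and archimedean contributions and matching normalizations gives the model probability
\begin{equation*}
P\bigl(a_p(E)=t\bigr)\;\approx\;\frac{\sqrt{4p-t^2}}{2\pi p}\cdot\lim_{m\widetilde{\rightarrow}\infty}\frac{m\cdot|{G_E(m)}_t|}{|G_E(m)|}\;\sim\;\frac{c_{E,t}}{2\sqrt p}\qquad(p\to\infty).
\end{equation*}
Finally, summing over primes of good reduction and invoking the prime number theorem together with partial summation, one has $\sum_{p\le x}p^{-1/2}\sim 2\sqrt x/\log x$, so that $\pi_{E,t}(x)=\sum_{p\le x,\,p\nmid N_E}P(a_p(E)=t)\sim c_{E,t}\,\sqrt x/\log x$, as claimed.

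The main obstacle is that this argument is genuinely heuristic: the assumed independence of the events $\{a_p(E)=t\}$ has no known justification, and there is at present no mechanism to interchange the $\ell$-adic limit with the sum over primes while controlling the resulting error terms. Even an unconditional version of the individual steps --- an effective Chebotarev bound with a power saving strong enough, and uniform in levels $m$ of size comparable to $\sqrt p$, compatible with the Sato--Tate input --- is beyond current technology. This is precisely why only upper bounds for $\pi_{E,t}(x)$, together with the sharper results obtainable on average over families, are presently available, and it is this state of affairs that motivates the averaging approach pursued later in the paper.
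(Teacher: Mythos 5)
The statement you were asked about is a conjecture (the original Lang--Trotter conjecture), and the paper offers no proof of it: it is quoted from Lang and Trotter, and the paper's contribution is to adapt the underlying probabilistic model to the two-curve setting in Section 2. Your heuristic derivation is correct as a heuristic and its arithmetic checks out: the semicircle density gives $\sqrt{4p-t^2}/(2\pi p)\sim 1/(\pi\sqrt p)$, multiplying by the $\widetilde{\rightarrow}$-limit $L=\lim m\,|G_E(m)_t|/|G_E(m)|$ gives $c_{E,t}/(2\sqrt p)$ with $c_{E,t}=\tfrac{2}{\pi}L$, and $\sum_{p\le x}p^{-1/2}\sim 2\sqrt x/\log x$ recovers $c_{E,t}\sqrt x/\log x$; the existence of the limit via Serre's open image theorem and the splitting at $m_E$ is also exactly how the paper (following Lang--Trotter) treats the constant. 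The one substantive difference from the model the paper actually builds (for two curves, mirroring Lang--Trotter for one) is how the ``independence'' step is handled: rather than summing invented probabilities over $p$, the paper equips each interval $U_p$ with an explicit probability measure $\mu_{(p,\mathbf{E},m)}$ whose local and archimedean factors are your two ingredients, forms the product measure on $\prod_p U_p$, and invokes Kolmogorov's strong law of large numbers (Theorem \ref{SLLN} and Proposition \ref{consequence}) to conclude that the conjectured asymptotic holds for almost all sequences in that product space; consistency with Chebotarev and with (joint) Sato--Tate is then a theorem about the model (Proposition \ref{consistency}), not an assumption. This formalization buys a precise meaning for the heuristic --- the trace sequence is conjectured to behave like a $\mu$-typical sequence --- whereas your version leaves ``independence'' and the interchange of limits entirely informal. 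Your closing caveats about why no unconditional proof is in reach are accurate and consistent with the discussion in the paper.
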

Here we use the notion $\widetilde{\rightarrow}$ as introduced in \cite[Section 2.3]{CDSS}.  More precisely, for a sequence $(s_n)$, we set 
$$\lim_{m\widetilde{\rightarrow}\infty} s_m:= \lim_{n\rightarrow \infty}s_{m_n}~~{\rm with}~~ m_n:=\prod_{\ell\leq n} \ell^n.$$ 

We note that a similar conjecture has also been proposed for the case that $E$ has complex multiplication (CM) and $t\neq 0$ (see \cite[Conjecture 1]{BJ}). However, the analysis of the constant is different in the CM and non-CM cases. In this paper, we are only interested in the non-CM case.

The constant $c_{E, t}$ can be zero in certain cases. For example for $E: y^2=(x-1)(x-2)(x-3)$ and $p>2$, one can show that $a_p(E)$ is even (see \cite[p. 420]{K}). Thus, by considering $a_p(E)$ as the trace of the Frobenius at $p$ in the division field extension $\mathbb{Q}(E[m])/\mathbb{Q}$ and applying the Chebotarev density theorem we conclude that $|G_E(m)_t|=0$, for odd $t$, as $m\widetilde{\rightarrow}\infty$. Therefore,  $c_{E, t}=0$ for odd $t$. 

Lang and Trotter expressed the constant $c_{E, t}$ as a product of a non-negative rational number $r_{E, t}$, depending on $E$ and $t$, and a positive \emph{universal constant} $c_t$, depending only on $t$. Moreover, in \cite[Theorem 4.2]{LT}, they provide explicit expressions for $r_{E, t}$ and $c_t$. 
A celebrated theorem of Serre \cite{S} states that,  for a non-CM elliptic curve $E$, the image of $\rho_E$ is open in $\prod_{\ell}\GL_2(\ZZ_\ell)$. Therefore there exists a positive integer $m$ such that $\rho_E(\Gal(\overline{\QQ}/\QQ))=\phi_m^{-1} (G_E(m))$. Let $m_E$ be the least such $m$. Then for $m=m_1m_2$ with $m_1 \mid m_E^\infty$ (i.e. the prime divisors of $m_1$ are among the prime divisors of $m_E$) and $(m_2, m_E)=1$, we obtain
$$G_E(m)\simeq G_E(m_1)\times G_E(m_2)=G_E(m_1) \times \GL_2(\ZZ/m_2\ZZ).$$ 
Using this fact, we may then write $c_{E,t}=r_{E,t}\cdot c_t$, where
$$r_{E, t}= \lim_{k\rightarrow \infty}\frac{{\rm rad}(m_E)^k\cdot |{G_E({\rm rad}(m_E)^k)}_t|}{| G_E({\rm rad}(m_E)^k)|} \prod_{\ell\mid m_E} \left ( \lim_{k\rightarrow \infty } \frac{\ell^k\cdot|{\GL_2(\ZZ/\ell^k\ZZ)}_t|}{| \GL_2(\ZZ/\ell^k\ZZ)|}\right )^{-1},$$ 
and 
\begin{equation}
\label{uc}
c_t=\frac{2}{\pi}\prod_{\ell}   \left ( \lim_{k\rightarrow \infty } \frac{\ell^k \cdot |{\GL_2(\ZZ/\ell^k\ZZ)}_t|}{| \GL_2(\ZZ/\ell^k\ZZ)|}\right ).\end{equation}
Here ${\rm rad}(m_E)$ denotes the product of prime divisors of $m_E$. In \cite[p. 34, Lemma 2]{LT} it is shown that 
$$\lim_{k\rightarrow \infty}\frac{{\rm rad}(m_E)^k\cdot |{G_E({\rm rad}(m_E)^k)}_t|}{| G_E({\rm rad}(m_E)^k)|}= \frac{m_E\cdot |{G_E(m_E)}_t|}{| G_E(m_E)|}.$$
Also from \cite[Theorem 4.1]{LT} we have that
$$c_t=\frac{2}{\pi}\prod_{\ell}    \frac{\ell \cdot |{\GL_2(\ZZ/\ell \ZZ)}_t|}{| \GL_2(\ZZ/\ell\ZZ)|}=\frac{2}{\pi}  \prod_{\ell\nmid t} \frac{\ell^3-\ell^2-\ell}{(\ell^2-1)(\ell-1)} \prod_{\ell \mid t} \frac{\ell^2}{\ell^2-1}.$$

The Lang-Trotter Conjecture has been studied extensively in the literature. 
The best known unconditional upper bound for $\pi_{E, t}(x)$ for $t=0$  is $ x^\frac34$,  obtained by  Elkies \cite{Elk} and Ram Murty, and is
$ x(\log\log x)^2/(\log x)^2$ for $t\neq 0$ (see \cite[Theorem 5.1]{KM} and \cite[Theorem 1.4]{TZ}). Under GRH, Zywina \cite{Z} has recently obtained an upper bound for  $\pi_{E, t}(x)$ of  size $x^\frac45/(\log x)^\frac35$ for $t\neq 0$, and  an upper bound of size $ x^\frac34/(\log x)^\frac12$ for $t= 0$. 
The Lang-Trotter Conjecture was first shown to hold on average over a family of elliptic curves in the case $t=0$ by Fouvry and Ram Murty \cite{FM}. This result was then extended to the case of non-zero integers by David and Pappalardi \cite{DP}. 

In \cite[Remark 2, p. 37]{LT} it is mentioned that, by employing a probabilistic model, one can state an analogous conjecture for two elliptic curves. To our knowledge an exact statement of this conjecture with a conjectural constant has not appeared in the literature. In fact the previous related work \cite{FM2} and \cite{ADJ} consider the Frobenius distribution for two elliptic curves on average over a family of pairs of elliptic curves. In Section \ref{section: prob}, inspired by the model  developed by Lang and Trotter in \cite{LT},  we employ a probabilistic model to propose an explicit conjecture on the distribution of primes with two given traces for two fixed elliptic curves. Here we describe the conjecture.

Let $E_1$ and $E_2$ be two elliptic curves defined over $\QQ$ without complex multiplication that are not isogenous over $\overline{\QQ}$. Let $$\rho_{E_1, E_2}: \Gal(\overline{\QQ}/\QQ) \rightarrow \prod_{\ell}\GL_2(\ZZ_\ell) \times \GL_2(\ZZ_\ell) $$ be the Galois representation associated to $E_1$ and $E_2$ and their $\ell$-torsion points. For an integer $m>1$, let $$\phi_m :  \prod_\ell \GL_2(\ZZ_\ell) \times \GL_2(\ZZ_\ell)\rightarrow \GL_2(\ZZ/m\ZZ) \times \GL_2(\ZZ/m\ZZ)  $$ be the natural projection map. We denote the image of $\rho_{E_1, E_2, m}=\phi_m\circ\rho_{E_1, E_2}$ by $G_{E_1, E_2}(m)$. As a consequence of the Weil paring, we know that 
for $\sigma\in \Gal(\overline{\QQ}/\QQ)$ and an $m$-th root of unity $\zeta_m$, we have
$$\sigma(\zeta_m)=\zeta_m^{{\rm det}(\rho_{E_1,m})} = \zeta_m^{{\rm det}(\rho_{E_2, m})}$$
(see \cite[III.8]{Sil}). Thus,  $G_{E_1, E_2}(m)\subseteq \Delta(\ZZ/m\ZZ),$
where $$\Delta(\ZZ/m\ZZ):=\{ (g_1, g_2)\in \GL_2(\ZZ/m\ZZ)\times \GL_2(\ZZ/m\ZZ); ~\det( g_1)=\det (g_2)\}.$$
Furthermore, we  set 
$$G_{E_1, E_2}(m)_{t_1, t_2}:=\{(g_1, g_2)\in G_{E_1, E_2}(m);~ \tr(g_1)=t_1\text{ and }\tr(g_2)=t_2\}.$$

By developing a model similar to \cite[p. 29--32]{LT}, we propose the following conjecture.
\begin{conj} \label{conj: langtrot 2} 
Let $E_1$ and $E_2$ be two non $\overline{\QQ}$-isogenous elliptic curves defined over $\QQ$ with conductors $N_1$ and $N_2$ respectively and without complex multiplication. Then, for fixed integers $t_1$ and $t_2$, we have that
	\begin{equation*}
	\pi_{E_1,E_2, t_1,t_2}(x) :=\#\{p \leq x;~p\nmid N_1N_2,~ a_p(E_1)=t_1, \eand a_p(E_2)=t_2 \}\sim c_{E_1,E_2, t_1,t_2} \log\log x,
	\end{equation*}
as $x\rightarrow\infty$,  where $$c_{E_1, E_2, t_1, t_2}= \frac{1}{\pi^2} \lim_{m \widetilde{\rightarrow} \infty}  \frac{m^2\cdot | {G_{E_1, E_2}(m)}_{t_1, t_2}|}{| G_{E_1, E_2}(m)|}.$$\end{conj}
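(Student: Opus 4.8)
Since the final statement is a conjecture obtained from a heuristic, the plan is to build the probabilistic model that produces the predicted asymptotic, following the template of \cite{LT}. For each prime $p\nmid N_1N_2$ I would model the event $\{a_p(E_1)=t_1,\ a_p(E_2)=t_2\}$ by splitting the Frobenius data at $p$ into a congruence part and an archimedean part, assumed independent. For the congruence part one works over the division field $\QQ(E_1[m],E_2[m])$, whose Galois group is $G_{E_1,E_2}(m)\subseteq\Delta(\ZZ/m\ZZ)$ (the inclusion forced by the Weil pairing, since $\det\rho_{E_1,m}=\det\rho_{E_2,m}$), reads the residue pair $(a_p(E_1),a_p(E_2))\bmod m$ off the Frobenius conjugacy class at $p$, and applies the Chebotarev density theorem: the density of primes with $(a_p(E_1),a_p(E_2))\equiv(t_1,t_2)\pmod{m}$ equals $|{G_{E_1,E_2}(m)}_{t_1,t_2}|/|G_{E_1,E_2}(m)|$, which the model reads quasi-probabilistically as a single-prime probability.

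For the archimedean part one uses the joint Sato-Tate law for $(E_1,E_2)$: since both curves are non-CM and not $\overline{\QQ}$-isogenous, their joint Sato-Tate group is $SU(2)\times SU(2)$, so the normalized Frobenius angles $(\theta_{p,1},\theta_{p,2})$ follow the product Sato-Tate measure. One point of consistency to check here is that the determinant constraint responsible for $G_{E_1,E_2}(m)\subseteq\Delta$ is automatic on $SU(2)$ and therefore does not disturb the independence of the two traces on the archimedean side. Fixing $t_1,t_2$ and letting $p\to\infty$: conditioned on a residue class modulo $m$, the pair $(a_p(E_1),a_p(E_2))$ is a point of $(t_1,t_2)+(m\ZZ)^2$ inside the Hasse box $(-2\sqrt{p},2\sqrt{p})^2$, and the model assigns it the Sato-Tate density at the centre, $(2/\pi)^2$, times the area $(m/2\sqrt{p})^2$ of a fundamental cell of the rescaled sublattice, i.e.\ $m^2/(\pi^2p)$.

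Multiplying the two parts and letting $m\,\widetilde{\rightarrow}\,\infty$ to absorb all congruence information gives
\[
\Pr\bigl[a_p(E_1)=t_1,\ a_p(E_2)=t_2\bigr]\ \approx\ \frac{1}{\pi^2p}\ \lim_{m\,\widetilde{\rightarrow}\,\infty}\frac{m^2\,|{G_{E_1,E_2}(m)}_{t_1,t_2}|}{|G_{E_1,E_2}(m)|},
\]
and summing over $p\le x$ (the finitely many bad primes are harmless) together with Mertens' estimate $\sum_{p\le x}1/p\sim\log\log x$ produces $\pi_{E_1,E_2,t_1,t_2}(x)\sim c_{E_1,E_2,t_1,t_2}\log\log x$ with $c_{E_1,E_2,t_1,t_2}$ exactly as stated. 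The order $\log\log x$, rather than $\sqrt{x}/\log x$ as for one curve, comes precisely from the two square-root savings $p^{-1/2}\cdot p^{-1/2}=p^{-1}$ in the archimedean factor, one per curve; and the model also transparently accommodates the degenerate cases where $c_{E_1,E_2,t_1,t_2}=0$ (e.g.\ a forced parity of some $a_p(E_i)$), which are already visible in ${G_{E_1,E_2}(m)}_{t_1,t_2}$.

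Two obstacles remain, of different flavours. First, as for the original Lang-Trotter conjecture, the independence assumptions built into the model --- between the archimedean and congruence data, across the primes $\ell$, and in treating Chebotarev densities as single-prime probabilities --- cannot be proved, so only this heuristic derivation is available. Second, and this is the genuinely new feature in the two-curve setting, one must show that $\lim_{m\,\widetilde{\rightarrow}\,\infty}m^2|{G_{E_1,E_2}(m)}_{t_1,t_2}|/|G_{E_1,E_2}(m)|$ exists and is a convergent Euler product over $\ell$ of $\ell$-adic limits $\lim_{k\to\infty}\ell^{2k}|{G_{E_1,E_2}(\ell^k)}_{t_1,t_2}|/|G_{E_1,E_2}(\ell^k)|$; this needs a careful study of the groups $G_{E_1,E_2}(\ell^k)$ (and, for the universal component of the constant, of the distribution of trace pairs on $\Delta(\ZZ/\ell^k\ZZ)$), and --- unlike the single-curve case, where the $\ell$-adic limit already stabilizes at $k=1$ --- here these limits, even for the universal factor, need not stabilize, so establishing their existence and the convergence of the product, with the customary extra care at $\ell=2$ and the primes ramified in the image, is the main technical work in making the predicted constant well defined.
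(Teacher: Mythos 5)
Your proposal follows essentially the same route as the paper: Section \ref{section: prob} builds exactly this model, taking $\mu_{(p,{\bf E},m)}$ on the Hasse box to be (a normalization of) the product of the two-dimensional Sato--Tate density $f_\infty$ and the Chebotarev factor $m^2|G_{{\bf E}}(m)_{{\bf u}_p}|/|G_{{\bf E}}(m)|$, which at ${\bf u}_p={\bf t}$ gives the single-prime probability $\sim \frac{1}{\pi^2 p}\lim_{m\widetilde\rightarrow\infty} m^2|G_{{\bf E}}(m)_{{\bf t}}|/|G_{{\bf E}}(m)|$, and then sums over $p$ to get $\log\log x$. The only presentational difference is that the paper replaces your direct summation of expectations by an application of Kolmogorov's strong law of large numbers (Proposition \ref{consequence}), so that the asymptotic holds for almost all sequences in the product space rather than merely in expectation.
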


\begin{remarks}
{\em (i) If $E_1$ and $E_2$ are $\overline{\QQ}$-isogenous, then for $p\nmid N_1 N_2$, we have $a_p(E_1)=a_p(E_2)$, thus
$$\pi_{E_1, E_2, t_1, t_2}(x)=\pi_{E_1, t_1}(x)+O_{E_2}(1)= \pi_{E_2, t_2}(x)+O_{E_1}(1).$$
Thus Conjecture \ref{conj: langtrot sing} predicts the behaviour of $\pi_{E_1, E_2, t_1, t_2}(x)$ in this case. }

{\em (ii) The constant $c_{E_1, E_2, t_1, t_2}$ can be zero. For example, for $i=1,2$, let $E_i: y^2=(x-x_{i,1})(x-x_{i,2})(x-x_{i,3})$, where $x_{i,1}$, $x_{i,2}$, and $x_{i,3}$ are distinct integers. Then $c_{E_1, E_2, t_1, t_2}=0$ if $t_1$ and $t_2$ are odd.} 

{\em (iii) One should be able to formulate a similar conjecture if $E_i$ (for $i=1$ or $2$) has complex multiplication, as long as $t_i\neq 0$. It is known that the Lang-Trotter conjecture for CM curves in certain cases is equivalent to the conjecture on the distribution of  primes generated by certain quadratic polynomials. For example for the curve $E_1: y^2=x^3-x$, we have, for $p\neq 2$,  that if $a_p(E_1)=2$, then $p=n^2+1$ for some integer $n$ (see \cite[p. 307, Theorem 5]{IR}). Similarly for the curve $E_2: y^2=x^3+1$, we have, for $p\neq 2, 3$,  that if $a_p(E_2)=1$, then $p=3n^2+3n+1$ for some integer $n$ (see \cite[p. 305, Theorem 4]{IR}).  Thus the corresponding conjecture for $\pi_{E_1, E_2, 2, 1}(x)$ is related to the distribution of primes generated simultaneously by the polynomials $n^2+1$ and $3n^2+3n+1$.
}

{\em (iv) Due to the slow growth of the double logarithm function, unlike the Lang-Trotter conjecture for one elliptic curve, obtaining substantial experimental evidence for this conjecture is outside the realm of current computational power. For example, for two pairs of elliptic curves studied in \cite{LT}, Lang and Trotter report only one coincidence of supersingular primes among the first $5000$ primes (see \cite[p. 38]{LT}).}

\end{remarks}

The conjectural constant $c_{E_1, E_2, t_1, t_2}$ in Conjecture \ref{conj: langtrot 2} is the focus of this paper. We first observe that, by a theorem of Serre \cite[Th\'eor\`eme 6]{S} and by the work of Faltings  on the Tate Conjecture \cite{F}, there exists an analogue of Serre's open image theorem for two non-CM elliptic curves $E_1$ and $E_2$ that are not $\overline{\mathbb{Q}}$-isogenous (see also \cite[p. 3383]{J2}). Thus following the steps described above for the constant in the Lang-Trotter conjecture, we can establish the existence of a positive integer $m_{E_1, E_2}$ such that 
$c_{E_1, E_2, t_1, t_2}=r_{E_1, E_2, t_1, t_2}\cdot c_{t_1, t_2}$, where
$$r_{E_1, E_2, t_1, t_2}=\lim_{k\rightarrow \infty}\frac{{\rm rad}(m_{E_1, E_2})^{2k}\cdot |{G_{E_1, E_2}({\rm rad}(m_{E_1, E_2})^k)}_{t_1, t_2}|}{| G_{E_1, E_2}({\rm rad}(m_{E_1, E_2})^k)|}\prod_{\ell\mid \mee} \left ( \lim_{k\rightarrow \infty } \frac{\ell^{2k} \cdot |{\Delta(\ZZ/\ell^k\ZZ)}_{t_1, t_2}|}{|\Delta(\ZZ/\ell^k\ZZ)|}\right )^{-1},$$ 
and 
\begin{equation}
\label{defn: ctonettwo delta}
c_{t_1, t_2}=\frac{1}{\pi^2}\prod_{\ell}   \left ( \lim_{k\rightarrow \infty } \frac{\ell^{2k} \cdot |{\Delta(\ZZ/\ell^k\ZZ)}_{t_1, t_2}|}{| \Delta(\ZZ/\ell^k\ZZ)|}\right ).
\end{equation}
Here, $\Delta(\ZZ/\ell^k\ZZ)_{t_1, t_2}$ is the collection of $(g_1, g_2)\in \Delta(\ZZ/\ell^k\ZZ)$ with ${\rm tr}(g_1)=t_1$ and ${\rm tr}(g_2)=t_2$.

In this paper we propose explicit representations as rational functions of $\ell$ for the universal constant $c_{t_1, t_2}$ for different values of $t_1$ and $t_2$. We point out that, unlike the computation of $c_t$ for a single elliptic curve, determining explicit formulas for $c_{t_1, t_2}$ is an intricate problem that involves combinatorial computations in the ring of matrices with entries in $\ZZ/\ell^k\ZZ$. We denote the set of two by two matrices with entries in this ring by ${\rm M}_2(\ZZ/\ell^k \ZZ)$ and  for $t, u\in \mathbb{Z}/\ell^k\ZZ$ we define
\begin{equation}
\label{def: mtplk}
\mt:= \#\{A\in {\rm M}_2(\ZZ/\ell^k \ZZ);~ \tr(A)=t~ \text{ and } \det(A)=u\}.
\end{equation}

Our first result gives a representation of the universal constant in terms of specific matrix counts.
\begin{thrm}\label{thrm: ctonetwo}
For $t_1, t_2\in \ZZ$, set
\begin{equation}
\label{def-st}
\Sts:=\sum_{u\in \zstar } \mtone \mttwo.
\end{equation}
Then 
$$\lim_{k\rightarrow \infty } \frac{\Sts}{\ell^{5k-5}}$$
exists and  the constant $c_{t_1, t_2}$ is given by the following convergent  Euler product:
$$c_{t_1, t_2}=\frac{1}{\pi^2}\prod_{\ell}   \left (\frac{1}{(\ell-1)^3(\ell+1)^2} \lim_{k\rightarrow \infty } \frac{\Sts}{\ell^{5k-5}}\right ).$$\end{thrm}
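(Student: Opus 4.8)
The strategy is to evaluate the $\ell$-adic limit in \eqref{defn: ctonettwo delta} directly by unwinding the definition of $\Delta(\ZZ/\ell^k\ZZ)_{t_1,t_2}$ in terms of the matrix counts $m(t,u;\ell^k)$, and then matching the resulting expression with $S(t_1,t_2;\ell^k)$. First I would note the elementary identities
\begin{equation*}
|\Delta(\ZZ/\ell^k\ZZ)|=\sum_{u\in\zstar}|\GL_2(\ZZ/\ell^k\ZZ)_{\det=u}|^2,\qquad
|\Delta(\ZZ/\ell^k\ZZ)_{t_1,t_2}|=\sum_{u\in\zstar}m^*(t_1,u;\ell^k)\,m^*(t_2,u;\ell^k),
\end{equation*}
where $m^*(t,u;\ell^k)$ counts \emph{invertible} matrices of trace $t$ and determinant $u$; but since $u$ ranges over units, every matrix of determinant $u$ is automatically invertible, so $m^*(t,u;\ell^k)=\mt$ for $u\in\zstar$, giving $|\Delta(\ZZ/\ell^k\ZZ)_{t_1,t_2}|=\Sts$ exactly. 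For the denominator, a standard count gives $|\GL_2(\ZZ/\ell^k\ZZ)_{\det=u}|=|\GL_2(\ZZ/\ell^k\ZZ)|/|\zstar|=\ell^{4k}(1-\ell^{-1})(1-\ell^{-2})/(\ell^{k-1}(\ell-1))=\ell^{3k-1}(1-\ell^{-2})$ for each fixed unit $u$, so summing the square over the $\ell^{k-1}(\ell-1)$ units yields $|\Delta(\ZZ/\ell^k\ZZ)|=\ell^{k-1}(\ell-1)\cdot\ell^{6k-2}(1-\ell^{-2})^2=\ell^{7k-3}(\ell-1)(1-\ell^{-2})^2$. Plugging these into $\ell^{2k}|\Delta(\ZZ/\ell^k\ZZ)_{t_1,t_2}|/|\Delta(\ZZ/\ell^k\ZZ)|$ and simplifying the powers of $\ell$, the factor $\ell^{2k}/\ell^{7k-3}=\ell^{-5k+3}$ combines with $\Sts$ to produce $\Sts/\ell^{5k-3}$, and the remaining constant $(\ell-1)(1-\ell^{-2})^2$ in the denominator rearranges to $(\ell-1)^3(\ell+1)^2/\ell^4$ (since $(1-\ell^{-2})^2=(\ell^2-1)^2/\ell^4=(\ell-1)^2(\ell+1)^2/\ell^4$), contributing an extra $\ell^{-4}$; tracking this carefully converts $\Sts/\ell^{5k-3}$ into $\Sts/\ell^{5k-5}$ times the reciprocal of $(\ell-1)^3(\ell+1)^2$, which is exactly the claimed form.

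Once the algebraic identity is in place, it remains to show the limit $\lim_{k\to\infty}\Sts/\ell^{5k-5}$ exists and that the resulting Euler product over $\ell$ converges. For existence of the $\ell$-adic limit, the plan is to analyze $\mt$ for $u\in\zstar$ via the theory of conjugacy classes in $\mtwolk$: a matrix with unit determinant and trace $t$ has characteristic polynomial $X^2-tX+u$, and the count $\mt$ is governed by whether the discriminant $D=t^2-4u$ is a unit, and if not, by the $\ell$-adic valuation $\delta=v_\ell(D)$, together with a Kronecker symbol condition of the type already encoded in the macros $\krondtu$. For fixed $t_1,t_2$, as $k$ grows the contributions stabilize except possibly for finitely many "bad" residues $u$ near which $D(t_i,u)$ has large valuation; one shows these bad terms contribute an amount that is $o(\ell^{5k-5})$ (or stabilizes), so the normalized sum converges. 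This is precisely the kind of Hurwitz–Kronecker class number bookkeeping flagged in the abstract, and it is where the phenomenon that the $\ell$-factors "do not stabilize" in certain cases will surface — though for the \emph{existence} of the limit (as opposed to its being eventually constant) this is not an obstruction.

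The main obstacle I anticipate is not the denominator computation (routine) but establishing uniformity in $\ell$ good enough to conclude convergence of the Euler product: one needs an estimate of the shape
\begin{equation*}
\frac{1}{(\ell-1)^3(\ell+1)^2}\lim_{k\to\infty}\frac{\Sts}{\ell^{5k-5}}=1+O(\ell^{-1-\epsilon})
\end{equation*}
for all but finitely many $\ell$ (those not dividing $t_1,t_2$ or $t_1^2-4u$, $t_2^2-4u$ generically), so that $\prod_\ell$ converges. This requires a clean asymptotic for $\Sts$: heuristically $\mt\approx \ell^{3k}\cdot\ell^{-k}=\ell^{2k}$ on average over $u$ (the trace and determinant conditions each cut a factor $\ell^{-k}$ from $\ell^{4k}$), so $\Sts\approx |\zstar|\cdot\ell^{4k}=\ell^{k-1}(\ell-1)\ell^{4k}=\ell^{5k-1}(\ell-1)$, which matches the normalization $\ell^{5k-5}$ up to the factor $\ell^4(\ell-1)\approx(\ell-1)^3(\ell+1)^2$ modulo lower-order corrections. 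Making the error term in this heuristic explicit and uniform in $\ell$ — controlling the secondary terms coming from non-unit discriminants via the square-root cancellation inherent in the Kronecker symbol — is the technical heart of the argument, and I would defer the precise evaluation of $\lim_{k\to\infty}\Sts/\ell^{5k-5}$ as an explicit rational function of $\ell$ to the subsequent sections where $\mt$ is computed case by case.
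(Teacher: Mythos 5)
Your proposal follows essentially the same route as the paper: the same reduction of $\ell^{2k}|\Delta(\ZZ/\ell^k\ZZ)_{t_1,t_2}|/|\Delta(\ZZ/\ell^k\ZZ)|$ to $S(t_1,t_2;\ell^k)/\big((\ell-1)^3(\ell+1)^2\ell^{5k-5}\big)$, the same discriminant-valuation argument for existence of the $k$-limit (which the paper makes precise as a Cauchy-sequence estimate using $m(t,u;\ell^k)=\ell^{2k}\big(1+\sum_j (N_D(\ell^j)-N_D(\ell^{j-1}))/\ell^j\big)$ and the bound $N_D(\ell^j)\ll\ell^{j/2}$), and the same deferral of the uniform bound on the $\ell$-factor needed for convergence of the Euler product (the paper obtains $\Delta_\ell\ll\ell^{-3/2}$ from the David--Koukoulopoulos--Smith framework in Section 6). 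One small correction: the fiber count should be $\#\{g:\det g=u\}=\ell^{3k}(1-\ell^{-2})=\ell^{3k-2}(\ell^2-1)$, not $\ell^{3k-1}(1-\ell^{-2})$; with that fixed your power-of-$\ell$ bookkeeping lands on $\ell^{5k-5}$ exactly as claimed, whereas as written your intermediate exponents are off by $\ell^2$.
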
  

We set the following notational conventions that will be used throughout the paper. For a fixed prime $\ell$ and integers $t_1, t_2$, we define 
\begin{equation}
\label{defnofsk}
\Sk:=\frac{\Sts}{\ell^{5k-5}}=\frac{1}{\ell^{5k-5}}\sum_{u\in \zstar } \mtone \mttwo.
\end{equation}
We say that $\Sk$ \textit{stabilizes at $k_0$} if for all $k\geq k_0$ we have that $\Sk=\mathcal{S}_{k_0}$. In addition, if $t_1=t_2=t$, we define $\Stt:=S(t,t; \ell^k)$. Theorem \ref{thrm: ch-thrm-five} provides a formula for $\mt$ where the only dependence on $t$ comes from $D(t,u):=t^2-4u$. Since $D(t,u)=D(-t,u)$ we have that $\mt=m(-t,u;\ell^k)$ and hence 
\begin{equation}
\label{t-minus-t}
S(\pm t_1, \pm t_2;\ell^k)= S(t_1,t_2;\ell^k).
\end{equation}

The following theorem establishes the value of $\lim_{k\rightarrow \infty} \Sk$ in the case $t_1=\pm t_2$. 

\begin{thrm}\label{samet}
	Let $\ell$ be a prime and $t$ be an integer. Then for $k\geq 3$ we have that 
	\begin{equation*}
	\frac{\Stt}{\ell^{5k-5}}=\begin{cases}
	\ell^2(\ell^2+1)(\ell-1)&\eif\ell \mid t\eand \ell>2, \\
	 \frac{\ell^2(\ell^4-2\ell^2-3\ell-1)}{\ell+1} -\frac{\ell^4}{\ell^{2k}(\ell+1)}&\eif \ell \nmid 2t, \\ 
	4 &\eif \ell=2 \eand 2\nmid t,\\
	\frac{35}{2} &\eif \ell=2 \eand 4\mid t,\\
	\frac{103}{6}-\frac{32}{3\cdot 2^{2k}} &\eif \ell=2, 2\mid t, \eand 4\nmid t.
	\end{cases}
	\end{equation*}
\end{thrm}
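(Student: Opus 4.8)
The plan is to reduce the claim to a finite sum of elementary residue counts. Setting $t_1=t_2=t$ in \eqref{def-st} gives $\Stt=\sum_{u\in\zstar}\mt^2$, so it suffices to understand $\mt$ as $u$ ranges over the units modulo $\ell^k$ and then to group the terms. By Theorem~\ref{thrm: ch-thrm-five} the value of $\mt$ depends on $(t,u)$ only through the discriminant $D(t,u)=t^2-4u$, and in fact only through the pair consisting of $\delta:=\lfloor v_\ell(D(t,u))/2\rfloor$ (truncated once $2\delta\ge k$, i.e.\ once $D(t,u)\equiv 0\pmod{\ell^k}$) and the Kronecker symbol $\krondtu$. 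So the strategy is: (i) partition $\zstar$ into the finitely many classes on which $\mt$ is constant; (ii) compute the size of each class; (iii) multiply by the corresponding value of $\mt^2$, sum over classes, divide by $\ell^{5k-5}$, and simplify.

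Step (ii) is where the five cases of the theorem come from. Since $u\mapsto D(t,u)=t^2-4u$ is affine, for odd $\ell$ it carries $\zstar$ bijectively onto $\{D\in\ZZ/\ell^k\ZZ:\ D\not\equiv t^2\pmod\ell\}$; hence if $\ell\mid t$ then $D(t,u)$ runs over all units and only $\delta=0$ occurs, while if $\ell\nmid 2t$ then every stratum $0\le\delta\le\lfloor(k-1)/2\rfloor$ is populated, together with the single unit $u\equiv t^2/4$ for which $D(t,u)\equiv 0$. For $\ell=2$ the summand $4u$ is never a $2$-adic unit, so $v_2(D(t,u))$ is forced to equal $0$, to equal $2$, or to be $\ge 3$, according as $v_2(t)$ is $0$, is $\ge 2$, or is $1$; this gives the three $2$-adic cases, with the role of $\krondtu$ played by the residue of $D(t,u)/2^{v_2(D(t,u))}$ modulo $8$. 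Within each stratum, and within each of the sub-strata into which it splits according to the residue character, the number of admissible $u$ modulo $\ell^k$ is an explicit elementary function of $k$ of the form $c\,\ell^{k}$ plus lower-order terms, read off from a direct count of residues.

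Carrying out step (iii), the stratum of smallest discriminant valuation — which carries a proportion of the units that is bounded away from $0$ — supplies the leading rational function of $\ell$; one checks this directly from the split and inert values of $\mt$ (for example, when $\ell\mid t$ and $\ell>2$, exactly half of the units give $\mt=\ell^{2k-1}(\ell+1)$ and half give $\mt=\ell^{2k-1}(\ell-1)$, which after squaring, summing and normalising produces $\ell^2(\ell^2+1)(\ell-1)$). The remaining strata contribute a geometric-type tail summing, as $k\to\infty$, to a rational function of $\ell$, except that in the cases $\ell\nmid 2t$ and ($\ell=2$, $v_2(t)=1$) the topmost strata lie in the truncated regime of Theorem~\ref{thrm: ch-thrm-five} and, after dividing by $\ell^{5k-5}$, leave behind exactly the displayed residual terms $-\ell^4/(\ell^{2k}(\ell+1))$ and $-32/(3\cdot 2^{2k})$; thus in these two cases $\Sk$ genuinely fails to stabilise, in contrast with the other three. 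Convergence of $\Sk=\Stt/\ell^{5k-5}$ is already granted by Theorem~\ref{thrm: ctonetwo}; validity of the closed form for every $k\ge 3$ then follows once one checks that the stratum counts and the relevant values of $\mt$ have settled into their stable form by $k=3$, the lone $k$-dependent piece being the residual term just described. (For $k\le 2$ small numerical coincidences — already at $\ell=2$, $t=0$, $k=1$ — spoil the uniform formula.)

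The principal obstacle is the prime $\ell=2$. There quadratic characters are detected modulo $8$ rather than modulo the prime, so the dichotomy $\krondtu=\pm1$ must be replaced by a four-way classification of $D(t,u)/2^{v_2(D(t,u))}$ modulo $8$ (split, inert, and two ramified residues); and when $v_2(t)=1$ the forced inequality $v_2(D(t,u))\ge 3$ drives all of these into the degenerate ``almost-split'' regime of Theorem~\ref{thrm: ch-thrm-five}, in which a single characteristic polynomial meets several conjugacy classes of $\mtwolk$ and the formula for $\mt$ is correspondingly intricate; extracting the exact coefficient $-32/(3\cdot 2^{2k})$ means pushing this bookkeeping all the way through the boundary of that formula. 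A secondary difficulty, present for every $\ell$, is to reconcile the edge strata (those with $2\delta$ close to $k$) with the boundary cases of the formula for $\mt$: these strata are irrelevant to the leading term but carry the entire $\ell^{-2k}$ correction, so a miscount there would be invisible in the limit yet fatal to the exact statement.
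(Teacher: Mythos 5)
Your proposal is correct and follows essentially the same route as the paper: substitute the Gekeler--Castryck--Hubrechts formula (Theorem \ref{thrm: ch-thrm-five}), stratify $u\in\zstar$ by $\nu_{\ell,k}(D(t,u))$ together with the Legendre symbol (respectively the residue of $D(t,u)/2^{\nu}$ modulo $8$ when $\ell=2$), count each stratum, and sum the squared values --- precisely the content of Lemmas \ref{kroncount}--\ref{lemma: sltk results}, including your observation that the $2$-adic trichotomy $\nu_2(D)\in\{0\},\{2\},\{\ge 3\}$ is governed by $\nu_2(t)$ and that the non-stabilizing $\ell^{-2k}$ corrections arise from the truncated top strata and incomplete geometric tails. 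The only part you leave implicit, the final algebraic simplification of the resulting finite sums, is likewise delegated in the paper to a direct (or symbolic) computation.
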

We have that $\mathcal{S}_k$ stabilizes at $3$ if $\ell=2$ and $t$ is a multiple of $4$. If $\ell \mid t$ for an odd prime $\ell$, or $\ell=2$ and $t$ is odd, then, in Section \ref{section: ltcomps}, we show that $\mathcal{S}_k$ also stabilizes at $1$. 

The following result, which gives exact values of  $c_{t, t}$ and $c_{t, -t}$, is an immediate consequence of Theorem \ref{thrm: ctonetwo}, Theorem \ref{samet}, and \eqref{t-minus-t}. 

\begin{cor}\label{cor: ct}
	For an integer $t$, we have
	
	\begin{equation}\label{ctequals}
	c_{t,t} = c_{t, -t}= \frac{1}{\pi^2}\prod_{\substack{\ell > 2 \\ \ell \nmid t}}\frac{\ell^2(\ell^4-2\ell^2-3\ell-1)}{(\ell^2-1)^3}\prod_{\substack{\ell >2\\ \ell \mid t}}\frac{\ell^2(\ell^2+1)}{(\ell^2-1)^2}\cdot\begin{cases} \frac{4}{9} &\eif 2\nmid t, \\ \frac{35}{18} &\eif 4\mid t, \\ \frac{103}{54} &\eif 2\mid t, 4\nmid t.\end{cases}
	\end{equation}
	
\end{cor}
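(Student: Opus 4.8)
The plan is to derive Corollary \ref{cor: ct} purely by assembling the $\ell$-local factors supplied by the preceding results and simplifying the Euler product. By Theorem \ref{thrm: ctonetwo} we have
\[
c_{t,t}=\frac{1}{\pi^2}\prod_{\ell}\left(\frac{1}{(\ell-1)^3(\ell+1)^2}\lim_{k\to\infty}\frac{\Stt}{\ell^{5k-5}}\right),
\]
and the equality $c_{t,t}=c_{t,-t}$ is immediate from \eqref{t-minus-t}, since $S(t,-t;\ell^k)=S(t,t;\ell^k)$ for every prime $\ell$ and every $k$. So it suffices to compute the $\ell$-factor
\[
F_\ell:=\frac{1}{(\ell-1)^3(\ell+1)^2}\lim_{k\to\infty}\frac{\Stt}{\ell^{5k-5}}
\]
for each prime $\ell$, split according to the five cases of Theorem \ref{samet}.

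First I would take $k\to\infty$ in each branch of Theorem \ref{samet}: the first branch ($\ell\mid t$, $\ell>2$) and the third and fourth branches ($\ell=2$) are already independent of $k$, so the limit is the stated constant; in the second branch ($\ell\nmid 2t$) the term $\ell^4/(\ell^{2k}(\ell+1))$ tends to $0$, leaving $\ell^2(\ell^4-2\ell^2-3\ell-1)/(\ell+1)$; and in the fifth branch ($\ell=2$, $2\mid t$, $4\nmid t$) the term $32/(3\cdot 2^{2k})$ tends to $0$, leaving $103/6$. Then I would divide by $(\ell-1)^3(\ell+1)^2$. For odd $\ell\nmid t$ this gives $\ell^2(\ell^4-2\ell^2-3\ell-1)/((\ell-1)^3(\ell+1)^3)=\ell^2(\ell^4-2\ell^2-3\ell-1)/(\ell^2-1)^3$. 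For odd $\ell\mid t$ it gives $\ell^2(\ell^2+1)(\ell-1)/((\ell-1)^3(\ell+1)^2)=\ell^2(\ell^2+1)/((\ell-1)^2(\ell+1)^2)=\ell^2(\ell^2+1)/(\ell^2-1)^2$. For $\ell=2$ the denominator $(\ell-1)^3(\ell+1)^2$ equals $9$, so the three remaining cases give $F_2=4/9$ when $2\nmid t$, $F_2=(35/2)/9=35/18$ when $4\mid t$, and $F_2=(103/6)/9=103/54$ when $2\mid t$ but $4\nmid t$. Collecting the odd-prime factors into the two products over $\ell>2$, $\ell\nmid t$ and $\ell>2$, $\ell\mid t$, and pulling the $\ell=2$ factor out as the case distinction on the right, yields exactly \eqref{ctequals}.

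The only genuine point requiring care is the convergence of the Euler product, so that the rearrangement above is legitimate; but this is already guaranteed by Theorem \ref{thrm: ctonetwo}, which asserts that the displayed Euler product for $c_{t_1,t_2}$ is convergent. One may double-check convergence directly: for $\ell>2$ with $\ell\nmid t$ the factor is $\ell^2(\ell^4-2\ell^2-3\ell-1)/(\ell^2-1)^3=1-\ell^{-1}+O(\ell^{-2})$, and $\sum_\ell \ell^{-1}$ diverges, which is consistent with the $\log\log x$ order of growth in Conjecture \ref{conj: langtrot 2} rather than causing a problem for the product itself — the product $\prod_\ell(1-\ell^{-1}+O(\ell^{-2}))$ converges to a nonzero value precisely because the $O(\ell^{-2})$ correction is summable and the leading $-\ell^{-1}$ terms are handled by the standard comparison with $\prod_\ell(1-\ell^{-1})\,\zeta(\ldots)$-type manipulations; in any case, for the purposes of this corollary we simply invoke Theorem \ref{thrm: ctonetwo}. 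Thus there is no real obstacle: the proof is a bookkeeping exercise of substituting the five values from Theorem \ref{samet}, passing to the limit, dividing by $(\ell-1)^3(\ell+1)^2$, and regrouping. The one spot where an arithmetic slip is easiest to make is the algebraic simplification $(\ell-1)^3(\ell+1)^3=(\ell^2-1)^3$ and $(\ell-1)^3(\ell+1)^2=(\ell^2-1)^2(\ell-1)$, so I would write those steps out explicitly to make the denominators in \eqref{ctequals} transparent.
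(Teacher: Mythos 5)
Your proof is correct and is exactly the paper's intended argument: the paper itself states the corollary as an immediate consequence of Theorem \ref{thrm: ctonetwo}, Theorem \ref{samet}, and \eqref{t-minus-t}, and your substitution, passage to the limit in $k$, division by $(\ell-1)^3(\ell+1)^2$, and regrouping reproduce \eqref{ctequals} with no gaps. One small correction to your convergence aside: for $\ell\nmid 2t$ the local factor $\ell^2(\ell^4-2\ell^2-3\ell-1)/(\ell^2-1)^3$ equals $1+\ell^{-2}+O(\ell^{-3})$, not $1-\ell^{-1}+O(\ell^{-2})$, so the product converges absolutely for the usual reason and the digression about the divergent $\sum_\ell \ell^{-1}$ is moot (the $\log\log x$ in the conjecture comes from $\sum_{p\le x}1/p$, not from the Euler product over $\ell$); in any case you correctly fall back on the convergence asserted in Theorem \ref{thrm: ctonetwo}.
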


\begin{remarks}\label{unstable}
{\em For $t=0$, the value $c_{0, 0}=35/96$ is rational and is the same value obtained by Fouvry and Murty in \cite{FM2} for the average Lang-Trotter conjecture for two elliptic curves over a family of elliptic curves. For $t\neq 0$, we can write 
$$c_{t, t}=c_{t, -t}=q_t \prod_{\ell} \frac{\ell^{4}-2\ell^{2}-3\ell-1}{(\ell^2-1)^2},$$  
where $q_t$ is a certain rational number depending on $t$  that can be explicitly written and the value of the Euler product is approximately  0.08789878383\ldots.}	
\end{remarks}

{The presence of cases dependent on $k$  in Theorem \ref{samet} is a new feature of the constant for the Lang-Trotter Conjecture for two elliptic curves. In all other examples in the literature for problems of this kind the analogous Euler factors in \eqref{defn: ctonettwo delta} always stabilize.} 	
{The question of  stabilization of the Euler factors  of $c_{t_1, t_2}$ is  intimately related to the smoothness of the $\ell$-adic analytic manifold   
$$\Delta(\ZZ_\ell)_{t_1, t_2}:=\{ (g_1, g_2)\in {\rm GL}_2 (\ZZ_\ell) \times  {\rm GL}_2 (\ZZ_\ell) ;~{\rm tr}(g_1)=t_1,~{\rm tr}(g_2)=t_2,~{\rm and}~{\rm det}(g_1)={\rm det}(g_2)\},$$
where $t_1, t_2\in \ZZ_\ell$. In fact the $\ell$-th factor of $c_{t, t}$ is an scaled multiple of the volume of  $\Delta(\ZZ_\ell)_{t, t}$.  More precisely, from a theorem of Oesterl\'{e} \cite[Th\'eor\`eme 2]{O} and \eqref{deltats}, we know that 
$${\rm Vol}(\Delta(\ZZ_\ell)_{t_1, t_2})=\lim_{k\rightarrow \infty} \frac{\Delta(\ZZ/\ell^k \ZZ)_{t_1, t_2}}{\ell^{5k}}=\lim_{k\rightarrow \infty} \frac{S(t_1, t_2; \ell^k)}{\ell^{5k}},$$
where the volume is with respect to a certain measure defined in \cite[p. 326]{O}.
Then Theorem \ref{samet} can be re-written as 
$${\rm Vol}(\Delta(\ZZ_\ell)_{t, t})=
\begin{cases}
	\frac{(\ell^2+1)(\ell-1)}{\ell^3}&\text{ if }\ell \mid t\text{ and } \ell>2, \\
	 \frac{\ell^4-2\ell^2-3\ell-1}{\ell^3(\ell+1)} 
	 &\text{ if } \ell \nmid 2t, \\ 
	\frac{1}{ 8}&\text{ if } \ell=2 \text{ and }2\nmid t,\\
	\frac{35}{64} &\text{ if } \ell=2 \text{ and } 4\mid t,\\
	\frac{103}{192}
	&\text{ if } \ell=2, 2\mid t \text{ and }4\nmid t.
	\end{cases}
$$

In view of a theorem of Serre \cite[Th\'eor\`eme 9]{S2}, if $\Delta(\ZZ_\ell)_{t_1, t_2}$ is a smooth $\ell$-adic analytic manifold, then $S(t_1, t_2; \ell^k)/\ell^{5k}$ is stable. A computation involving the Jacobian of equations defining $\Delta(\ZZ_\ell)_{t_1, t_2}$ reveals that  $\Delta(\ZZ_\ell)_{t_1, t_2}$  is smooth for $t_1\neq \pm t_2$. Thus, we conclude that   $S(t_1, t_2; \ell^k)/\ell^{5k}$ is stable for $t_1\neq \pm t_2$.      }

The proof of Theorem  \ref{samet} is done by employing formulas developed in \cite{G} and \cite{CH} for the function $\mt$ in \eqref{def: mtplk} on a case by case analysis. The proof, although straightforward, is tedious and breaks down into  thirteen separate cases (four when $\ell$ is odd and nine when $\ell=2$). One may consider a similar approach in studying $S(t_1, t_2; \ell^k)$ in the case $t_1\neq \pm t_2$. However, there are many more cases involved which makes the case by case analysis much more complicated. In certain cases, we have successfully applied this approach in the evaluation of the $\ell$-th factor 
$$c_\ell(t_1, t_2)= \frac{1}{(\ell-1)^3 (\ell+1)^2}\lim_{k\rightarrow \infty } \frac{\Sts}{\ell^{5k-5}}$$
in the Euler product of the constant 
$$c_{t_1, t_2}=\frac{1}{\pi^2} \prod_\ell c_\ell (t_1, t_2).$$

\begin{prop}
\label{proved-cases}
Let $t_1$, $t_2$, where $t_1\neq \pm t_2$, be two integers and let $\ell$ be a prime. For $\ell>2$, if $\ell\mid t_1t_2$ then we have 
\begin{equation*}
c_\ell(t_1,t_2)=
	\frac{\ell^2\bigg(\ell^3-\ell^2+\Big(1-2 \left(\tfrac{(t_1,t_2)}{\ell}\right)^2\Big)\ell-1\bigg)}{(\ell-1)^3(\ell+1)^2},
	\end{equation*}
	where $(\tfrac{\cdot}{\ell})$ is the Legendre symbol. For $\ell=2$, we have
	\begin{align*}
c_2(t_1,t_2)&=\begin{cases}
\tfrac 49 & \eif 2 \nmid t_1t_2, \\ 
\tfrac 8 9 & \eif 2\mid t_1t_2\eand 2 \nmid (t_1,t_2),\\
\tfrac{33}{18} & \eif 4\mid (t_1,t_2) \eand t_1^2\not\equiv t_2^2 \imod{16},\\
\tfrac{35}{18} & \eif 4\mid (t_1,t_2) \eand t_1^2\equiv t_2^2 \imod{16}.
\end{cases}
\end{align*}

\end{prop}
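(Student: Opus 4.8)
The plan is to compute $\Sts=\sum_{u\in\zstar}\mtone\mttwo$ directly from the closed formula for $\mt$ furnished by Theorem~\ref{thrm: ch-thrm-five}, using that (as recorded there) $\mt$ depends on the pair $(t,u)$ only through the $\ell$-adic valuation $v_\ell(D(t,u))$ of $D(t,u)=t^2-4u$ and, when that valuation is even, through whether $D(t,u)/\ell^{v_\ell(D(t,u))}$ is a square modulo $\ell$. Accordingly, I would partition the summation range $\zstar$ into the cells on which $D(t_1,u)$ and $D(t_2,u)$ each have prescribed valuation and quadratic type, substitute the formula for $\mt$ on each cell, sum, and then pass to the limit $k\to\infty$; because $\Sts=S(t_2,t_1;\ell^k)$ is symmetric in $t_1$ and $t_2$, in each case below I may assume that $\ell\mid t_1$ (and, when $\ell=2$, that $t_1$ is the even argument). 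The value of $c_\ell(t_1,t_2)$ is then $\lim_{k\to\infty}\Sts/\ell^{5k-5}$ divided by $(\ell-1)^3(\ell+1)^2$.

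For an odd prime $\ell$ with $\ell\mid t_1$, the crucial simplification is that $v_\ell(D(t_1,u))=0$ for \emph{every} $u\in\zstar$, with $D(t_1,u)\equiv-4u\pmod\ell$; hence by Theorem~\ref{thrm: ch-thrm-five} we have $\mtone=\ell^{2k-1}\bigl(\ell+\bigl(\tfrac{-u}{\ell}\bigr)\bigr)$, which assumes only the two values $\ell^{2k-1}(\ell\pm1)$. If also $\ell\mid t_2$, then $D(t_2,u)\equiv-4u\equiv D(t_1,u)\pmod\ell$, so $\mttwo=\mtone$ for all $u$ and therefore $\Sts=\sum_{u\in\zstar}\mtone^2=\Stt$ for any $t$ divisible by $\ell$; the claimed value of $c_\ell(t_1,t_2)$ then follows immediately from Theorem~\ref{samet} (and agrees with the stated formula, in which $\bigl(\tfrac{(t_1,t_2)}{\ell}\bigr)=0$ here). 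If instead $\ell\nmid t_2$, then
\[
\Sts=\ell^{2k-1}\left(\ell\sum_{u\in\zstar}\mttwo+\Bigl(\tfrac{-1}{\ell}\Bigr)\sum_{u\in\zstar}\Bigl(\tfrac{u}{\ell}\Bigr)\mttwo\right),
\]
where the first inner sum equals $|{\GL_2(\ZZ/\ell^k\ZZ)}_{t_2}|$, again read off from Theorem~\ref{thrm: ch-thrm-five}, and the second is a twisted sum that I would evaluate by re-indexing through $D=t_2^2-4u$ and inserting the appropriate regime of the formula for $\mttwo$; the contributions collapse to classical quadratic character sums over $\ZZ/\ell\ZZ$. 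Assembling the pieces and dividing by $\ell^{5k-5}$ yields a quantity that is independent of $k$ (consistent with the stability of $S(t_1,t_2;\ell^k)/\ell^{5k}$ for $t_1\neq\pm t_2$ noted earlier) and equal to $\ell^2(\ell^3-\ell^2-\ell-1)$; since $\bigl(\tfrac{(t_1,t_2)}{\ell}\bigr)^2=1$ when $\ell\nmid(t_1,t_2)$, this matches the claimed formula.

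For $\ell=2$ I would run the same scheme using the $\ell=2$ case of Theorem~\ref{thrm: ch-thrm-five}. The analogous simplification is that if $t$ and $u$ are both odd then $f_u(a):=a^2-ta+u=a(a-t)+u$ is a unit for every $a\in\ZZ/2^k\ZZ$ (one of $a$, $a-t$ is even), so $m(t,u;2^k)=2^{2k-1}$ for all $u\in\zstar$. Hence, if $t_1$ and $t_2$ are both odd, then $\mtone=\mttwo=2^{2k-1}$ for all $u$ and $\Sts=|\zstar|\,(2^{2k-1})^2=2^{5k-3}$, giving $c_2=4/9$; if exactly one of them, say $t_2$, is odd, then $m(t_2,u;2^k)=2^{2k-1}$ for all $u$ and $\Sts=2^{2k-1}\,|{\GL_2(\ZZ/2^k\ZZ)}_{t_1}|$, so it remains only to compute this group count for $t_1$ even; and if both are even, neither factor simplifies, and I would partition $\zstar$ according to $v_2(t_i^2-4u)$ and the class of $(t_i^2-4u)/2^{v_2(t_i^2-4u)}$ modulo $8$, applying the $\ell=2$ matrix-count formula on each piece. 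The conditions on $t_1^2$ and $t_2^2$ modulo $16$ in the statement are precisely what govern how $v_2(t_i^2-4u)$ is distributed as $u$ ranges over $\zstar$; in each admissible case the resulting sum, divided by $2^{5k-5}$, is eventually constant in $k$ and equals the stated numerator divided by $(\ell-1)^3(\ell+1)^2=9$.

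The conceptual content here is modest; the obstacle is bookkeeping. For $\ell>2$ the one genuinely laborious step is the evaluation of the twisted sum $\sum_{u\in\zstar}\bigl(\tfrac{u}{\ell}\bigr)\mttwo$: the formula for $m(t_2,u;\ell^k)$ has several regimes indexed by $v_\ell(t_2^2-4u)$, and one must assemble the contributions and evaluate the ensuing quadratic character sums without sign errors. For $\ell=2$ the hard part is the case in which $t_1$ and $t_2$ are both even, where the $2$-adic matrix count has the most sub-cases and the partition of $\zstar$ by the valuation and residue of $t_i^2-4u$ is the most delicate; this is also where the need to restrict to the pairs $(t_1,t_2)$ listed in the proposition, and the appearance of the conditions modulo $16$, become transparent. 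A minor point that should be checked in each case is that the relevant case-sum really does stabilize in $k$, which is consistent with — and could alternatively be deduced from — the stability of $S(t_1,t_2;\ell^k)/\ell^{5k}$ for $t_1\neq\pm t_2$.
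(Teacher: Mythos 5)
Your strategy is the same as the paper's: partition $\zstar$ by the valuation and quadratic type of $D(t_i,u)$, insert the closed formula of Theorem \ref{thrm: ch-thrm-five} on each cell, and sum; the paper's proof consists precisely of Lemmas \ref{lemma: ldivides t}, \ref{lemma: ldivides t_1}, and \ref{lemma: sltk results}, which do exactly this. Your reorganizations are sound and in places cleaner: reducing the case $\ell\mid(t_1,t_2)$ to Theorem \ref{samet}; writing $\mtone=\ell^{2k-1}\bigl(\ell+\bigl(\tfrac{-u}{\ell}\bigr)\bigr)$ so that the mixed case becomes one full trace count plus one character-twisted sum (the paper instead computes joint counts $f(j_1,j_2,n_1,n_2)$ with separate treatment of $\ell\equiv1$ and $\ell\equiv3\pmod4$); and observing $m(t,u;2^k)=2^{2k-1}$ for odd $t$, which collapses two of the $\ell=2$ cases to a single $\GL_2$ trace count. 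The weakness is that the two computations carrying the entire content of the proposition, namely the twisted sum $\sum_{u}\bigl(\tfrac{u}{\ell}\bigr)\mttwo$ and the $4\mid(t_1,t_2)$ case, are asserted rather than performed, so as written this is a plan rather than a proof. For what it is worth, your asserted value $\ell^2(\ell^3-\ell^2-\ell-1)$ in the mixed odd-$\ell$ case is correct and is what the proposition's formula requires (check $\ell=3$, $(t_1,t_2)=(3,1)$, $k=1$: $S=6\cdot9+12\cdot6=126=9\cdot14$); note that the paper's Lemma \ref{lemma: ldivides t_1} prints $\ell^2(\ell^2-1)(\ell-1)$ instead, which appears to be a typo there.

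One concrete warning about the $\ell=2$, $4\mid(t_1,t_2)$ case. When $4\mid t_1$ and $4\mid t_2$ the congruence $t_1^2\equiv t_2^2\pmod{16}$ holds automatically, so the proposition's third case is vacuous as literally stated and cannot be the right dichotomy. Moreover $\nu_2(t_i^2-4u)$ is identically $2$ here; what varies is $(t_i^2-4u)/4$ modulo $8$, and the two odd parts agree modulo $8$ for every $u$ precisely when $t_1^2\equiv t_2^2\pmod{32}$. That is the condition separating $35/18$ from $33/18$: for instance $(t_1,t_2)=(4,8)$ gives $S=33\cdot2^{5k-6}$, hence $33/18$, even though $t_1^2\equiv t_2^2\pmod{16}$. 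So when you execute your partition you should replace the mod-$16$ condition by mod $32$; your remark that the stated mod-$16$ conditions govern how $\nu_2(t_i^2-4u)$ is distributed is not accurate as it stands.
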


For the remaining cases, not covered in the above proposition, we followed a computational approach.
We implemented an algorithm in SAGE to compute $\Sts$ for various small primes $\ell$, integers $t_1,t_2$, and positive integers $k$ using Theorem \ref{thrm: ch-thrm-five}   and then used rational interpolation approximation in MAPLE to represent $S(t_1, t_2, \ell^k)$ as a rational  function of $\ell$ for various values of $t_1$ and $t_2$. In Section \ref{section: comp} we provide computational evidence for our conjectured values of $S(t_1, t_2; \ell^k)$.  
We state our findings as a conjecture.

We denote the $\ell$-adic valuation of an integer $m$ by $\nu_\ell(m)$.

\begin{conj} \label{conj: constant-distinct-ts} 	
Let $t_1$, $t_2$, where $t_1\neq \pm t_2$, be two integers and let $\ell$ be a prime. Set $$\alpha=\alpha(t_1,t_2,\ell):=\max\{\nu_\ell(t_1+t_2),\nu_\ell(t_1-t_2)\}.$$ For $\ell>2$, if  $\ell\nmid t_1 t_2$, then we have
	\begin{align*}
c_\ell(t_1,t_2)&= \begin{cases} \tfrac{\ell^2(\ell^3-\ell^2-2\ell-2)}{(\ell-1)^3(\ell+1)^2}&\eif \alpha=0, \\
\tfrac{\ell^2(\ell^4-2\ell^2-3\ell-1)-\ell^{2-2\alpha}(\ell^2+\ell+1)}{(\ell-1)^3(\ell+1)^3} & \eif \alpha\geq 1.\\
\end{cases}
\end{align*}
For $\ell=2$, we have
\begin{align*}
c_2(t_1,t_2)&=\begin{cases}
\tfrac{15}{9} & \eif  2\mid (t_1,t_2), 4\nmid (t_1,t_2), \eand \alpha=1,\\
\tfrac{103}{54} - \tfrac{7}{27\cdot2^{2\alpha-3}}& \eif 2\mid(t_1,t_2), 4\nmid (t_1,t_2), \eand \alpha\geq 3.\\
\end{cases}
\end{align*}
\end{conj}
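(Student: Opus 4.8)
A proof of Conjecture~\ref{conj: constant-distinct-ts} would proceed by the same strategy that establishes Theorem~\ref{samet} and Proposition~\ref{proved-cases}, now carried out in the complementary range of parameters: Proposition~\ref{proved-cases} already disposes of $\ell\mid t_1t_2$ for odd $\ell$ and of $2\nmid(t_1,t_2)$ for $\ell=2$, so that the conjecture, together with those results and Theorem~\ref{samet}, would pin down every Euler factor $c_\ell(t_1,t_2)$ when $t_1\neq\pm t_2$. First I would invoke the remark following Proposition~\ref{proved-cases}: since $\Delta(\ZZ_\ell)_{t_1,t_2}$ is a smooth $\ell$-adic manifold for $t_1\neq\pm t_2$, Serre's stabilization criterion guarantees that $S(t_1,t_2;\ell^k)/\ell^{5k}$, hence $\Sk$, stabilizes, so it is enough to identify the stable value. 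To do so, substitute the formula of Theorem~\ref{thrm: ch-thrm-five} for $\mt$ into the definition~\eqref{def-st} of $\Sts$. Because $\mt$ depends on $(t,u)$ only through $D(t,u)=t^2-4u$ — and in fact only through the truncated valuation $\nu_\ell(D(t,u))$ (capped at $k$) together with the quadratic symbol attached to the prime-to-$\ell$ part of $D(t,u)$ — the sum over $u\in\zstar$ reorganizes as a finite sum, indexed by the possible pairs of such data for $t_1$ and for $t_2$, of (the number of $u\in\zstar$ realizing a given pair of data) times (the two constant values of $m(t_1,u;\ell^k)$ and $m(t_2,u;\ell^k)$ on that stratum).

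The arithmetic crux is counting these strata, and this is exactly where $\alpha$ enters. Writing $\delta_i:=\nu_\ell(t_i^2-4u)$ and $\beta:=\nu_\ell(t_1^2-t_2^2)$, the identity
$$D(t_1,u)-D(t_2,u)=t_1^2-t_2^2=(t_1+t_2)(t_1-t_2)$$
together with the ultrametric inequality gives $\delta_1=\delta_2$ whenever $\min\{\delta_1,\delta_2\}<\beta$, and $\min\{\delta_1,\delta_2\}=\beta$ as soon as $\delta_1\neq\delta_2$; moreover $\delta_1=\delta_2=\delta$ forces $\delta\le\beta$. When $\ell$ is odd and $\ell\nmid t_1t_2$, exactly one of $\nu_\ell(t_1+t_2),\nu_\ell(t_1-t_2)$ vanishes, so $\beta=\max\{\nu_\ell(t_1+t_2),\nu_\ell(t_1-t_2)\}=\alpha$; this identifies $\alpha$ as the critical valuation at which the two local factors can decouple. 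Consequently $\Sts$ splits into a diagonal part over $\delta_1=\delta_2=\delta<\alpha$ — which reproduces the small-$\delta$ contribution to the single-curve sum $\Stt$ and supplies the $\alpha$-independent bulk of $c_\ell(t_1,t_2)$ — a boundary contribution from $\delta_1=\delta_2=\alpha$, and contributions from the finitely many off-diagonal strata, on which $\min\{\delta_1,\delta_2\}=\alpha$ and the larger valuation is unbounded. Each of these latter strata is cut out by a congruence $4u\equiv t_i^2\pmod{\ell^{\delta_i}}$ with prescribed leading $\ell$-adic digits, so is rarer by a factor of order $\ell^{-\alpha}$ than the corresponding $\delta=0$ stratum; combined with the scaling of the matrix counts this is what produces the $\ell^{2-2\alpha}$ corrections in the statement. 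Summing the resulting essentially geometric series in $\delta$ (with the $\pm1$ weights coming from the quadratic symbols on the even-$\delta$ strata), letting $k\to\infty$, and dividing by $(\ell-1)^3(\ell+1)^2$ should yield the claimed rational functions. A built-in check: the $\alpha\to\infty$ limit of each formula must recover the corresponding $c_\ell(t,t)$ of Theorem~\ref{samet}, since $t_1=t_2$ is the formal case $\alpha=\infty$; indeed for odd $\ell\nmid 2t$ the numerator $\ell^2(\ell^4-2\ell^2-3\ell-1)-\ell^{2-2\alpha}(\ell^2+\ell+1)$ tends to $\ell^2(\ell^4-2\ell^2-3\ell-1)$, matching the $\ell\nmid 2t$ line of Theorem~\ref{samet}.

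For $\ell=2$ one runs the same argument with the $\ell=2$ formula for $m(t,u;2^k)$, which distinguishes $D(t,u)$ according to its residue modulo $8$ (and $16$); there are more strata, and one must also keep track that $4u$, not $u$, is the natural unknown. Now $\beta=\nu_2(t_1+t_2)+\nu_2(t_1-t_2)$ strictly exceeds $\alpha$ in general, and part of the work is to verify that in the regime $2\mid(t_1,t_2)$, $4\nmid(t_1,t_2)$ only $\alpha=1$ (one of $t_1,t_2$ is $\equiv2$ and the other $\equiv0\pmod{4}$) and $\alpha\ge3$ (both $t_i\equiv2\pmod{4}$) can occur, so that the two displayed subcases are exhaustive. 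I expect the main obstacle to be entirely one of organisation rather than of ideas: as the authors note, for $t_1\neq\pm t_2$ the number of $(\delta_1,\delta_2)$-strata, and at $\ell=2$ the proliferation of conditions modulo $8$ and $16$, is considerably larger than in the $t_1=\pm t_2$ case of Theorem~\ref{samet}, which is why these formulas are recorded here only as a conjecture and were instead confirmed by the rational-interpolation experiments of Section~\ref{section: comp}; a complete proof would amount to carrying the stratification bookkeeping through every subcase without error.
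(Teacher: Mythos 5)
The statement you are addressing is stated in the paper as a \emph{conjecture}: the authors give no proof of it. Their evidence consists of the SAGE computations of $\Sk$ via Theorem \ref{thrm: ch-thrm-five}, rational interpolation in MAPLE, the numerical verifications recorded in Section \ref{section: comp} (Conjecture \ref{differentt} is the $\Sk$-level restatement), and the consistency checks that the $\alpha\to\infty$ limit recovers Corollary \ref{cor: ct} and that the remaining parameter ranges are settled by Proposition \ref{proved-cases}. So there is no paper proof to measure your argument against; the relevant question is whether what you wrote constitutes a proof on its own.

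It does not, and you essentially say so yourself. Your structural analysis is correct and matches the authors' own methodology: the ultrametric argument on $D(t_1,u)-D(t_2,u)=(t_1+t_2)(t_1-t_2)$ correctly identifies $\alpha$ as the valuation at which $\nu_\ell(D(t_1,u))$ and $\nu_\ell(D(t_2,u))$ can decouple (for odd $\ell\nmid t_1t_2$ one checks $\nu_\ell(t_1^2-t_2^2)=\alpha$ since at most one of $\nu_\ell(t_1\pm t_2)$ is positive); your verification that $\alpha=2$ is impossible when $2\mid(t_1,t_2)$ and $4\nmid(t_1,t_2)$ is right and shows the two $\ell=2$ subcases are exhaustive; and the $\alpha\to\infty$ check against Theorem \ref{samet} works. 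But the decisive step is missing: you never produce the analogue of Lemma \ref{kroncount} for two traces with $t_1\neq\pm t_2$, i.e.\ the exact cardinalities of the strata indexed by $(\delta_1,\delta_2)$ and the attached quadratic symbols (and, for $\ell=2$, the residues of $D(t_i,u)/2^{n_i}$ modulo $4$ and $8$), nor the closed-form summation against the values from Theorem \ref{thrm: ch-thrm-five}. Without that bookkeeping, none of the specific coefficients in the statement --- the $-2\ell-2$ versus $-3\ell-1$ in the numerators, the correction term $\ell^{2-2\alpha}(\ell^2+\ell+1)$, the values $15/9$ and $103/54-7/(27\cdot 2^{2\alpha-3})$ --- is actually derived. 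What you have is a credible plan for proving the conjecture (and a correct explanation of why the formulas take the shape they do), not a proof; completing it would require carrying out the stratified count in full, which is precisely the work the authors did not do and replaced by computation.
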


\begin{remarks}
{\em (i) A version of this conjecture in terms of the quantity
${\mathcal{S}}_k={\Sts}/{\ell^{5k-5}}$ is given in Conjecture \ref{differentt}. We note that for $t_1\neq \pm t_2$ the quotient $S(t_1, t_2; \ell^k)/\ell^{5k-5}$ stabilizes at $k=\alpha+1$ as $k
\rightarrow \infty$. }

{\em (ii) Observe that for $t_1=\pm t_2$ we have that $\alpha=\infty$. In this case Conjecture \ref{conj: constant-distinct-ts} together with Proposition \ref{proved-cases} imply 
Corollary \ref{cor: ct}.}
\end{remarks}

We next describe another interpretation of the $\ell$-factors in the universal constant for the Lang-Trotter conjecture for two elliptic curves. In order to do this, for an integer $t$ and primes $p$ and $\ell$, we first set

\begin{equation}
\label{equation: nuell}
f_\ell(t, p):= \lim_{k\rightarrow \infty} \frac{\ell^{k} \mtp}{\left|\rm{SL}_{2}(\ZZ/\ell^{k}\ZZ)\right|}=\lim_{k\rightarrow\infty}\frac{\mtp}{\ell^{2k-2}(\ell^2-1)},
\end{equation}
where $\mtp$ is defined in $\eqref{def: mtplk}$ and $\rm{SL}_{2}(\ZZ/\ell^{k}\ZZ)$ is the subgroup of matrices of determinant $1$ in $\rm{GL}_{2}(\ZZ/\ell^{k}\ZZ)$. Observe that for $\ell\neq p$, we have
$$f_\ell(t, p)=\lim_{k\rightarrow\infty} \frac{\ell^k\left|{\rm GL}_{2}^{(p)} (\ZZ/\ell^k\ZZ)_{t}\right|}{\left|{\rm GL}_{2}^{(p)} (\ZZ/\ell^k\ZZ)  \right|},$$
where the superscript $(p)$ means that the matrices $A\in{\rm GL}_{2}(\ZZ/\ell^k\ZZ)$ satisfy the extra condition ${\rm det}(A)=p$. Note that, for $\ell\neq p$, $f_\ell(t, p)$ is similar to the $\ell$-factor of the universal constant $c_t$ in \eqref{uc} with the imposed extra condition ${\rm det}(A)=p$.  Gekeler has shown that in \eqref{equation: nuell} the limit exists. More precisely,
\begin{equation} 
\label{fell}
f_\ell(t, p)= (1-\ell^{-2})^{-1} \cdot\begin{cases}1+\ell^{-1} &\text{ if }\left (\frac{(t^2-4p)/\ell^{2\delta}}{\ell} \right)=1, \\
1+\ell^{-1}-2\ell^{-\delta-1}&\text{ if }\left (\frac{(t^2-4p)/\ell^{2\delta}}{\ell} \right)=-1,\\
1+\ell^{-1} -(\ell+1) \ell^{-\delta-2}&\text{ if }\left (\frac{(t^2-4p)/\ell^{2\delta}}{\ell} \right)=0. \end{cases}
\end{equation} 
In \eqref{fell}, $\delta:=\delta(t, p)$ is the largest integer $i\geq 0$ such that $\ell^{2i} \mid t^2-4p$ for $\ell>2$, with the additional constraint in the case $\ell=2$ that $(t^2-4p)/2^{2i}\equiv 0$ or $1 \imod {4}$
 (see \cite[Corollary 4.6]{G}). The following proposition provides an interpretation of the $\ell$-factor of the universal constant $c_{t_1, t_2}$ as an average value of the sequence $(f_\ell(t_1, p)f_\ell(t_2, p))_p$ as $p$ varies over primes. 

\begin{prop}\label{prop: primesum}
Let $p$ be a prime and let $t_1, t_2\in \mathbb{Z}$. Let $\pi(x)$ denote the number of primes not exceeding $x$. Then  
\begin{equation}\label{sts limit}
	\lim_{x\rightarrow \infty} \frac{1}{\pi(x)} \sum_{p\leq x} f_\ell(t_1, p)f_\ell(t_2, p)=
	\frac{1}{(\ell-1)^3(\ell+1)^2} \lim_{k\rightarrow \infty} \frac{\Sts}{\ell^{5k-5}}.
\end{equation}
\end{prop}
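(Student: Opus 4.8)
The plan is to compute the left-hand average by splitting the sum over $p$ according to the residue class of $p$ modulo $\ell^k$, let $k\to\infty$, and match the result with the matrix count $\Sts$. First I would write $f_\ell(t_i,p)$ in terms of the finite-level quantity. Fix $k$ large. By \eqref{equation: nuell}, for $p$ in a fixed residue class $u \in \zstar$ we have $f_\ell(t_i,p) = \mtone(u)/(\ell^{2k-2}(\ell^2-1)) + o(1)$ as $k\to\infty$ (uniformly in $p$), where I am writing $m(t_i,u;\ell^k)$ for the matrix count with determinant $u$; indeed Gekeler's formula \eqref{fell} shows the limit depends only on $\delta(t_i,p)$ and a Legendre symbol, both of which are determined by $t_i^2-4p \pmod{\ell^{2k}}$ for $k$ large, hence only on $p \pmod{\ell^k}$. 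So for each fixed $k$,
\begin{equation*}
f_\ell(t_1,p)f_\ell(t_2,p) = \frac{\mtone \mttwo}{\ell^{4k-4}(\ell^2-1)^2} + E_k(p),
\end{equation*}
where $u \equiv p \pmod{\ell^k}$ and $E_k(p) \to 0$ as $k\to\infty$ uniformly in $p$ coprime to $\ell$.

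Next I would invoke the prime number theorem in arithmetic progressions (or Dirichlet's theorem with equidistribution) to evaluate the average of the main term. Since $(\ZZ/\ell^k\ZZ)^*$ has $\varphi(\ell^k) = \ell^{k-1}(\ell-1)$ residue classes, and the primes $p\leq x$ coprime to $\ell$ equidistribute among them,
\begin{equation*}
\lim_{x\to\infty}\frac{1}{\pi(x)}\sum_{p\leq x}\frac{\mtone\mttwo}{\ell^{4k-4}(\ell^2-1)^2} = \frac{1}{\ell^{k-1}(\ell-1)}\cdot\frac{1}{\ell^{4k-4}(\ell^2-1)^2}\sum_{u\in\zstar}\mtone\mttwo,
\end{equation*}
which, using $\eqref{def-st}$ and $(\ell^2-1)^2 = (\ell-1)^2(\ell+1)^2$, equals
\begin{equation*}
\frac{\Sts}{\ell^{5k-5}(\ell-1)^3(\ell+1)^2}.
\end{equation*}
By Theorem \ref{thrm: ctonetwo} the quantity $\Sts/\ell^{5k-5}$ converges as $k\to\infty$, so this expression has a limit in $k$, namely the right-hand side of \eqref{sts limit}.

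Finally I would let $k\to\infty$ to remove the error term. The subtlety is the interchange of the two limits: the left-hand side of \eqref{sts limit} is defined with $x\to\infty$ first, while the computation above does $x\to\infty$ for fixed $k$. One must control $E_k(p)$ uniformly. Here the key point is that $E_k(p)$ is genuinely bounded by a quantity $\varepsilon_k \to 0$ independent of $p$: from \eqref{fell}, each $f_\ell(t_i,p)$ lies in a bounded range $[0, (1+\ell^{-1})(1-\ell^{-2})^{-1}]$ and differs from its finite-level approximation by $O(\ell^{-\delta_i})$ where $\delta_i = \delta(t_i,p)$; the places where $\delta_i$ is large are exactly the $p$ with $t_i^2-4p$ highly $\ell$-divisible, and for those $p$ the approximation is still $O(\ell^{-k})$ once $\delta_i \geq k$ gets truncated. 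So $\sup_p|E_k(p)| = O(\ell^{-k})$, giving
\begin{equation*}
\left|\frac{1}{\pi(x)}\sum_{p\leq x}f_\ell(t_1,p)f_\ell(t_2,p) - \frac{1}{\pi(x)}\sum_{p\leq x}\frac{\mtone\mttwo}{\ell^{4k-4}(\ell^2-1)^2}\right| = O(\ell^{-k})
\end{equation*}
uniformly in $x$. Taking $x\to\infty$ then $k\to\infty$, both sides converge and the error vanishes, yielding \eqref{sts limit}. The main obstacle is precisely making the uniformity in the error term rigorous — i.e. verifying from Gekeler's formula that the rate of convergence of $f_\ell(t,p)$ to its limit is uniform in $p$ coprime to $\ell$ — together with a clean bookkeeping of the $\ell \mid p$ term, which contributes only one prime and is absorbed in the $o(1)$.
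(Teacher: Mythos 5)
Your argument is correct and follows essentially the same route as the paper's proof: replace $f_\ell$ by the finite-level quantity $f_\ell^{(k)}(t,p)=m(t,p;\ell^k)/(\ell^{2k-2}(\ell^2-1))$, average over residue classes modulo $\ell^k$ via Dirichlet's theorem to obtain $\Sts/\bigl(\ell^{5k-5}(\ell-1)^3(\ell+1)^2\bigr)$, and then let $k\to\infty$. The only difference is cosmetic: the paper bounds the discrepancy $f_\ell(t_1,p)f_\ell(t_2,p)-f_\ell^{(k)}(t_1,p)f_\ell^{(k)}(t_2,p)$ by a constant supported on the sparse set of primes with $\ell^k\mid t_i^2-4p$ (density $2/\varphi(\ell^k)$) and uses a $\limsup$/$\liminf$ sandwich, whereas you use a uniform $o_k(1)$ bound on the error — which is valid, though the rate is $O(\ell^{-k/2})$ rather than $O(\ell^{-k})$, since $\ell^k\mid t^2-4p$ only forces $\delta\gtrsim k/2$.
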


Gekeler has also shown a remarkable relation between $f_\ell(t,p)$ and a certain class number.  Let $D$ be a negative discriminant (i.e. $D$ is a negative integer that is congruent to $0$ or $1$ modulo $4$). Let $f$ be the largest square such that $f^2\mid D$ and $D_0=D/f^2$ is congruent to $0$ or $1$ modulo $4$. In this case $D_0$ is the discriminant of the imaginary quadratic field $\mathbb{Q}(\sqrt{D_0})$ and $\mathcal{O}_{D}= \mathbb{Z}+f \mathcal{O}_{D_0}$ is an order of index $f$ in $\mathcal{O}_{D_0}$ (the ring of integers $\mathbb{Q}(\sqrt{D_0})$). Let $h(D)$ denote the \emph{class number} of $\mathcal{O}_D$. The Hurwitz-Kronecker class number of $\mathcal{O}_D$ is defined as
$$\mathcal{H}(D):=\sum_{f^\prime \mid f} h({f^\prime}^2 D_0).$$
We also define a scaled version of $\mathcal{H}(D)$. For a negative discriminant $D$, let $w(D)$ be  the size of the unit group of $\mathcal{O}(D)$ and set
$$H(D):= \sum_{f^\prime \mid f} \frac{h({f^\prime}^2 D_0)}{w({f^\prime}^2 D_0)}.$$
The following is \cite[Theorem 5.5]{G}.
\begin{thrm}[\textbf{Gekeler}]
	\label{theorem: product formula}
Let $p$ be a prime and let $t$ be an integer such that $t^2-4p<0$. Then we have 
that
\begin{equation*}
\label{H-star}
H(t^2-4p)={p} f_\infty (t, p) \prod_{\ell \text{ \rm prime }}  f_\ell (t, p), 
\end{equation*}
where $f_\ell(t,p)$ is defined in \eqref{equation: nuell} and 
\begin{align*}
f_\infty (t, p)&:= \begin{cases} \frac{1}{\pi\sqrt{p}} \sqrt{ 1-\frac{t^2}{4p}}& \emph{\text{for real }} t \emph{\text{ with }} |t|\leq 2\sqrt{p},\\ 0& \emph{\text{otherwise}}. \end{cases} \label{nu-infinite}
\end{align*}
\end{thrm}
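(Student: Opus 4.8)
The plan is to prove Gekeler's theorem by writing both sides as an Euler product over the rational primes together with a single archimedean factor, and then matching them factor by factor. Write $D := t^2 - 4p$, a negative discriminant by hypothesis, and factor $D = f^2 D_0$ with $D_0$ the fundamental discriminant of $K := \QQ(\sqrt{D})$ and $f$ the conductor of the order $\ZZ[\pi] = \ZZ + f\mathcal{O}_{D_0}$, where $\pi$ is a root of $X^2 - tX + p$. The right‑hand side factors $f_\ell(t,p)$ are already computed explicitly in \eqref{fell} in terms of the Kronecker symbol $\left(\frac{(t^2-4p)/\ell^{2\delta}}{\ell}\right)$ and the exponent $\delta=\delta(t,p)$, so the substantive task is to produce a matching Euler‑product expansion of $H(D)$. (A conceptually different route, which I only mention, is to invoke the Deuring correspondence to identify $H(D)$ with the weighted count of elliptic curves over $\FF_p$ with $a_p = t$, and then the Eichler–Selberg trace formula — equivalently point counts on the universal elliptic curve mod $p$ — to realize the same count as a product of local point counts, these being exactly the normalized $\ell^k m(t,p;\ell^k)/|\mathrm{SL}_2(\ZZ/\ell^k\ZZ)|$ whose limits define the $f_\ell(t,p)$.)

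First I would assemble the class‑number side. Dirichlet's class number formula for the fundamental discriminant gives $h(D_0)/w(D_0) = \frac{\sqrt{|D_0|}}{2\pi}\,L(1,\chi_{D_0})$, and the standard comparison between class numbers of an order and of the maximal order gives $h(f'^2 D_0) = \frac{f'\,h(D_0)}{[\mathcal{O}_{D_0}^\times : \mathcal{O}_{f'^2 D_0}^\times]}\prod_{\ell\mid f'}\left(1 - \frac{\chi_{D_0}(\ell)}{\ell}\right)$. Summing over $f'\mid f$ expresses $H(D) = \sum_{f'\mid f} h(f'^2 D_0)/w(f'^2 D_0)$ as an explicit rational multiple of $\frac{\sqrt{|D_0|}}{2\pi}L(1,\chi_{D_0})$; writing $L(1,\chi_{D_0}) = \prod_\ell (1 - \chi_{D_0}(\ell)/\ell)^{-1}$ and absorbing $f = \sqrt{|D|/|D_0|}$ one obtains $H(D) = \frac{\sqrt{|D|}}{2\pi}\prod_\ell g_\ell$, where $g_\ell = (1 - \chi_{D_0}(\ell)/\ell)^{-1}$ for $\ell\nmid D$, and for $\ell\mid D$ the factor $g_\ell$ is an explicit expression in $\chi_{D_0}(\ell)\in\{0,\pm1\}$ and $\nu_\ell(f)$ produced by the conductor corrections (with $\ell=2$ requiring the input $D_0\equiv 0,1\pmod 4$ to pin down the $2$‑adic behaviour).

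Then comes the matching. The archimedean bookkeeping is immediate: $\sqrt{|D|} = \sqrt{4p - t^2} = 2\sqrt{p}\sqrt{1 - t^2/4p}$, hence $\frac{\sqrt{|D|}}{2\pi} = p\cdot f_\infty(t,p)$ with $f_\infty$ as in the statement, so the leading factors already agree. It remains to check $g_\ell = f_\ell(t,p)$ for every prime $\ell$. For $\ell\nmid D$ one has $\delta = 0$ and $\chi_{D_0}(\ell) = \left(\frac{D}{\ell}\right) = \pm 1$; plugging into \eqref{fell} and simplifying $(1-\ell^{-2})^{-1}(1+\chi_{D_0}(\ell)\ell^{-1})$ gives exactly $(1-\chi_{D_0}(\ell)/\ell)^{-1} = g_\ell$. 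For $\ell\mid D$ one uses $\nu_\ell(D) = 2\nu_\ell(f) + \nu_\ell(D_0)$, so that $\delta(t,p) = \nu_\ell(f)$ and the value of $\left(\frac{D/\ell^{2\delta}}{\ell}\right)$ records whether $\ell$ is ramified ($\chi_{D_0}(\ell)=0$), split, or inert in $K$; a short case analysis then shows the three cases of \eqref{fell} reproduce $g_\ell$. Finally, convergence of $\prod_\ell f_\ell(t,p)$ follows from $f_\ell(t,p) = 1 + \chi_{D_0}(\ell)\ell^{-1} + O(\ell^{-2})$ for $\ell\nmid D$ together with the convergence of $\prod_\ell (1 - \chi_{D_0}(\ell)/\ell)^{-1}$ to $L(1,\chi_{D_0})$; since only finitely many $\ell$ divide $D$, this delivers the product formula with value $H(D)$. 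I expect the only genuinely delicate point to be the local matching at $\ell = 2$ and, more generally, the correct identification of $\delta(t,p)$ with $\nu_\ell(f)$ across the ramified and unramified cases — the remainder being Dirichlet's formula and elementary simplification.
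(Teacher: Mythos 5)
The paper does not prove this statement: it is imported verbatim from Gekeler (\cite[Theorem 5.5]{G}), so there is no internal proof to compare against. Your outline is correct and is essentially Gekeler's own argument: the formula $h(f'^2D_0)/w(f'^2D_0)=f'\,\tfrac{h(D_0)}{w(D_0)}\prod_{\ell\mid f'}(1-\chi_{D_0}(\ell)/\ell)$ makes $H(D)$ multiplicative over the primes dividing the conductor, Dirichlet's formula supplies $\tfrac{\sqrt{|D_0|}}{2\pi}L(1,\chi_{D_0})$, and the local factors match \eqref{fell} once one checks $\delta(t,p)=\nu_\ell(f)$ and $\bigl(\tfrac{D/\ell^{2\delta}}{\ell}\bigr)=\chi_{D_0}(\ell)$; I verified the three cases (split, inert, ramified) and the identity $\ell^{e}f_\ell=(1-\chi_{D_0}(\ell)/\ell)^{-1}\bigl(1+(1-\chi_{D_0}(\ell)/\ell)\tfrac{\ell(\ell^{e}-1)}{\ell-1}\bigr)$ does hold. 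The two points you should make explicit in a full write-up are exactly the ones you flag, plus one more: (a) the $2$-adic bookkeeping showing $\delta(t,2)=\nu_2(f)$ under the paper's modified definition of $\delta$ at $\ell=2$ (it does hold, since $(f/2^{\nu_2(f)})^2D_0\equiv D_0\pmod 8$), and (b) the identification $\prod_\ell(1-\chi_{D_0}(\ell)/\ell)^{-1}=L(1,\chi_{D_0})$, which is only conditionally convergent and requires a Mertens-type theorem for the character $\chi_{D_0}$, not mere absolute convergence.
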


We now consider the product of $\eqref{sts limit}$ over all primes $\ell$ to get
\begin{equation}
\label{interchange}
{\pi^2 c_{t_1, t_2}}=\prod_{\ell} \frac{1}{(\ell-1)^3 (\ell+1)^2}\lim_{k\rightarrow \infty} \frac{S(t_1, t_2; \ell^k)}{\ell^{5k-5}}=\prod_{\ell}  \lim_{x\rightarrow \infty}\frac{1}{\pi(x)} \sum_{p\leq x}f_\ell(t_1, p)f_\ell(t_2, p).
\end{equation}
If we can interchange the product and the sum in the above identity, we get that 
\begin{align*}
\sum_{p\leq x} \prod_{\ell} f_\ell(t_1, p)f_\ell(t_2, p)&\sim{\pi^2 c_{t_1, t_2}}\frac{x}{\log{x}},
\end{align*}
as $x\rightarrow\infty$.
Then by employing Theorem \ref{theorem: product formula} and the partial summation formula in   
the above sum we can conclude that
\begin{equation}
\label{identity-1}
c_{t_1, t_2}=\lim_{x\rightarrow \infty} \frac{1}{\log\log{x}}\underset{p\leq x}{\sum \nolimits^{\prime}} \frac{H(t_1^2-4 p)H(t_2^2-4 p)}{p^2}.
\end{equation}
Here ${\sum \nolimits^{\prime}} $ means that the sum is taken over primes $p>\max\{3, t_1^2/4, t_2^2/4\}$.
Thus under the assumption of the interchange of the product and the sum in \eqref{interchange} we find a new interpretation of the universal constant $c_{t_1, t_2}$ as an average value of certain class numbers. Of course, the interchange assumption in \eqref{interchange} is highly non-trivial. 

In our final result, by applying a general theorem of David, Koukoulopoulos, and Smith \cite[Theorem 4.2]{DKS}, we prove that the identity \eqref{identity-1} holds.

\begin{thrm}\label{thrm: classnum}
Let $t_1,t_2\in \ZZ$ and let $x\geq \max\{3, t_1^2/4, t_2^2/4\}$. Then
$$\underset{p\leq x}{\sum \nolimits^{\prime}} \frac{H(t_1^2-4 p)H(t_2^2-4 p)}{p^2}\sim c_{t_1, t_2} \log{\log{x}},$$
as $x\rightarrow\infty$, where $c_{t_1, t_2}$ is the constant given in Theorem \ref{thrm: ctonetwo}. 
\end{thrm}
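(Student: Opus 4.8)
The plan is to combine Gekeler's product formula (Theorem~\ref{theorem: product formula}) with the machinery for prime sums of Euler products of David, Koukoulopoulos, and Smith \cite[Theorem~4.2]{DKS}, and then to recover the double logarithm by partial summation. For every prime $p>\max\{3,t_1^2/4,t_2^2/4\}$ one has $t_i^2-4p<0$ for $i=1,2$, so Theorem~\ref{theorem: product formula} applies to each factor and yields
\[
\frac{H(t_1^2-4p)H(t_2^2-4p)}{p^2}=f_\infty(t_1,p)\,f_\infty(t_2,p)\prod_{\ell}f_\ell(t_1,p)\,f_\ell(t_2,p),
\]
where $f_\infty(t_1,p)\,f_\infty(t_2,p)=\dfrac{1}{\pi^2 p}\sqrt{\bigl(1-\tfrac{t_1^2}{4p}\bigr)\bigl(1-\tfrac{t_2^2}{4p}\bigr)}$. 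Writing $\omega_\ell(p):=f_\ell(t_1,p)f_\ell(t_2,p)$ and $w(t):=\dfrac{1}{\pi^2 t}\sqrt{(1-t_1^2/(4t))(1-t_2^2/(4t))}$ — a smooth positive function on $(\max\{t_1^2/4,t_2^2/4\},\infty)$ with $w(t)\sim 1/(\pi^2 t)$ — the theorem reduces to the asymptotic evaluation of $\sum_{p\le x}w(p)\prod_\ell\omega_\ell(p)$, which is precisely the interchange of $\prod_\ell$ with averaging over $p$ that is assumed in \eqref{interchange}.

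First I would evaluate the unweighted sum $A(x):=\sum_{p\le x}\prod_\ell\omega_\ell(p)$ via \cite[Theorem~4.2]{DKS}. Its hypotheses ask, roughly, that each $\omega_\ell$ be a nonnegative bounded function of $p$ of a controlled arithmetic type, with mean over primes sufficiently close to $1$; these are checked from \eqref{fell}. Indeed, $\omega_\ell(p)$ depends on $p$ only through $\delta(t_i,p;\ell)$ and the quadratic symbols in \eqref{fell}. One has $0\le f_\ell(t,p)\le(1-\ell^{-1})^{-1}\le 2$, so $0\le\omega_\ell\le 4$ and $\|\omega_\ell-1\|_\infty\ll 1/\ell$, which is ample decay for \cite{DKS}. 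Finally, the mean $\bar\omega_\ell:=\lim_{x\to\infty}\pi(x)^{-1}\sum_{p\le x}\omega_\ell(p)$ exists, equals $\frac{1}{(\ell-1)^3(\ell+1)^2}\lim_{k\to\infty}\Sts/\ell^{5k-5}$ by Proposition~\ref{prop: primesum}, and satisfies $\bar\omega_\ell=1+O(1/\ell^2)$ (in the cases not covered by Theorem~\ref{samet} and Proposition~\ref{proved-cases}, this follows from a short computation with \eqref{fell} and the evaluation of $\sum_{p\bmod\ell}\bigl(\tfrac{(t_1^2-4p)(t_2^2-4p)}{\ell}\bigr)$ for $\ell\nmid t_1t_2(t_1^2-t_2^2)$); hence $\prod_\ell\bar\omega_\ell$ converges, in accordance with Theorem~\ref{thrm: ctonetwo}. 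The remaining ingredient, equidistribution of primes in arithmetic progressions, is built into \cite{DKS}. Granting all this, \cite[Theorem~4.2]{DKS} gives $A(x)\sim\bigl(\prod_\ell\bar\omega_\ell\bigr)\,{\rm Li}(x)=\pi^2 c_{t_1,t_2}\,{\rm Li}(x)$, the last equality by Proposition~\ref{prop: primesum} and the definition of $c_{t_1,t_2}$ in Theorem~\ref{thrm: ctonetwo}.

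Next I would reinstate the archimedean weight by partial summation. With $x_0=\max\{3,t_1^2/4,t_2^2/4\}$, absorbing the finitely many primes below $x_0$ into an $O_{t_1,t_2}(1)$,
\[
\sump\frac{H(t_1^2-4p)H(t_2^2-4p)}{p^2}=w(x)A(x)-\int_{x_0}^{x}w'(t)A(t)\,dt+O_{t_1,t_2}(1).
\]
Here $w(x)A(x)\sim c_{t_1,t_2}\,{\rm Li}(x)/x=o(\log\log x)$, while $-w'(t)=\tfrac{1}{\pi^2 t^2}\bigl(1+O(1/t)\bigr)$, so
\[
-\int_{x_0}^{x}w'(t)A(t)\,dt\sim c_{t_1,t_2}\int_{x_0}^{x}\frac{{\rm Li}(t)}{t^2}\,dt\sim c_{t_1,t_2}\int_{x_0}^{x}\frac{dt}{t\log t}\sim c_{t_1,t_2}\log\log x,
\]
with the $O(1/t)$ correction contributing $O\bigl(\int{\rm Li}(t)t^{-3}\,dt\bigr)=O(1)$. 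The factor $\pi^{-2}$ carried by $w$ cancels the $\pi^2$ in $\prod_\ell\bar\omega_\ell$, which yields the asserted asymptotic and hence \eqref{identity-1}.

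The hard part is the verification of the hypotheses of \cite[Theorem~4.2]{DKS} — equivalently, the justification of the interchange in \eqref{interchange}. This is genuinely delicate because $\prod_\ell\omega_\ell(p)$ is \emph{not} bounded uniformly in $p$: by the class number formula it has size $\asymp L(1,\chi_{t_1^2-4p})L(1,\chi_{t_2^2-4p})$, so the Euler product cannot be truncated crudely, and one must instead exploit that for all but $O(\log p)$ of the primes $\ell$ one has $\ell\nmid(t_1^2-4p)(t_2^2-4p)$, whence $\delta(t_i,p;\ell)=0$ and $\omega_\ell(p)=1+O(1/\ell)$ with an averaged correction of size $O(1/\ell^2)$ — precisely the balance that makes the estimates of \cite{DKS} applicable. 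A secondary and routine point is the bookkeeping of the $\ell=p$ Euler factor, of the finitely many small primes omitted from $\sum'$, and of the smoothness of $w$ needed for the partial-summation step.
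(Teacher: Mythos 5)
Your proposal is correct and follows essentially the same route as the paper: Gekeler's product formula combined with \cite[Theorem 4.2]{DKS}, whose hypotheses (I)--(V) are verified exactly as you indicate, via \eqref{fell}, Theorem \ref{thrm: ch-thrm-five}, Proposition \ref{prop: primesum}, Brun--Titchmarsh for $\widetilde{W}$, and Bombieri--Vinogradov for $E$. The only difference is organizational: the paper absorbs the archimedean factor $f_\infty(t_1,p)f_\infty(t_2,p)$ and the $\ell=p$ factor directly into the DKS weight $w_p$ and works on dyadic blocks, so that the $\log\log x$ emerges from $W=\sum_{x<p\le 2x}w_p\sim \pi^{-2}\int_x^{2x}du/(u\log u)$ with no separate partial-summation step, whereas your unweighted-sum-plus-partial-summation variant is an equally valid bookkeeping of the same estimates.
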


We note that in \cite[Theorem 1.2]{ADJ} the following average asymptotic result for a family of elliptic curves is proved. 
\begin{thrm}[\textbf{Akbary-David-Juricevic}]\label{thrm: ADJ}
For $i=1, 2$, let $E_i (a_i, b_i)$ be an elliptic curve given by the Weierstrass  equation 
$$y^2=x^3+a_ix+b_i,$$
{where }$a_i, b_i\in \mathbb{Z}.$
Let $\epsilon > 0$, and let $t$ be an odd integer. For $A,B>0$ with $A,B\geq x^{1+\epsilon}$, as $x\rightarrow \infty$, we have that
$$\frac{1}{16A^2B^2}\sum_{\substack{|a_1|,|a_2|\leq A \\ |b_1|,|b_2|\leq B}}\pi_{E_1,E_2,t, t}(x)\sim  c_{t, t}\log\log x,$$
where $c_{t, t}$ is the constant given in \eqref{ctequals}.
\end{thrm}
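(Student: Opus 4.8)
The plan is to prove Theorem \ref{thrm: ADJ} by interchanging the sum over the family with the sum over primes, factoring the resulting quantity into a perfect square because $E_1$ and $E_2$ depend on disjoint parameters, and then matching the main term with the class-number sum of Theorem \ref{thrm: classnum}. Writing $\pi_{E_1,E_2,t,t}(x)=\sum_{p\leq x}\mathbf{1}[p\nmid N_1]\,\mathbf{1}[p\nmid N_2]\,\mathbf{1}[a_p(E_1)=t]\,\mathbf{1}[a_p(E_2)=t]$ and summing over $|a_1|,|a_2|\leq A$, $|b_1|,|b_2|\leq B$, the factor attached to $(a_i,b_i)$ depends only on $(a_i,b_i)$ once $p$ is fixed, so the two inner sums are identical and we get
$$\frac{1}{16A^2B^2}\sum_{\substack{|a_1|,|a_2|\leq A\\ |b_1|,|b_2|\leq B}}\pi_{E_1,E_2,t,t}(x)=\sum_{p\leq x}M_p^2,\qquad M_p:=\frac{1}{4AB}\#\{(a,b):|a|\leq A,|b|\leq B,~p\nmid N_{E(a,b)},~a_p(E(a,b))=t\}.$$
Here I would first discard the $O(A+B)$ pairs with $\Delta(a,b)=0$ and the $O(AB\sum_p p^{-4})$ pairs whose $\mathbb{Z}$-model fails to be minimal at $p$, both of which contribute $o(1)$ to the final asymptotic.

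The next step is to evaluate $M_p$. Since $t$ is odd we have $t\neq 0$, so $a_p(E(a,b))=t$ forces $t^2<4p$ by Hasse, i.e. only $p>t^2/4$ contribute (and $p\nmid t$ for $p\geq 5$), matching the range of summation in Theorem \ref{thrm: classnum}; contributions of $p=2,3$ are $O(1)$. For a curve with good reduction at $p$, $a_p(E(a,b))$ depends only on $(a,b)\bmod p$, so $M_p=\frac{N_p(t)}{p^2}+\mathcal{E}_p$, where $N_p(t):=\#\{(a,b)\in\mathbb{F}_p^2:\Delta(a,b)\neq 0,~a_p(E(a,b))=t\}$ and $\mathcal{E}_p$ is the lattice-point discrepancy from counting residue classes in a box of sides $\asymp A,B$; a routine count gives $|\mathcal{E}_p|\ll N_p(t)\big(\tfrac{1}{pA}+\tfrac{1}{pB}+\tfrac{1}{AB}\big)$. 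I would then invoke the classical Deuring–Schoof mass formula $N_p(t)=(p-1)H(t^2-4p)$ for $p\geq 5$, $t\neq 0$, $t^2<4p$, $p\nmid t$ — which follows by combining $\sum_{a_p(E)=t}|\mathrm{Aut}_{\mathbb{F}_p}(E)|^{-1}=H(t^2-4p)$ with the fact that each isomorphism class has exactly $(p-1)/|\mathrm{Aut}_{\mathbb{F}_p}(E)|$ short Weierstrass models — together with the trivial bound $H(t^2-4p)\ll p^{1/2+\epsilon}$. This yields $M_p=\frac{(p-1)H(t^2-4p)}{p^2}+O\big(\tfrac{p^{1/2+\epsilon}}{A}+\tfrac{p^{1/2+\epsilon}}{B}+\tfrac{p^{3/2+\epsilon}}{AB}\big)$.

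It remains to sum $M_p^2$ over $p\leq x$. Expanding the square, the main term is
$$\sum_{t^2/4<p\leq x}\frac{(p-1)^2H(t^2-4p)^2}{p^4}=\sum_{t^2/4<p\leq x}\frac{H(t^2-4p)^2}{p^2}+O(1),$$
since $\sum_p H(t^2-4p)^2/p^3$ converges; by Theorem \ref{thrm: classnum} with $t_1=t_2=t$ this is $\sim c_{t,t}\log\log x$, and $c_{t,t}$ agrees with the value in \eqref{ctequals} by Corollary \ref{cor: ct}. For the remaining pieces, using $\frac{(p-1)H(t^2-4p)}{p^2}\ll p^{-1/2+\epsilon}$, the cross term is $\ll\sum_{p\leq x} p^{-1/2+\epsilon}\big(\tfrac{p^{1/2+\epsilon}}{A}+\tfrac{p^{1/2+\epsilon}}{B}+\tfrac{p^{3/2+\epsilon}}{AB}\big)\ll\tfrac{x^{1+2\epsilon}}{A}+\tfrac{x^{1+2\epsilon}}{B}+\tfrac{x^{2+2\epsilon}}{AB}$ and the error-squared term is similarly $\ll\tfrac{x^{2+2\epsilon}}{A^2}+\tfrac{x^{2+2\epsilon}}{B^2}+\tfrac{x^{4+2\epsilon}}{A^2B^2}$; under the hypothesis $A,B\geq x^{1+\epsilon}$ (applying the above with a smaller exponent $\epsilon'<\epsilon$) both are $o(1)=o(\log\log x)$. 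Hence $\sum_{p\leq x}M_p^2\sim c_{t,t}\log\log x$, which is the claim.

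The genuinely deep ingredient is Theorem \ref{thrm: classnum} — the double-logarithmic asymptotic for the class-number sum — which we are permitted to assume; the Deuring–Schoof formula is classical. Granting these, the only real work is the uniform control of the lattice-point discrepancy $\mathcal{E}_p$ over all $p\leq x$, and the hypothesis $A,B\geq x^{1+\epsilon}$ is precisely what makes that work: it forces the accumulated error to be negligible against the slowly growing main term $\log\log x$. I expect this uniform error bookkeeping, rather than any single conceptual step, to be the main technical obstacle.
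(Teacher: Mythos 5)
Your proposal is sound, and it is worth noting at the outset that the paper contains no proof of this statement at all: Theorem \ref{thrm: ADJ} is quoted from \cite{ADJ}, and the paper's own use of it rests on the identity \eqref{equality} (cited as Formula (6) of \cite{ADJ}) together with Theorem \ref{thrm: classnum} and Corollary \ref{cor: ct}. Your route is exactly the one the paper gestures at, except that you re-derive \eqref{equality} from scratch: factoring the family average into $\sum_{p\le x}M_p^2$, reducing $M_p$ to a count over $\mathbb{F}_p$ up to a lattice-point discrepancy, and invoking the mass formula $N_p(t)=(p-1)H(t^2-4p)$ — and your normalization of $H$ (weights $1/w$, orbits of size $(p-1)/|\mathrm{Aut}|$) does match the paper's definition of $H$. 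There is no circularity in then appealing to Theorem \ref{thrm: classnum}, since that theorem is proved via \cite{DKS} independently of \cite{ADJ}; the $\epsilon'<\epsilon$ device you use to make the discrepancy and cross terms $o(1)$ under $A,B\ge x^{1+\epsilon}$ is also fine.

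The one point to repair is the preliminary ``discard'' of non-minimal models. The set of pairs $(a,b)$ in the box whose model is non-minimal at some prime ($p^4\mid a$, $p^6\mid b$) has size $\gg AB\sum_p p^{-10}$, a positive proportion of the box (and your stated bound $O(AB\sum_p p^{-4})$ is larger still), so it cannot be thrown away globally with an unsupported $o(1)$ claim: a priori those curves could carry a constant multiple of the $\log\log x$ main term. The correct (and easy) fix is to treat the issue prime by prime: for a fixed $p$ the pairs with $p^4\mid a$ and $p^6\mid b$ number $O\!\left(ABp^{-10}+Ap^{-4}+Bp^{-6}+1\right)$, so they perturb $M_p$ by $O\!\left(p^{-10}+\tfrac{1}{Bp^{4}}+\tfrac{1}{Ap^{6}}+\tfrac{1}{AB}\right)$, and the resulting change in $\sum_{p\le x}M_p^2$ (both the squares and the cross terms against $N_p(t)/p^2\ll p^{-1/2+\epsilon}$) is $O(1)=o(\log\log x)$. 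With that rephrasing your argument is complete and is, in substance, the original Akbary--David--Juricevic reduction combined with the class-number asymptotic of Theorem \ref{thrm: classnum}.
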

From \cite[Formula (6)]{ADJ} we have, for $A, B\geq x^{1+\epsilon}$ with $\epsilon>0$, that
\begin{equation}
\label{equality}
\frac{1}{16A^2B^2}\sum_{\substack{|a_1|,|a_2|\leq A \\ |b_1|,|b_2|\leq B}}\pi_{E_1,E_2,t, t}(x)= \underset{p\leq x}{\sum \nolimits^{\prime}} \frac{{\left(H(t^2-4 p)\right)}^2}{p^2}+O(1).
\end{equation}
Thus Theorem \ref{thrm: classnum} establishes an extension of the result of Theorem \ref{thrm: ADJ} for even integers $t$. For $t=0$  this result was previously proved by Fouvry and Murty \cite[Theorem 1]{FM2} with better bounds on the variables $A$ and $B$ and with $c_{0, 0}=35/96$. Also, since a formula analogous to \eqref{equality} holds for distinct $t_1$ and $t_2$, as a corollary to Theorem \ref{thrm: classnum}, we conclude that for $A, B>0$, with $A, B\geq x^{1+\epsilon}$, as $x\rightarrow\infty$, we have $$\frac{1}{16A^2B^2}\sum_{\substack{|a_1|,|a_2|\leq A \\ |b_1|,|b_2|\leq B}}\pi_{E_1,E_2,t_1, t_2}(x)\sim  c_{t_1, t_2}\log\log x,$$
where $c_{t_1, t_2}$ is the constant given in Theorem \ref{thrm: ctonetwo}.

The structure of the paper is as follows. In Section \ref{section: prob}, we describe a probabilistic framework for Conjecture \ref{conj: langtrot 2}. 
More precisely, we show that applying the law of large numbers for certain random variables defined in a probabilistic model predicts the conjectured asymptotic in Conjecture \ref{conj: langtrot 2}. In Section \ref{section: theorem onethree proof}, we give a proof of Theorem \ref{thrm: ctonetwo}. In Section \ref{section: ltcomps}, we provide a detailed case by case analysis leading to proofs of Theorem \ref{samet} and Proposition \ref{proved-cases}. The proofs of Proposition  \ref{prop: primesum} and Theorem \ref{thrm: classnum} are given in Sections \ref{section: proof of prop} and \ref{section: DKS} respectively. Finally, our computational approach in Conjecture \ref{conj: constant-distinct-ts} is described in Section \ref{section: comp}. 

\begin{notation}
{\em Throughout the paper $\ell$ and $p$ denote primes. The number of primes not exceeding $x$ is denoted by $\pi(x)$. We use the usual asymptotic notation of analytic number theory in the paper.
By abuse of notation we use $t$ to denote an integer or its associated class in the ring $\ZZ/\ell^k \ZZ$. Similarly for a class $u$ in $\ZZ/\ell^k \ZZ$ we denote an integer representative of this class by $u$. The notation $(\ZZ/\ell^k\ZZ)^*$ is used for the multiplicative group of invertible elements in $\ZZ/\ell^k \ZZ$. We denote the $\ell$-adic valuation of a non-zero  integer $m$ by $\nu_\ell(m)$ and we let $\nu_\ell(0)=\infty$. 
For a ring $R$, we use the usual notations ${\rm M}_2(R)$, ${\rm GL}_2(R)$, and ${\rm SL}_2(R)$, respectively for the set of two by two matrices with entries in $R$, the group of invertible elements of ${\rm M}_2(R)$, and the subgroup of ${\rm GL}_2(R)$  consisting of two by two matrices with determinant $1$.}
\end{notation}

\section{The probabilistic model}\label{section: prob}

We follow \cite[pages 29-38]{LT} closely and build a probabilistic model to predict the density of primes $p$ with given traces $t_1$ and $t_2$ of the Frobenius endomorphism at $p$  for two fixed non $\overline{\QQ}$-isogenous non-CM elliptic curves $E_1$ and $E_2$.  Let 
\begin{equation}
\label{up}
U_p=\left((-2\sqrt{p}, 2\sqrt{p})\cap \mathbb{Z} \right)\times \left((-2\sqrt{p}, 2\sqrt{p})\cap \mathbb{Z}\right)
\end{equation}
and denote an element of $U_p$ by $(u_{p,1}, u_{p,2})$. We aim to define a probability measure $\mu_p$ on $U_p$ such that almost all sequences $((u_{p,1}, u_{p,2}))$ resemble the trace sequence $((a_{E_1}(p), a_{E_2}(p)))$ in some aspects. Here, the almost all is meant with respect to the product measure $\prod_{p} \mu_p$ on the product space $\prod_{p} U_p$. More precisely, given an integer $m>1$ we will equip $U_p$ with a probability measure $\mu_{(p, {E_1}, E_2, m)}$ such that for almost all sequences $((u_{p,1}, u_{p,2}))$ the behaviour of $((u_{p,1}, u_{p,2}))$ modulo $m$ is consistent with the behaviour of the trace sequence $((a_{E_1}(p), a_{E_2}(p)))$ modulo $m$. Furthermore, for almost all sequences $((u_{p,1}, u_{p,2}))$ we want the distribution of the normalized sequence $((\tfrac{u_{p,1}}{2\sqrt{p}}, \tfrac{u_{p,2}}{2\sqrt{p}})) $ in the square $[-1, 1]\times [-1, 1]$ to be consistent with the distribution of the normalized trace sequence $( (\tfrac{a_{E_1}(p)}{2\sqrt{p}},   \tfrac{a_{E_2}(p)}{2\sqrt{p}})  )$.

We next recall an important property of the trace sequence modulo an integer $m$. For $i=1, 2$, let $E_i[m]$ denote the subgroup consisting of points of $E_i$ of order dividing $m$,  and let $K_m=\QQ(E_1[m], E_2[m] )$ be the $m$-division field associated to $E_1$ and $E_2$ (i.e. the field obtained by joining the coordinates of points in $E_1[m]$ or $E_2[m]$ to $\mathbb{Q}$). For $i=1, 2$, let $N_i$ denote the conductor of $E_i$. Then, by \cite[Theorem 7.1]{Sil}, a prime $p\nmid mN_1 N_2$ is unramified in the Galois extension of $K_m/\QQ$. Moreover, the Frobenius conjugacy class $\sigma_p$ in $K_m/\QQ$ can be identified by a pair $(\sigma_{p, 1}, \sigma_{p, 2})$, where $\sigma_{p, i}$ is the Frobenius conjugacy class at $p$ for the extension $\QQ(E_i[m])/\QQ$. Note that ${\rm det}(\sigma_{p,1})={\rm det}(\sigma_{p,2})=p$, thus $(\sigma_{p, 1}, \sigma_{p, 2})\in \Delta(\ZZ/m\ZZ)$.  It is known that for $p\nmid mN_1N_2$ we have
$$(a_{E_1}(p), a_{E_2}(p))\equiv (t_1, t_2) \imod m \iff ({\rm tr}(\sigma_{p, 1}), {\rm tr}(\sigma_{p, 2}))=(t_1, t_2)~~{\rm in}~~\ZZ/m\ZZ.$$
Thus by an application of the Chebotarev density theorem we have
\begin{equation}
\label{Cheb}
\lim_{x\rightarrow\infty} \frac{\#\{p\leq x;~p\nmid mN_1 N_2~{\rm and}~(a_{E_1}(p), a_{E_2}(p))\equiv (t_1, t_2) \imod m\}}{\pi(x)}\\
=\frac{|{G_{E_1, E_2}}(m)_{t_1, t_2}|}{|G_{E_1,E_2}(m)|},\end{equation}
where $\pi(x)$ denotes the prime counting function.

Another signifying property of the trace sequence is the law governing the distribution of  the values of the normalized trace sequence inside the square $[-1, 1]\times [-1, 1]$. More precisely, for a region $R\subset [-1, 1]\times [-1, 1]$, we expect that 
\begin{equation}
\label{S-T}
\lim_{x\rightarrow \infty}    \frac{ \#\left\{p\leq x;~      \left(\frac{a_{E_1}(p)}{2\sqrt{p}}, \frac{a_{E_2}(p)}{2\sqrt{p}}\right)  \in R          \right\} }{\pi(x)} = \frac{4}{\pi^2}{\iint}_{R} \sqrt{1-u^2} \sqrt{1-v^2} dudv.
\end{equation}
In other words, the expected distribution is the two-dimensional joint Sato-Tate distribution. Ram Murty and Pujahari \cite[Proposition 2.1]{MP} have shown, as a consequence of the recent progress in the proof of the Sato-Tate conjecture, that the joint Sato-Tate distribution holds for two Hecke eigenforms with at least one not of CM type, provided that one is not a Dirichlet twist of the other one. Thus the truth of \eqref{S-T} is known for two non-CM,  non-$\mathbb{Q}^{ab}$-isogenous  elliptic curves, where $\mathbb{Q}^{ab}$ denotes the maximal abelian extension of $\mathbb{Q}$.

From now on for simplicity we set ${\bf E}=(E_1, E_2)$, ${\bf t}=(t_1, t_2)$, and ${\bf u}_p =(u_{p,1}, u_{p,2})$.
Our initial goal is to equip $U_p$ with a probability measure such that for almost all sequences $({\bf u}_p)\in\prod_p{U_p}$ the following two principles hold.
\medskip\par

\noindent {\bf Principle 1 (Consistency with the Chebotarev density theorem)} 
For fixed ${\bf t}\in \mathbb{Z} \times \mathbb{Z}$ and integer $m>1$, we have 
$$
\lim_{x\rightarrow\infty} \frac{\#\{p\leq x;~{\bf u}_p\equiv {\bf t} \imod{m}\}}{\pi(x)}\\
=\frac{|{G_{{\bf E}}}(m)_{{\bf t}}|}{|G_{{\bf E}}(m)|}.
$$

\noindent{\bf Principle 2 (Consistency with the Sato-Tate distribution)}
Let $R\subset [-1,1]\times[-1,1]$. Then we have

{$$ \lim_{x\rightarrow \infty}    \frac{ \#\left\{p\leq x;~    \tfrac{1}{2\sqrt{p}}{\bf u}_p   \in R          \right\} }{\pi(x)} = \frac{4}{\pi^2}{\iint}_{R} \sqrt{1-u^2} \sqrt{1-v^2} dudv. $$}

We next propose our choice of the probability measure on $U_p$.

\subsection{The probability space $(U_p, \mu_{(p,{\bf E}, m)})$:}
\label{p-space}
For ${\bf E}$, ${\bf u}_p$, and  a fixed integer $m>1$, we let 
$$\mu_{(p, {\bf E}, m)}({\bf u}_p):=c_p f_\infty({\bf u}_p, p) f{({\bf u}_p, {\bf E}, m)},$$
where
$$f_\infty({\bf u}_p, p):= \frac{1}{\pi^2 p} \sqrt{1-\left(\frac{u_{p,1}}{2\sqrt{p}}\right)^2} \sqrt{1-\left(\frac{u_{p,2}}{2\sqrt{p}}\right)^2},$$
$$
f{({\bf u}_p, {\bf E}, m)}:= \frac{m^2\cdot |{{G_{\bf E}}(m)}_{{\bf u}_p} | }{{|G_{\bf E}}(m)|},$$
and $c_p$ is a constant such that 
\begin{equation}
\label{prob-relation}
\sum_{{\bf u}_p\in U_p} \mu_{(p, {\bf E}, m)}({\bf u}_p)=1.
\end{equation}

The measure $\mu_{(p, {\bf E}, m)}$ is a well-defined probability measure on $U_p$. The following lemma provides information on  this measure as $p\rightarrow \infty$. Parts (ii) and (iii) of this lemma play important roles in proving that Principles 1 and 2 hold for almost all sequences  $(\textbf{u}_p)$ in  $\prod_{p} U_p$ (see Proposition \ref{consistency}). 

\begin{lem}
\label{model-lemma}
The following assertions hold.

(i)  $\lim_{p\rightarrow \infty} c_p=1$.

(ii) For a fixed ${\bf t}\in \mathbb{Z}\times \mathbb{Z}$ we have
$$\lim_{p\rightarrow \infty} \mu_{(p,{\bf E},m)}\left(\left\{{\bf u}_p\in U_p;~{\bf u}_p \equiv {\bf t} \imod{m}     \right\}  \right)=\frac{|{G_{\bf E}(m)}_{{\bf t}}|}{|G_{{\bf E}}(m)|}.$$

(iii) Let $R\subset [-1,1]\times[-1,1]$. Then we have
$$
\lim_{p\rightarrow \infty} \mu_{(p,{\bf E},m)} \left(\left\{ {\bf u}_p\in U_p;~ \tfrac{1}{2\sqrt{p}}{\bf u}_p  \in R          \right\} \right)={\frac{4}{\pi^2}{\iint}_{R} \sqrt{1-x^2} \sqrt{1-y^2} dxdy.}
$$

\end{lem}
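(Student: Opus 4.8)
The plan is to reduce all three parts to one lattice-point/Riemann-sum estimate, using two structural facts about the weight $f(\mathbf{u}_p,\mathbf{E},m)=m^2|G_{\mathbf E}(m)_{\mathbf u_p}|/|G_{\mathbf E}(m)|$: it depends on $\mathbf u_p$ only through $\mathbf u_p\bmod m$, and it has average $1$ over residue classes, i.e. $\frac{1}{m^2}\sum_{\mathbf r\,(m)}f(\mathbf r,\mathbf E,m)=1$, since every element of $G_{\mathbf E}(m)$ has a trace pair, so $\sum_{\mathbf t}|G_{\mathbf E}(m)_{\mathbf t}|=|G_{\mathbf E}(m)|$.

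First I would record the key technical input: for every residue $\mathbf r\in(\ZZ/m\ZZ)^2$ and every Jordan-measurable $R\subseteq[-1,1]^2$,
\[
\frac{1}{\pi^2p}\sum_{\substack{\mathbf u\in U_p,\ \mathbf u\equiv\mathbf r\,(m)\\ \mathbf u/(2\sqrt p)\in R}}\sqrt{1-\Big(\tfrac{u_1}{2\sqrt p}\Big)^{2}}\,\sqrt{1-\Big(\tfrac{u_2}{2\sqrt p}\Big)^{2}}\ \longrightarrow\ \frac{1}{m^2}\cdot\frac{4}{\pi^2}\iint_{R}\sqrt{1-x^2}\,\sqrt{1-y^2}\,dx\,dy
\]
as $p\to\infty$. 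The reason is that the points $\mathbf u/(2\sqrt p)$ with $\mathbf u\equiv\mathbf r\,(m)$ form a shifted sublattice of $[-1,1]^2$ of mesh $m/(2\sqrt p)\to 0$, the common weight $\frac{1}{\pi^2p}=\frac{4}{\pi^2m^2}\big(\tfrac{m}{2\sqrt p}\big)^2$ equals the cell area times $\frac{4}{\pi^2m^2}$, and $(x,y)\mapsto\sqrt{1-x^2}\sqrt{1-y^2}$ is continuous on the compact square; hence the left side is $\frac{4}{\pi^2m^2}$ times a Riemann sum for $\iint_R\sqrt{1-x^2}\sqrt{1-y^2}$. Lattice points near $|u_i|=2\sqrt p$ number $O(\sqrt p)$ and carry a bounded integrand, so they contribute $O(p^{-1/2})$; thus the open interval in the definition \eqref{up} of $U_p$ is harmless. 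Taking $R=[-1,1]^2$ gives $\iint=\pi^2/4$, so the right side is $1/m^2$, which also records the normalization $\frac{4}{\pi^2}\big(\tfrac{\pi}{2}\big)^2=1$.

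Granting this, the three assertions follow with only bookkeeping. For (i), by \eqref{prob-relation} we have $c_p^{-1}=\sum_{\mathbf u_p\in U_p}f_\infty(\mathbf u_p,p)f(\mathbf u_p,\mathbf E,m)$; grouping by the class $\mathbf r$ of $\mathbf u_p$ modulo $m$, pulling out the constant $f(\mathbf r,\mathbf E,m)$, and applying the displayed limit with $R=[-1,1]^2$ yields $c_p^{-1}\to\sum_{\mathbf r\,(m)}f(\mathbf r,\mathbf E,m)\cdot\frac1{m^2}=1$; since $c_p^{-1}>0$ for all $p$ (as $f_\infty>0$ on $U_p$ and some $f(\mathbf r,\mathbf E,m)>0$), this gives $c_p\to 1$. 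For (ii), $\mu_{(p,\mathbf E,m)}(\{\mathbf u_p\equiv\mathbf t\,(m)\})=c_p\,f(\mathbf t,\mathbf E,m)\sum_{\mathbf u_p\equiv\mathbf t}f_\infty(\mathbf u_p,p)\to 1\cdot f(\mathbf t,\mathbf E,m)\cdot\frac1{m^2}=|G_{\mathbf E}(m)_{\mathbf t}|/|G_{\mathbf E}(m)|$ by the definition of $f$. For (iii), grouping the event $\{\mathbf u_p/(2\sqrt p)\in R\}$ by the class of $\mathbf u_p$ modulo $m$, applying the displayed limit to each class with the given (Jordan-measurable) $R$, and combining with $c_p\to1$ and the average-$1$ property gives $\sum_{\mathbf r\,(m)}f(\mathbf r,\mathbf E,m)\cdot\frac1{m^2}\cdot\frac{4}{\pi^2}\iint_R\sqrt{1-x^2}\sqrt{1-y^2}\,dx\,dy=\frac{4}{\pi^2}\iint_R\sqrt{1-x^2}\sqrt{1-y^2}\,dx\,dy$.

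The only genuinely delicate step is the Riemann-sum limit itself: controlling the coarse (mesh $m/(2\sqrt p)$) sums and, as usual, the boundary contribution; everything else is algebraic rearrangement. For part (iii) one also needs the mild hypothesis that $R$ be Jordan-measurable so that $\mathbf 1_R\sqrt{1-x^2}\sqrt{1-y^2}$ is Riemann integrable, and I would state this explicitly where the lemma is invoked.
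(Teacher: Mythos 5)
Your proposal is correct and follows essentially the same route as the paper: both reduce everything to a Riemann-sum limit for $f_\infty$ over residue classes (giving $1/m^2$ per class, or the scaled integral over $R$), then use that $f(\cdot,\mathbf{E},m)$ is constant on classes and averages to $1$ to deduce (i), (ii), and (iii) by the same algebraic rearrangement. Your only additions are to state the combined (residue-class-and-region) Riemann sum explicitly and to flag the Jordan-measurability of $R$, a hypothesis the paper leaves implicit.
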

\begin{proof}
(i) We first observe that $$\lim_{p\rightarrow \infty} \frac{1}{4p} \sum_{{\bf u}_p\in U_p} \frac{4}{\pi^2} \sqrt{1-\left(\frac{u_{p,1}}{2\sqrt{p}}\right)^2} \sqrt{1-\left(\frac{u_{p,2}}{2\sqrt{p}}\right)^2}= \int_{-1}^{1} \int_{-1}^{1} \frac{4}{\pi^2}\sqrt{1-x^2} \sqrt{1-y^2} dx dy=1.$$
Thus for a positive integer $m$ we have that 
$$\lim_{p\rightarrow\infty} \sum_{\substack{{\bf u}_p\in U_p\\  {\bf u}_p \equiv {\bf t} \imod{m}}} f_\infty({\bf u}_p, p)=\frac{1}{m^2}.$$
This yields
\begin{equation}
\label{going-back}
\lim_{p\rightarrow\infty} \sum_{\substack{{\bf u}_p\in U_p\\  {\bf u}_p \equiv {\bf t} \imod{m}}} f_\infty({\bf u}_p, p) f{({\bf u}_p, {\bf E}, m)} 
=\frac{|{{G_{\bf E}}(m)}_{{\bf t}} | }{{|G_{\bf E}}(m)|}.
\end{equation}
Summing both sides of the above identity over ${\bf t}=(t_1, t_2)$, where $t_1$ and $t_2$ vary over congruence classes modulo $m$, results in
$$\lim_{p\rightarrow\infty} \frac{1}{c_p}\sum_{{\bf u}_p\in U_p} \mu_{(p, {\bf E}, m)}({\bf u}_p)=\sum_{{\bf t} \imod m}\frac{|{{G_{\bf E}}(m)}_{{\bf t}}  |}{{|G_{\bf E}}(m)|} =1.$$
Now by applying \eqref{prob-relation} in the above identity, we conclude that $\lim_{p\rightarrow \infty} c_p=1$.

(ii) This follows from \eqref{going-back}, part (i), and definition of $\mu_{(p, {\bf E}, m)}$. 

(iii) The proof is similar to part (i). Observe that 
$$\lim_{p\rightarrow \infty} \frac{1}{4p} \sum_{\tfrac{1}{2\sqrt{p}}{{\bf u}_p}\in R} \frac{4}{\pi^2} \sqrt{1-\left(\frac{u_{p,1}}{2\sqrt{p}}\right)^2} \sqrt{1-\left(\frac{u_{p,2}}{2\sqrt{p}}\right)^2}= \iint_{R}\frac{4}{\pi^2}\sqrt{1-x^2} \sqrt{1-y^2} dx dy.$$
Following steps identical to part (i), we have
$$\lim_{p\rightarrow\infty} \frac{1}{c_p}\sum_{\tfrac{1}{2\sqrt{p}}{{\bf u}_p}\in R} \mu_{(p, {\bf E}, m)}({\bf u}_p) = \iint_{R}\frac{4}{\pi^2}\sqrt{1-x^2} \sqrt{1-y^2} dx dy .$$
By part (i) we have  $\lim_{p\rightarrow\infty} c_p =1$ and the result follows.\end{proof}
In order to show that Principles 1 and 2 hold for almost all sequences $({\bf u}_p)\in \prod_{p} U_p$ we need to appeal to the law of large numbers in probability theory.
\subsection{The law of large numbers:}
For $n\in \mathbb{N}$, let $(U_n, \mu_n)$ be a probability space. Let $X_n$ be a real random variable defined on $U_n$. Define $U:=\prod_n U_n$ and $\mu:=\prod_n \mu_n$.
The following theorem which is one of several versions of the strong law of large numbers is due to Kolmogorov.
\begin{thrm}[\bf Strong law of large numbers]
\label{SLLN}
Let $(X_n)$ be a sequence of square integrable independent random variables with expectation sequence $({\rm E}[X_n])$ and variance sequence $({\rm Var}[X_n])$. Let $(b_n)$ be an increasing sequence of positive real numbers such that $\lim_{n\rightarrow \infty}b_n=\infty$. Assume that $$\sum_{n=1}^{\infty} \frac{{\rm Var}[X_n]}{b_n^2} <\infty.$$ 
Then
$$\lim_{N\rightarrow\infty} \frac{1}{b_N} \left( \sum_{n=1}^N X_n -\sum_{n=1}^N {\rm E}[X_n] \right) \rightarrow 0,$$
for almost all sequences $(u_n)\in U$. 
\end{thrm}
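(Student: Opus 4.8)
The plan is to derive this weighted strong law from the classical trio consisting of Kolmogorov's maximal inequality, Kolmogorov's convergence theorem for series of independent random variables, and Kronecker's lemma. First I would reduce the assertion to the almost sure convergence of a random series. Put $Y_n := X_n/b_n$; the $Y_n$ are independent and square integrable, and by hypothesis $\sum_{n=1}^{\infty} {\rm Var}[Y_n] = \sum_{n=1}^{\infty} {\rm Var}[X_n]/b_n^2 < \infty$. It then suffices to show that $\sum_n \big(Y_n - {\rm E}[Y_n]\big)$ converges for almost all $(u_n) \in U$: granting this, Kronecker's lemma --- if $(c_n)$ is a real sequence with $\sum_n c_n/b_n$ convergent and $(b_n)$ increasing to $\infty$, then $b_N^{-1}\sum_{n=1}^N c_n \to 0$ --- applied pointwise with $c_n := X_n - {\rm E}[X_n]$ gives exactly
$$\lim_{N\to\infty} \frac{1}{b_N}\left(\sum_{n=1}^N X_n - \sum_{n=1}^N {\rm E}[X_n]\right) = 0$$
for almost all $(u_n) \in U$, which is the desired conclusion.

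For the almost sure convergence of $\sum_n \big(Y_n - {\rm E}[Y_n]\big)$ I would invoke Kolmogorov's maximal inequality: if $Z_1, \dots, Z_n$ are independent with mean zero and finite variance then for every $\lambda > 0$,
$${\rm P}\Big(\max_{1 \le j \le n}\Big|\sum_{i=1}^{j} Z_i\Big| \ge \lambda\Big) \le \frac{1}{\lambda^2}\sum_{i=1}^{n} {\rm Var}[Z_i].$$
Applying this on the blocks $\{m+1, \dots, n\}$ to the centered variables $Z_i := Y_i - {\rm E}[Y_i]$ and letting $n \to \infty$, one obtains for the partial sums $S_N := \sum_{n=1}^{N}(Y_n - {\rm E}[Y_n])$ and every $\varepsilon > 0$ that
$${\rm P}\Big(\sup_{n > m}|S_n - S_m| \ge \varepsilon\Big) \le \frac{1}{\varepsilon^2}\sum_{i > m}{\rm Var}[Y_i] \longrightarrow 0 \quad (m \to \infty).$$
Hence $(S_N)$ is almost surely Cauchy, so it converges on a set of full $\mu$-measure; this is precisely Kolmogorov's convergence theorem (the one-series theorem), and it closes the argument.

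The only step that is not pure bookkeeping is Kolmogorov's maximal inequality together with the passage from it to almost sure Cauchyness; the reduction $Y_n = X_n/b_n$, Kronecker's lemma, and the measure-theoretic manipulations are routine. Since the statement is entirely classical, the paper may instead simply cite a standard probability reference for the Kolmogorov strong law in this weighted form, the argument above being what one would reconstruct for a self-contained treatment. In the intended application the $X_n$ are indicator-type random variables built from the measures $\mu_{(p,{\bf E},m)}$ of this section and $(b_n)$ is a suitable normalizing sequence; the verification that $\sum_n {\rm Var}[X_n]/b_n^2 < \infty$ for that choice is carried out separately and is not part of the present theorem.
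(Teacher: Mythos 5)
Your proof is correct. For the record, the paper does not prove this theorem at all: its ``proof'' is a one-line citation to It\^o's textbook (Theorem 4.5.1 there), so there is no argument in the paper to compare against. What you have written is the standard self-contained derivation that the cited reference contains: reduce to the one-series theorem by setting $Y_n = X_n/b_n$, establish almost sure convergence of $\sum_n (Y_n - \mathrm{E}[Y_n])$ via Kolmogorov's maximal inequality applied on blocks (giving the almost sure Cauchy property of the partial sums), and then transfer back with Kronecker's lemma applied pathwise to $c_n = X_n - \mathrm{E}[X_n]$. Each step is sound, the hypotheses ($b_n$ increasing to infinity, $\sum_n \mathrm{Var}[X_n]/b_n^2 < \infty$, independence and square integrability) are used exactly where needed, and your closing remark correctly anticipates both that the authors would likely just cite a reference and that the verification of the variance condition in the application belongs to the surrounding propositions rather than to this theorem.
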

\begin{proof}
See \cite[p. 187, Theorem 4.5.1]{Ito}.
\end{proof}
The following is a direct corollary of the strong law of large numbers.
\begin{cor}
\label{cor-SLLN}
If 
$$\lim_{n\rightarrow \infty} E[X_n]=L~~{\rm and}~~\sum_{n=1}^{\infty} \frac{{\rm E}[X_n^2]}{n^2} <\infty,$$ 
then
$$\lim_{N\rightarrow\infty} \frac{1}{N}  \sum_{n=1}^N X_n =L,$$
for almost all sequences $(u_n)\in U$.  
\end{cor}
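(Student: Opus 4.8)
The plan is to deduce Corollary~\ref{cor-SLLN} directly from Theorem~\ref{SLLN} with the particular choice $b_n = n$. First I would check the hypotheses of the strong law. Since $\mathrm{E}[X_n^2] < \infty$ for each $n$ (this is implicit in the assumption $\sum \mathrm{E}[X_n^2]/n^2 < \infty$, which forces each term to be finite), each $X_n$ is square integrable, so $\mathrm{Var}[X_n] = \mathrm{E}[X_n^2] - \mathrm{E}[X_n]^2 \leq \mathrm{E}[X_n^2]$ is well-defined and finite. The independence of the $X_n$ is already assumed in the setup of the subsection (the $X_n$ live on the factors of a product probability space). Taking $b_n = n$, which is increasing with $b_n \to \infty$, the convergence hypothesis of Theorem~\ref{SLLN} reads $\sum_{n=1}^\infty \mathrm{Var}[X_n]/n^2 < \infty$, and this follows from $\mathrm{Var}[X_n] \leq \mathrm{E}[X_n^2]$ together with the assumed convergence of $\sum_n \mathrm{E}[X_n^2]/n^2$ by comparison.

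Next I would apply Theorem~\ref{SLLN} to conclude that, for almost all sequences $(u_n) \in U$,
$$\lim_{N\to\infty} \frac{1}{N}\left( \sum_{n=1}^N X_n - \sum_{n=1}^N \mathrm{E}[X_n] \right) = 0.$$
It then remains to handle the Cesàro average of the expectations. By the hypothesis $\lim_{n\to\infty} \mathrm{E}[X_n] = L$, the classical Cesàro mean theorem (if $a_n \to L$ then $\frac{1}{N}\sum_{n=1}^N a_n \to L$) gives $\frac{1}{N}\sum_{n=1}^N \mathrm{E}[X_n] \to L$. Combining these two limits,
$$\lim_{N\to\infty} \frac{1}{N}\sum_{n=1}^N X_n = \lim_{N\to\infty} \frac{1}{N}\left( \sum_{n=1}^N X_n - \sum_{n=1}^N \mathrm{E}[X_n] \right) + \lim_{N\to\infty} \frac{1}{N}\sum_{n=1}^N \mathrm{E}[X_n] = 0 + L = L$$
for almost all $(u_n) \in U$, which is exactly the assertion of the corollary.

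There is essentially no main obstacle here: the statement is a routine packaging of Kolmogorov's strong law with $b_n = n$, plus the elementary fact that Cesàro summation preserves limits. The only point requiring a word of care is the passage from the second-moment bound to the variance bound, i.e. noting $\mathrm{Var}[X_n] \leq \mathrm{E}[X_n^2]$ so that the finiteness of $\sum_n \mathrm{E}[X_n^2]/n^2$ transfers to $\sum_n \mathrm{Var}[X_n]/n^2$; and observing that square integrability of each $X_n$ is implied by the stated hypothesis so that Theorem~\ref{SLLN} genuinely applies.
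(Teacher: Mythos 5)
Your proposal is correct and matches the paper's own proof: both apply Theorem \ref{SLLN} with $b_n=n$, use ${\rm Var}[X_n]\leq {\rm E}[X_n^2]$ to verify the convergence hypothesis, and finish with the fact that Ces\`aro averages preserve limits. Your write-up merely spells out these steps in more detail than the paper does.
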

\begin{proof}
In Theorem \ref{SLLN}, let $b_n=n$ and note that ${\rm Var}[X_n]={\rm E}[X_n^2]-{\rm E}[X_n]^2$. Then the conditions of Theorem \ref{SLLN} are satisfied and the result follows since the sequence of averages of a sequence converging to $L$ also converges to $L$.  
\end{proof}

In our work we consider the following special instances of the above theorem and its corollary. For a prime $p$, let $(U_p, \mu_p)$ be a probability space. In this section $U_p$ is a general set that may be different from \eqref{up}. Let $S_p\subset U_p$ be measurable. Let $X_p: U_p \rightarrow \{0,1\}$ be such that 
\begin{equation}
\label{random variable}
X_p(u_p)=\begin{cases}
1&{\rm if}~~u_p\in S_p,\\
0&{\rm if}~~u_p\not\in S_p.
\end{cases}
\end{equation}

Observe that
$$E[X_p]=E[X_p^2]=\mu_p(S_p).$$
The following version of Corollary \ref{cor-SLLN} is essentially Theorem 2.1 of  \cite{LT}.
\begin{prop}
\label{prop-SLLN}
With the above notation, if 
$$\lim_{p\rightarrow \infty} \mu_p(S_p)=L,$$
then
$$\lim_{N\rightarrow\infty} \frac{1}{\pi(N)} \#\{p\leq N;~u_p\in S_p\}=L,$$
for almost all sequences $(u_p)\in U$. 
\end{prop}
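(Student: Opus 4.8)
The plan is to deduce the proposition from Corollary~\ref{cor-SLLN} by simply re-indexing the primes. Let $p_1 < p_2 < \cdots$ enumerate all primes, and for $n \in \NN$ put $(U_n, \mu_n) := (U_{p_n}, \mu_{p_n})$ and $X_n := X_{p_n}$. Then $U = \prod_p U_p$ is canonically identified with $\prod_n U_n$ and $\mu = \prod_p \mu_p$ with $\prod_n \mu_n$, so the hypotheses and conclusion of Theorem~\ref{SLLN} and Corollary~\ref{cor-SLLN} apply verbatim (in particular, under $\mu$ the coordinate random variables $X_n$ are independent). First I would record the elementary bookkeeping identity: if $N$ is large and $M = \pi(N)$, so that $\{p;~p\le N\} = \{p_1,\dots,p_M\}$, then
$$\#\{p \le N;~ u_p \in S_p\} = \sum_{n=1}^{M} X_n(u_{p_n}),$$
hence $\frac{1}{\pi(N)}\#\{p\le N;~u_p\in S_p\} = \frac{1}{M}\sum_{n=1}^M X_n$. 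Since $M = \pi(N) \to \infty$ as $N \to \infty$, it suffices to prove $\frac{1}{M}\sum_{n=1}^M X_n \to L$ as $M\to\infty$ for almost all $(u_n)\in U$.

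Next I would check the two hypotheses of Corollary~\ref{cor-SLLN}. Each $X_n$ takes values in $\{0,1\}$, so it is square integrable and $E[X_n] = E[X_n^2] = \mu_{p_n}(S_{p_n})$. For the first hypothesis, $p_n \to \infty$ as $n\to\infty$, so the assumption $\lim_{p\to\infty}\mu_p(S_p) = L$ gives $\lim_{n\to\infty} E[X_n] = \lim_{n\to\infty}\mu_{p_n}(S_{p_n}) = L$. For the second, $E[X_n^2] = \mu_{p_n}(S_{p_n}) \le 1$, so
$$\sum_{n=1}^{\infty} \frac{E[X_n^2]}{n^2} \le \sum_{n=1}^{\infty}\frac{1}{n^2} < \infty.$$
Corollary~\ref{cor-SLLN} then yields $\lim_{M\to\infty}\frac{1}{M}\sum_{n=1}^M X_n = L$ for almost all $(u_n)\in U$, which by the first paragraph is exactly the assertion of the proposition.

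There is essentially no obstacle here: the statement is a routine specialization of the strong law of large numbers (it is essentially \cite[Theorem~2.1]{LT}). The only points deserving a word of care are the passage between the indexing by primes and the indexing by $\NN$ — which is harmless since $\pi(N)$ runs through every nonnegative integer and $\pi(N)\to\infty$ — and the observation that the product measure $\prod_p \mu_p$ appearing in the statement is literally the product measure for which Theorem~\ref{SLLN} is stated, so that the phrase \emph{almost all} carries the same meaning in both formulations.
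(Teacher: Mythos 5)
Your proof is correct and follows exactly the paper's argument: re-index by the $n$-th prime, set $X_n:=X_{p_n}$, use $E[X_n]=E[X_n^2]=\mu_{p_n}(S_{p_n})\le 1$ to verify the hypotheses of Corollary \ref{cor-SLLN}, and conclude. Your extra bookkeeping identifying $\frac{1}{\pi(N)}\#\{p\le N;~u_p\in S_p\}$ with $\frac{1}{M}\sum_{n=1}^{M}X_n$ for $M=\pi(N)$ is a welcome clarification of a step the paper leaves implicit.
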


\begin{proof}
Let $p_n$ be the $n$-th prime. Then for any $n\in \mathbb{N}$, let $X_n:=X_{p_n}$,  where $X_{p_n}$ is the random variable defined in \eqref{random variable}. Since $E[X_n]=E[X_n^2]=\mu_{p_n} (S_{p_n})$ and $\lim_{n\rightarrow \infty} \mu_{p_n} (S_{p_n})=L$.
Then by Corollary \ref{cor-SLLN} we have
$$\lim_{N\rightarrow\infty} \frac{1}{N} \#\{p\leq p_N;~u_p\in S_p\}=L.$$\end{proof}

The following proposition is a direct consequence of Theorem \ref{SLLN}.

\begin{prop}
\label{consequence}
With the notation as before, for $C>0$, let 
$$ \mu_p(S_p)\sim \frac{C}{p},\textrm{ as } p\rightarrow \infty.$$
Then, as $N\rightarrow \infty$,
$$\#\{p\leq N;~u_p\in S_p\}\sim C \sum_{p\leq N} \frac{1}{p},$$
for almost all sequences $(u_p)\in \prod_{p}U_p$. 
\end{prop}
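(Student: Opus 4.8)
\textbf{Proof proposal for Proposition \ref{consequence}.}

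The plan is to apply the Strong Law of Large Numbers (Theorem \ref{SLLN}) to the indicator random variables $X_p$ defined in \eqref{random variable}, with a judicious choice of the normalizing sequence $(b_n)$ that is adapted to the rate $\mu_p(S_p)\sim C/p$. First I would set $p_n$ to be the $n$-th prime and put $X_n:=X_{p_n}$, so that $\mathrm{E}[X_n]=\mathrm{E}[X_n^2]=\mu_{p_n}(S_{p_n})\sim C/p_n$ and $\mathrm{Var}[X_n]=\mathrm{E}[X_n^2]-\mathrm{E}[X_n]^2\leq \mathrm{E}[X_n^2]\sim C/p_n$. The target partial sum $\sum_{n\leq N}\mathrm{E}[X_n]=\sum_{p\leq p_N}\mu_p(S_p)$ is asymptotic to $C\sum_{p\leq p_N}1/p\sim C\log\log p_N$ by Mertens' theorem, so the natural normalization is $b_N:=\sum_{p\leq p_N}1/p$ (or, equivalently, $b_N=\log\log p_N$); this is an increasing sequence of positive reals tending to infinity, as required.

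The key step is verifying the Kolmogorov convergence condition $\sum_{n=1}^\infty \mathrm{Var}[X_n]/b_n^2<\infty$. Since $\mathrm{Var}[X_n]\ll 1/p_n$ and the prime number theorem gives $p_n\sim n\log n$, and since $b_n\sim \log\log n$, the general term is $\ll 1/(p_n (\log\log n)^2)\ll 1/(n\log n (\log\log n)^2)$, whose series converges by the integral test (indeed $\int^\infty dx/(x\log x (\log\log x)^2)<\infty$). Hence Theorem \ref{SLLN} applies and yields
$$\lim_{N\rightarrow\infty}\frac{1}{b_N}\left(\sum_{n=1}^N X_n-\sum_{n=1}^N\mathrm{E}[X_n]\right)=0$$
for almost all sequences $(u_p)\in\prod_p U_p$. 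Since $\sum_{n=1}^N X_n=\#\{p\leq p_N;~u_p\in S_p\}$ and $\sum_{n=1}^N\mathrm{E}[X_n]=\sum_{p\leq p_N}\mu_p(S_p)\sim C\sum_{p\leq p_N}1/p=Cb_N$, dividing through by $b_N$ shows that $\#\{p\leq p_N;~u_p\in S_p\}/b_N\to C$, i.e. $\#\{p\leq p_N;~u_p\in S_p\}\sim Cb_N\sim C\sum_{p\leq p_N}1/p$ almost surely. Finally I would interpolate from the subsequence $N\mapsto p_N$ to all $N$: for $p_N\leq x<p_{N+1}$ the counting function $\#\{p\leq x;~u_p\in S_p\}$ changes by at most $1$ while $\sum_{p\leq x}1/p$ changes by $1/p_{N+1}=o(1)\cdot(\sum_{p\leq x}1/p)$, so the asymptotic $\#\{p\leq N;~u_p\in S_p\}\sim C\sum_{p\leq N}1/p$ holds for almost all sequences $(u_p)$.

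The main obstacle is purely bookkeeping: one must be careful that the hypothesis $\mu_p(S_p)\sim C/p$ (an asymptotic, not an exact equality) is strong enough both to force $\sum\mathrm{E}[X_n]\sim Cb_N$ and to keep $\sum\mathrm{Var}[X_n]/b_n^2$ summable; the slack in replacing $\mathrm{Var}[X_n]$ by the larger $\mathrm{E}[X_n^2]$ costs nothing here since the resulting series still converges comfortably. No delicate cancellation or independence subtlety beyond what Theorem \ref{SLLN} already provides is needed, because the $X_p$ live on the independent factors $U_p$ of the product space by construction.
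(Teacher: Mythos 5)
Your proof is correct and follows essentially the same route as the paper: both apply Kolmogorov's strong law (Theorem \ref{SLLN}) to the indicators $X_{p_n}$, differing only in the normalizing sequence --- the paper takes $b_n=(\log\log p_n)^{1/2+\epsilon/2}$, which yields the sharper conclusion $\sum_{n\le N}X_n=\sum_{n\le N}\mathrm{E}[X_n]+o\bigl((\log\log p_N)^{1/2+\epsilon/2}\bigr)$, while your choice $b_N=\sum_{p\le p_N}1/p\sim\log\log p_N$ still satisfies the Kolmogorov condition $\sum_n \mathrm{Var}[X_n]/b_n^2<\infty$ and suffices for the stated asymptotic. Your explicit interpolation from the subsequence $p_N$ to all $N$ is a harmless (indeed trivial, since both sides are step functions jumping only at primes) addition.
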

\begin{proof}
Let $p_n$ and $X_n$ be defined as in Proposition \ref{prop-SLLN}. For $\epsilon>0$, let $b_n=(\log\log{p_n})^{1/2+\epsilon/2}$. Then $b_n$ is an increasing sequence of real numbers such that $\lim_{n\rightarrow \infty} b_n=\infty$ and 
$$\sum_{n=1}^{\infty} \frac{{\rm Var}[X_n]}{b_n^2} =\sum_{p} \frac{\mu_p(S_p)-\mu_p(S_p)^2}{(\log\log{p})^{1+\epsilon}}\ll \sum_{p} \frac{1}{p(\log\log{p})^{1+\epsilon}}<\infty ,$$ 
by our assumption that $\mu_p(S_p)\ll 1/p$. Since $E[X_n]=\mu_{p_n}(S_{p_n})$, by Theorem \ref{SLLN}, we have
$$\sum_{n=1}^{N} X_n=\#\{p\leq p_N;~u_p\in S_p\}= \sum_{n=1}^{N} \mu_{p_n}(S_{p_n})+o\Big((\log\log{p_N})^{1/2+\epsilon/2}\Big),$$
for almost all sequences $(u_p)\in \prod_p U_p$.  Thus, since $\mu_{p_n}(S_{p_n})\sim C/{p_n}$, we have
$$\#\{p\leq p_N;~u_p\in S_p\}\sim C\sum_{p\leq p_N} \frac{1}{p},$$
as $N\rightarrow\infty$,
for almost all sequences $(u_p)\in \prod_p U_p$. 
\end{proof}

\subsection{The probabilistic model}
We now return to the notation of Section \ref{p-space} and let $U_p$ be defined as in \eqref{up}. As a first task, by employing the results of the previous section, we prove that the model for the distribution of the trace sequence modulo $m$ described in Section \ref{p-space} is consistent with the Chebotarev density theorem and the Sato-Tate distribution (Principles 1 and 2). We have the following. 
\begin{prop}
\label{consistency}
With the notation of Section \ref{p-space}, the following assertions hold.

(i) For fixed ${\bf t}\in \mathbb{Z}\times \mathbb{Z}$ we have
$$\lim_{N\rightarrow \infty}    \frac{ \#\{p\leq N;~   {\bf u}_p \equiv {\bf t}\imod{m} \} }{\pi(N)}  =\frac{|{G_{\bf E}(m)}_{{\bf t}}|}{|G_{{\bf E}}(m)|},$$
for almost all sequences $({\bf u}_p)\in \prod_p U_p$.

(ii) Let $R\subset [-1,1]\times[-1,1]$. Then we have {
$$ \lim_{N\rightarrow \infty}    \frac{ \#\left\{p\leq N;~     \tfrac{1}{2\sqrt{p}}{ {\bf u}_p}  \in R          \right\} }{\pi(N)} = \frac{4}{\pi^2}{\iint}_{R} \sqrt{1-x^2} \sqrt{1-y^2} dxdy, $$}
for almost all sequences $({\bf u}_p)\in \prod_p U_p$.
\end{prop}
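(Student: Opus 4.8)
The plan is to deduce Proposition \ref{consistency} directly from the probabilistic results of the previous subsection, applying them to the concrete probability spaces $(U_p, \mu_{(p,{\bf E},m)})$ together with the asymptotic information recorded in Lemma \ref{model-lemma}. For part (i), fix ${\bf t}\in\ZZ\times\ZZ$ and set
$$S_p := \{{\bf u}_p\in U_p;~{\bf u}_p\equiv {\bf t}\imod{m}\},$$
which is a measurable (indeed finite) subset of $U_p$. By Lemma \ref{model-lemma}(ii) we have
$$\lim_{p\rightarrow\infty}\mu_{(p,{\bf E},m)}(S_p)=\frac{|{G_{\bf E}(m)}_{{\bf t}}|}{|G_{{\bf E}}(m)|}=:L.$$
This is exactly the hypothesis of Proposition \ref{prop-SLLN} (with $\mu_p=\mu_{(p,{\bf E},m)}$, $U=\prod_p U_p$, and $\mu=\prod_p\mu_{(p,{\bf E},m)}$), so Proposition \ref{prop-SLLN} gives
$$\lim_{N\rightarrow\infty}\frac{1}{\pi(N)}\#\{p\leq N;~{\bf u}_p\in S_p\}=L$$
for almost all sequences $({\bf u}_p)\in\prod_p U_p$, which is the assertion of (i). For part (ii), fix a region $R\subset[-1,1]\times[-1,1]$ and set $S_p:=\{{\bf u}_p\in U_p;~\tfrac{1}{2\sqrt p}{\bf u}_p\in R\}$; then Lemma \ref{model-lemma}(iii) shows $\mu_{(p,{\bf E},m)}(S_p)$ converges to $L:=\tfrac{4}{\pi^2}\iint_R\sqrt{1-x^2}\sqrt{1-y^2}\,dxdy$ as $p\rightarrow\infty$, and another application of Proposition \ref{prop-SLLN} yields the claimed almost-sure asymptotic density.

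One point that deserves a brief remark is the standing hypotheses needed to invoke Proposition \ref{prop-SLLN}: one must check that the random variables $X_{p_n}$ (indicator of $S_{p_n}$, as in \eqref{random variable}) are square integrable, independent, and satisfy the variance condition. Square integrability is automatic since $X_{p_n}\in\{0,1\}$; independence across distinct primes is built into the product structure of $(U,\mu)=\prod_p(U_p,\mu_{(p,{\bf E},m)})$; and because $\mathrm{Var}[X_{p_n}]\le \mathrm{E}[X_{p_n}^2]=\mu_{p_n}(S_{p_n})\le 1$, the series $\sum_n \mathrm{Var}[X_{p_n}]/n^2$ converges (this is already implicitly handled inside the proof of Proposition \ref{prop-SLLN} via Corollary \ref{cor-SLLN}). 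So the two parts of the proposition are genuinely immediate corollaries once Lemma \ref{model-lemma} is in hand; no new estimate is required.

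I do not expect a serious obstacle here: the conceptual and analytic work has been front-loaded into Lemma \ref{model-lemma} (in particular the convergence $c_p\to 1$, which is what makes the measure behave like the ``Sato-Tate weighted'' count in the large-$p$ limit) and into the probabilistic machinery of Proposition \ref{prop-SLLN}. The only care to take is purely bookkeeping: making sure that in each part the set $S_p$ is written as the appropriate measurable subset of $U_p$, that the limit $L$ is identified with the right-hand side of the target formula using the correct part of Lemma \ref{model-lemma}, and that the product probability space is the same in both applications. I would therefore write the proof of Proposition \ref{consistency} as a short two-paragraph argument, one paragraph per part, each consisting of choosing $S_p$, quoting the relevant clause of Lemma \ref{model-lemma}, and invoking Proposition \ref{prop-SLLN}.
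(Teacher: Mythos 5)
Your proposal is correct and matches the paper's proof exactly: the paper also derives both parts as immediate consequences of Proposition \ref{prop-SLLN} combined with parts (ii) and (iii) of Lemma \ref{model-lemma}. The extra verification of the hypotheses of the law of large numbers is harmless bookkeeping and does not change the argument.
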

\begin{proof}
This is an immediate consequence of Proposition \ref{prop-SLLN} and parts (ii) and (iii) of Lemma \ref{model-lemma}.
\end{proof}

Next observe that for two integers $a, b$ we have
$$a=b \iff a\equiv b \imod{m},~~ {\rm as}~~  
m\widetilde{\rightarrow} \infty.$$ 
Thus the congruences modulo $m$ as $m\widetilde{\rightarrow} \infty$ capture the values of the trace sequence. In conclusion,  as a model for the distribution of primes with given trace for two elliptic curves we consider the product space $U=\prod_p U_p$  in which each $U_p$ is equipped with a probability measure $\mu_{(p, {\bf E})}$ defined by
$$\mu_{(p, {\bf E})} ({\bf u}_p)=\lim_{m \widetilde{\rightarrow} \infty} \mu_{(p, {\bf E}, m)}({\bf u}_p).$$

We now state an application of Proposition \ref{consequence}, which provides probabilistic evidence of  Conjecture \ref{conj: langtrot 2}.
\begin{prop}
With $(U_p, \mu_{(p, {\bf E})})$  as above, for fixed ${\bf t}$, as $N\rightarrow \infty$, we have
$$\#\{p\leq N;~{\bf u}_p= {\bf t} \}\sim \frac{1}{\pi^2 } \lim_{m \widetilde{\rightarrow} \infty} \frac{m^2\cdot |{{G_{\bf E}}(m)}_{\bf t}  |}{|{G_{\bf E}}(m)|} \log\log{N},$$
for almost all sequences $({\bf u}_p)\in \prod_p U_p$.
\end{prop}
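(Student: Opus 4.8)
The plan is to apply Proposition \ref{consequence} with the concrete choices $U_p$ as in \eqref{up}, $\mu_p = \mu_{(p, {\bf E})}$, and $S_p = \{{\bf u}_p \in U_p;~ {\bf u}_p = {\bf t}\}$, so that the random variable $X_p$ of \eqref{random variable} is the indicator of the event ``${\bf u}_p = {\bf t}$''. With $u_p \in S_p$ reading as ``${\bf u}_p = {\bf t}$'', the conclusion of Proposition \ref{consequence} is exactly the asserted asymptotic $\#\{p\leq N;~{\bf u}_p = {\bf t}\}\sim C\sum_{p\leq N}\tfrac 1p$, and since $\sum_{p\leq N}\tfrac 1p \sim \log\log N$ by Mertens' theorem, it remains only to verify the hypothesis of Proposition \ref{consequence}, namely that
$$\mu_{(p,{\bf E})}(S_p) \sim \frac{C}{p}, \qquad C = \frac{1}{\pi^2}\lim_{m\widetilde{\rightarrow}\infty}\frac{m^2\cdot|{G_{\bf E}}(m)_{\bf t}|}{|{G_{\bf E}}(m)|},$$
as $p\to\infty$.

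First I would unwind the definition of $\mu_{(p,{\bf E})}$. By construction $\mu_{(p, {\bf E})}({\bf u}_p) = \lim_{m\widetilde{\rightarrow}\infty}\mu_{(p, {\bf E}, m)}({\bf u}_p)$, and for a fixed point ${\bf u}_p = {\bf t}$ the mod-$m$ reduction of ${\bf t}$ eventually equals ${\bf t}$ itself as $m\widetilde{\rightarrow}\infty$, so
$$\mu_{(p,{\bf E})}(\{{\bf t}\}) = c_p\, f_\infty({\bf t}, p)\lim_{m\widetilde{\rightarrow}\infty} f({\bf t}, {\bf E}, m) = c_p\, f_\infty({\bf t}, p)\cdot \lim_{m\widetilde{\rightarrow}\infty}\frac{m^2\cdot|{G_{\bf E}}(m)_{\bf t}|}{|{G_{\bf E}}(m)|};$$
here one should check that the limit defining $\mu_{(p,{\bf E})}$ genuinely exists, which follows from the analogue of Serre's open image theorem for ${\bf E}$ recorded in the introduction (the quantity $m^2|{G_{\bf E}}(m)_{\bf t}|/|{G_{\bf E}}(m)|$ stabilizes along the sequence $m_n = \prod_{\ell\leq n}\ell^n$ once $n$ is large, exactly as in the single-curve case). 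Next, from the definition of $f_\infty$ we have
$$f_\infty({\bf t}, p) = \frac{1}{\pi^2 p}\sqrt{1-\Big(\tfrac{t_1}{2\sqrt p}\Big)^2}\sqrt{1-\Big(\tfrac{t_2}{2\sqrt p}\Big)^2} = \frac{1}{\pi^2 p}\big(1 + O_{\bf t}(1/p)\big),$$
since $t_1, t_2$ are fixed. Combining this with Lemma \ref{model-lemma}(i), which gives $c_p \to 1$, we obtain
$$\mu_{(p,{\bf E})}(S_p) = \big(1+o(1)\big)\cdot\frac{1}{\pi^2 p}\cdot\lim_{m\widetilde{\rightarrow}\infty}\frac{m^2\cdot|{G_{\bf E}}(m)_{\bf t}|}{|{G_{\bf E}}(m)|} \sim \frac{C}{p},$$
which is precisely the hypothesis needed.

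Finally I would invoke Proposition \ref{consequence} with this value of $C$ to conclude that, for almost all sequences $({\bf u}_p)\in\prod_p U_p$,
$$\#\{p\leq N;~{\bf u}_p = {\bf t}\} \sim C\sum_{p\leq N}\frac 1p \sim \frac{1}{\pi^2}\lim_{m\widetilde{\rightarrow}\infty}\frac{m^2\cdot|{G_{\bf E}}(m)_{\bf t}|}{|{G_{\bf E}}(m)|}\,\log\log N,$$
as $N\to\infty$, which is the claimed statement. The only genuinely delicate point is the justification that the limit $\lim_{m\widetilde{\rightarrow}\infty}\mu_{(p,{\bf E},m)}({\bf u}_p)$ exists and defines a probability measure on $U_p$ (so that Proposition \ref{consequence} is applicable) — this rests on the open image theorem for the pair $(E_1,E_2)$ and the resulting stabilization, together with the observation that the normalization constants $c_p$ also converge along the sequence $m_n$; everything else is a routine substitution of the explicit formula for $f_\infty$ and an appeal to Lemma \ref{model-lemma}(i) and Mertens' theorem.
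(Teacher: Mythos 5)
Your proposal is correct and follows essentially the same route as the paper: unwind the definition of $\mu_{(p,\mathbf{E})}$ at the single point $\mathbf{t}$, use $f_\infty(\mathbf{t},p)\sim 1/(\pi^2 p)$ together with Lemma \ref{model-lemma}(i) to get $\mu_{(p,\mathbf{E})}(\{\mathbf{t}\})\sim C/p$, and then apply Proposition \ref{consequence} with $S_p=\{\mathbf{t}\}$ and Mertens' theorem. The extra remarks on the existence of the $\widetilde{\rightarrow}$-limit via the open image theorem for the pair are a welcome (if implicit in the paper) justification, but the argument is the same.
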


\begin{proof}
From the definition of $\mu_{(p, {\bf E}, m)}$ in Section \ref{p-space}, we deduce that
$$\mu_{(p, {\bf E})} ({\bf u}_p)=\lim_{m \widetilde{\rightarrow} \infty} \mu_{(p, {\bf E}, m)}({\bf u}_p)
\sim {\frac{1}{p\pi^2  }\lim_{m \widetilde{\rightarrow} \infty} \frac{m^2 \cdot |{{G_{\bf E}}(m)}_{{\bf u}_p}|  }{|{G_{\bf E}}(m)|}},$$
as $p\rightarrow \infty$.
Thus, considering $\mu_{(p, {\bf E})}$ as $\mu_p$ and $S_p=\{{\bf t}\}$ in Proposition \ref{consequence},  for almost all sequences $({\bf u}_p)\in U$ we have that
$$\#\{p\leq N;~{\bf u}_p= {\bf t} \}\sim \frac{1}{\pi^2 } \lim_{m \widetilde{\rightarrow} \infty} \frac{m^2 \cdot |{{G_{\bf E}}(m)}_{\bf t}  |}{|{G_{\bf E}}(m)|} \sum_{p\leq N} \frac{1}{p},$$
as $N\rightarrow \infty$.
\end{proof}

\section{Proof of Theorem \ref{thrm: ctonetwo}}\label{section: theorem onethree proof}
\begin{proof}[Proof of Theorem \ref{thrm: ctonetwo}]
We first show that in $\eqref{defn: ctonettwo delta}$ we have
\begin{equation}
\label{identity}
\frac{\ell^{2k} \cdot |{\Delta(\ZZ/\ell^k\ZZ)}_{t_1, t_2}|}{| \Delta(\ZZ/\ell^k\ZZ)|}=\frac{\Sts}{(\ell-1)^3(\ell+1)^2\ell^{5k-5}}.
\end{equation}
Note that from \cite[Proposition A.6]{CDKS} we have that
$\#\{g\in \text{GL}_2(\ZZ/\ell^k\ZZ);~ \det(g)=u\}= \ell^{3k-2}(\ell^2-1)$ and thus
\begin{multline}
| \Delta(\ZZ/\ell^k\ZZ)|=\sum_{u\in\zstar}\# \{(g_1,g_2)\in \text{GL}_2(\ZZ/\ell^k\ZZ)\times\text{GL}_2(\ZZ/\ell^k\ZZ);~ \det(g_1)=\det(g_2)=u\}\\
=\sum_{u\in\zstar} \#\{g\in \text{GL}_2(\ZZ/\ell^k\ZZ);~ \det(g)=u\}^2=\varphi(\ell^k)(\ell^{3k-2}(\ell^2-1))^2.\label{deltagl}
\end{multline}
Similarly, 
\begin{align}
|{\Delta(\ZZ/\ell^k\ZZ)}_{t_1, t_2}|=&\sum_{u\in\zstar}\# \{(g_1,g_2)\in \Delta(\ZZ/\ell^k\ZZ);~  \tr(g_1)=t_1\text{ and} \tr(g_2)=t_2 \}\nonumber\\
=&\sum_{u\in\zstar} \mtone\mttwo = \Sts.\label{deltats}
\end{align} 
Thus the identity \eqref{identity} holds by combining $\eqref{deltagl}$ and $\eqref{deltats}$. 

Next let  $\Sk={\Sts}/{\ell^{5k-5}}$. Following the approach of \cite[Theorem 1.6]{DKS}, we show that $\lim_{k\rightarrow \infty} \Sk$ exists for all primes $\ell$ by establishing that $(\Sk)_{k\geq 1}$ is a Cauchy sequence. By abuse of notation let $u$ denote an integer representative of the class $u\in \ZZ/\ell^k \ZZ$. Then, for $i=1, 2$, set $D_i:=t_i^2-4u$. By  
\cite[Theorem 3.2]{DKS}, 
we have 
\begin{equation}
\label{m-definition}
m(t_i, u; \ell^k)=\ell^{2k}+\ell^{2k}\sum_{j=1}^{\min\{k,\nu_\ell(D_i)+1\}}\frac{N_{D_i}(\ell^j)-N_{D_i}(\ell^{j-1})}{\ell^j}, 
\end{equation}
where
$$N_{D_i}(m):=\frac{\#\{x\imod{4m} : x^2\equiv {D_i}\imod{4m}\}}{2}.$$ 
Now for positive integers $r$, $s$ with
 $r>s$, by employing \eqref{m-definition}, we have that 
	\begin{align}
	\mathcal{S}_r&=\frac{\ell^{4r}}{\ell^{5r-5}}\sum_{u \in (\ZZ/\ell^r\ZZ)^*}\prod_{i=1}^2\left(1+\sum_{j_i=1}^{\min\{r,\nu_\ell(D_i)+1\}}\frac{N_{D_i}(\ell^{j_i})-N_{D_i}(\ell^{j_i-1})}{\ell^{j_i}}\right)\nonumber\\
	&=\ell^{5-r}\sum_{u \in (\ZZ/\ell^r\ZZ)^*}\prod_{i=1}^2\left(1+\sum_{j_i=1}^{\min\{s,\nu_\ell(D_i)+1\}}\frac{N_{D_i}(\ell^{j_i})-N_{D_i}(\ell^{j_i-1})}{\ell^{j_i}}+\sum_{j_i=s+1}^{\min\{r,\nu_\ell(D_i)+1\}}\frac{N_{D_i}(\ell^{j_i})-N_{D_i}(\ell^{j_i-1})}{\ell^{j_i}}\right),\label{sknd}
	\end{align}
	where the second sum (the sum starting from $s+1$) is empty if $\nu_\ell(D_j)+1\leq s$.
	We have that the main term of $\eqref{sknd}$ is
	\begin{multline*}
	\ell^{5-r}\sum_{u \in (\ZZ/\ell^r\ZZ)^*}\prod_{i=1}^2\left(1+\sum_{j_i=1}^{\min\{s,\nu_\ell(D_i)+1\}}\frac{N_{D_i}(\ell^{j_i})-N_{D_i}(\ell^{j_i-1})}{\ell^{j_i}}\right)\\
	=\ell^{5-s}\sum_{u \in (\ZZ/\ell^s\ZZ)^*}\prod_{i=1}^2\left(1+\sum_{j_i=1}^{\min\{s,\nu_\ell(D_i)+1\}}\frac{N_{D_i}(\ell^{j_i})-N_{D_i}(\ell^{j_i-1})}{\ell^{j_i}}\right)=\mathcal{S}_s,
	\end{multline*}
	since the inner sum only depends on the condition $D_i \imod{4\ell^s}$. For the second sum in $\eqref{sknd}$, either $\nu_\ell(D_i)+1\leq s$ and the sum is empty or $\min\{\nu_\ell(D_i)+1,r\}\geq s+1$, in which case we use the bound $N_{D_i}(\ell^{j_i})\ll \ell^{j_i/2}.$
	Hence $$\mathcal{S}_r=\mathcal{S}_s+O\left(\ell^{5-r}\sum_{u \in (\ZZ/\ell^r\ZZ)^*}\frac{1}{\ell^{\frac{s+1}2}}\right)=\mathcal{S}_s+o(1),$$ 
as $s\rightarrow \infty$.
We conclude that $(\Sk)_{k\geq 1}$ is a Cauchy sequence and thus $\lim_{k\rightarrow\infty} S(t_1, t_2;\ell^k)/\ell^{5k-5}$ exists.

Finally, in Section \ref{section: DKS}, we will show that 
$$\Delta_\ell: = -1+\frac{1}{(\ell-1)^3(\ell+1)^2}\lim_{k\rightarrow \infty}\frac{\Sts}{\ell^{5k-5}}$$
satisfies the general framework of Theorem 4.2 of \cite{DKS} and thus, by \cite[Formula (6.10)]{DKS},
we have $\Delta_\ell \ll 1/\ell^{3/2}$ for sufficiently large $\ell$. Therefore $c_{t_1, t_2}$ is given by the convergent Euler product in Theorem \ref{thrm: ctonetwo}.
\end{proof}

\section{Proofs of Theorem \ref{samet} and Proposition \ref{proved-cases}}\label{section: ltcomps}
For the duration of this section  we use the following notation.  For integers $t$ and $u$ set $D(t,u):=t^2-4u$. Let $\ell$ be an odd prime or let $\ell=2$ and $t$ be odd. Then for a positive integer $k$, we set 
$$\nu_{\ell,k}(D(t,u)):=\begin{cases}\nu_{\ell}(D(t, u)) &\text{ if }\ell^k\nmid D(t, u),\\ k &\text{ if }\ell^k \mid D(t, u).\end{cases}$$
Also, for $\ell=2$ and even $t$, we set
 $$ \nu_{2,k}(D(t,u)):=\begin{cases}\nu_{2}(D(t, u)) &\text{ if }2^{k+2}\nmid D(t, u),\\ k+2 &\text{ if }2^{k+2} \mid D(t, u).\end{cases}$$ 
 We begin with an explicit representation of the quantity $\mt$ defined in $\eqref{def: mtplk}$.
\begin{thrm}[\textbf{Gekeler and Castryck-Hubrechts}]\label{thrm: ch-thrm-five}
If $\ell>2$, letting $n:= \nu_{\ell, k}(D(t, u))$, we have that
	\begin{equation*}
	\mt=\begin{cases}
	\ell^{2k}+\ell^{2k-1} & \eif  n \emph{\text{ even}}, n<k, \eand \left(\frac{D(t,u)/\ell^n}{\ell}\right)=1, \\
	\ell^{2k}+\ell^{2k-1}-2\ell^{2k-\frac{n}2-1} & \eif n \emph{\text{ even}}, n<k, \eand \left(\frac{D(t,u)/\ell^n}{\ell}\right)=-1, \\
	\ell^{2k}+\ell^{2k-1}-(\ell+1)\ell^{2k-\frac{n +3}2} & \eif n \emph{\text{ odd}} \eand n<k,\\
	\ell^{2k}+\ell^{2k-1}-\ell^{\frac{3k}2+\frac{1-(-1)^k}4-1} & \eif n=k.
	\end{cases}
	\end{equation*}
	If $\ell=2$ and $t$ is odd, we have that $$m(t,u;2^k)=	2^{2k-1}.$$ 	
	If $\ell=2$ and $t$ is even, we set $n:=\nu_{2,k}(D(t, u))$ and we have that
	\begin{equation*}
	m(t,u;2^k)=\begin{cases}
	2^{2k}+2^{2k-1}-2^{\frac{3k}2+\frac{1-(-1)^k}4-1}& \eif n=k+2,\\
	2^{2k}+2^{2k-1}-3\cdot 2^{2k-\frac{n+1}{2}} & \eif   n \eodd \eand n< k+2.
	\end{cases}
	\end{equation*}
	Furthermore, if $0<n<k+2$ is even, we write $r:=D(t,u)/2^n $. In this case we have that 
	\begin{equation*}
	m(t,u;2^k)=\begin{cases}
	2^{2k}+2^{2k-1}-2^\frac{3k-1}2 & \eif n=k+1,\\
	2^{2k}+2^{2k-1}-2^{\frac{3k}2-1}& \eif n=k \eand r\equiv 1\imod{4},\\
	2^{2k}+2^{2k-1}-3\cdot 2^{\frac{3k}2-1}& \eif n=k \eand r\equiv 3\imod{4},\\
	2^{2k}+2^{2k-1}-3\cdot 2^{2k-\frac{n}{2}-1} & \eif n<k \eand r\equiv 3\imod{4},\\
	2^{2k}+2^{2k-1} & \eif n<k \eand r\equiv 1\imod{8},\\
		2^{2k}+2^{2k-1}-2^{2k-\frac{n}{2}} & \eif n<k \eand r\equiv 5\imod{8}.

	\end{cases}
	\end{equation*}
\end{thrm}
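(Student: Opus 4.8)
The statement combines Gekeler's evaluation \cite{G} of $\mt$ for odd $\ell$ with the extension of Castryck and Hubrechts \cite{CH} to $\ell=2$; here is how a unified proof would run. Writing $A=\left(\begin{smallmatrix}a&b\\ c&t-a\end{smallmatrix}\right)$, the conditions $\tr(A)=t$, $\det(A)=u$ become $bc=a(t-a)-u$, so that
$$\mt=\sum_{a\bmod\ell^k}r_k\big(a(t-a)-u\big),\qquad r_k(v):=\#\{(b,c)\in(\ZZ/\ell^k\ZZ)^2:bc=v\}.$$
First one records the elementary fact that $r_k(v)$ depends only on $\min\{\nu_\ell(v),k\}$: it equals $(j+1)(\ell-1)\ell^{k-1}$ when $\nu_\ell(v)=j<k$ and $\ell^k+k(\ell-1)\ell^{k-1}$ when $\ell^k\mid v$ (fix $b$ with $\nu_\ell(b)=i$; then $c$ is determined modulo $\ell^{k-i}$, and one sums over $i$). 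The problem is thereby reduced to controlling the distribution of $\nu_\ell\big(a(t-a)-u\big)$ as $a$ varies, which is a purely local question about the values $s^2-D(t,u)$.

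For $\ell>2$ the substitution $s:=2a-t$ is a bijection of $\ZZ/\ell^k\ZZ$ and $4\big(a(t-a)-u\big)=D(t,u)-s^2$ with $4$ a unit, so $\nu_\ell\big(a(t-a)-u\big)=\nu_\ell(s^2-D(t,u))$. Abel summation over the strata $\nu_\ell(s^2-D(t,u))=j$, together with the values of $r_k$ above, yields the clean identity
$$\mt=(\ell-1)\ell^{k-1}\sum_{j=0}^{k-1}P_j+\ell^k P_k,\qquad P_j:=\#\{s\bmod\ell^k:s^2\equiv D(t,u)\imod{\ell^j}\}.$$
Since $s^2\bmod\ell^j$ depends only on $s\bmod\ell^j$, one has $P_j=\ell^{k-j}\sigma_j$ with $\sigma_j:=\#\{s\bmod\ell^j:s^2\equiv D(t,u)\imod{\ell^j}\}$, and Hensel's lemma evaluates the $\sigma_j$ in terms of $n:=\nu_\ell(D(t,u))$: one gets $\sigma_j=\ell^{\lfloor j/2\rfloor}$ for $j\le n$, $\sigma_j=0$ for $j>n$ when $n$ is odd, and $\sigma_j=\ell^{n/2}\big(1+\left(\tfrac{D(t,u)/\ell^{n}}{\ell}\right)\big)$ for $j>n$ when $n$ is even. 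Substituting these and summing the resulting finite geometric series in $\ell^{-1}$ produces the four cases of the statement, the case $n=k$ being exactly the effect of truncating the valuation at $k$. (Alternatively one can start from \eqref{m-definition}: for odd $\ell$ the factor $4$ there contributes a factor $2$ to the square-root count, which cancels the $\tfrac{1}{2}$ in the definition of $N_{D}$, so $N_{D}(\ell^j)=\sigma_j$ and the same evaluation applies.)

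For $\ell=2$ the two parities of $t$ are handled separately. If $t$ is odd then (as $u$ is a unit) $D(t,u)\equiv 5\imod 8$, hence $s^2-D(t,u)\equiv 4\imod 8$ for every odd $s=2a-t$; therefore $\nu_2\big(a(t-a)-u\big)=\nu_2(s^2-D(t,u))-2=0$ for every $a$, and $\mt=2^k\cdot 2^{k-1}=2^{2k-1}$. If $t$ is even, write $t=2t'$ and substitute $a\mapsto a-t'$ (a bijection needing no division), so that $a(t-a)-u=D(t,u)/4-(a-t')^2$; one then runs the square-root count above verbatim, with $D(t,u)/4$ in place of $D(t,u)$ and $\ell=2$, noting $\nu_2(D(t,u)/4)=n-2$. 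The one genuinely new ingredient is that for $\ell=2$ the square-root counts $\sigma_j$ of a $2$-adic unit are governed not by a Legendre symbol but by congruences modulo $8$ (a unit is a square modulo $2^j$, $j\ge 3$, exactly when it is $\equiv 1\imod 8$); this is the source of the splits according to $r:=D(t,u)/2^{n}$ modulo $8$ and modulo $4$ in the statement, while the regimes $n\in\{k,k+1,k+2\}$ correspond, after the shift by $2$, to the truncation boundaries $\nu_2(D(t,u)/4)\in\{k-2,k-1,k\}$. Collecting cases gives the remaining formulas.

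The routine but delicate part is the $\ell=2$, $t$ even bookkeeping: one must simultaneously track $n=\nu_2(D(t,u))$, the class of the unit part $r=D(t,u)/2^{n}$ modulo $8$ (or $4$), and the three boundary regimes in which the number of square roots modulo $2^k$ saturates, and then verify that the outcome matches the nine-way split in the final display (and degenerates correctly to the $t$ odd formula). Once the valuation distribution of $s^2-D(t,u)$ is understood, everything else is a finite computation.
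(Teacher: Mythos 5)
The paper itself does not prove this theorem: its ``proof'' is a two-line pointer to Gekeler \cite[Theorem 4.4]{G} and Castryck--Hubrechts \cite[Theorem 5]{CH}. Your sketch reconstructs the argument of those sources, and the reconstruction is sound. The reduction $\mt=\sum_{a}r_k(a(t-a)-u)$ is correct, as are the two values of $r_k$ (summing $\varphi(\ell^{k-i})\cdot\ell^i$ over $0\le i\le j$, plus the extra $\ell^k$ from $b=0$ when $\ell^k\mid v$); the Abel-summation identity $\mt=(\ell-1)\ell^{k-1}\sum_{j=0}^{k-1}P_j+\ell^kP_k$ checks out (the boundary term $Q_k(\ell^k+k(\ell-1)\ell^{k-1})$ combines with $(k+1)(\ell-1)\ell^{k-1}Q_k$ to give exactly $\ell^kP_k$); and the Hensel counts $\sigma_j$ are right, including the truncation effect that produces the $n=k$ case (I verified, e.g., that $n$ even with Legendre symbol $1$ gives $\ell^{2k}+\ell^{2k-1}$ and that $n=k$ with $k$ even gives $\ell^{2k}+\ell^{2k-1}-\ell^{3k/2-1}$). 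The $\ell=2$, $t$ odd computation ($D\equiv 5$, $s^2-D\equiv 4\imod 8$, so every $a$ contributes $2^{k-1}$) is also correct, as is the completion of the square $a(t-a)-u=D(t,u)/4-(a-t')^2$ for even $t$. What your route buys is a self-contained proof where the paper offers only a citation (and the citation itself needs the remark, which the authors make, that \cite{CH} is stated for $u$ a prime power but the proof does not use that). The one piece you defer --- the nine-way bookkeeping for $\ell=2$, $t$ even, driven by the fact that a $2$-adic unit is a square mod $2^j$, $j\ge 3$, iff it is $\equiv 1\imod 8$, with four square roots --- is genuinely just a finite verification within your framework, and you flag it honestly; so I see no gap, only an unexecuted routine computation.
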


\begin{proof}
See \cite[Theorem 4.4]{G} for  a proof when $k\geq 2[n/2]+2$. For the general case see \cite[Theorem 5]{CH}, where there is no restriction on $k$. The case considered in \cite{CH} is written for $u$ that is a prime power relatively prime to $\ell$, but the proof is independent of these conditions on $u$. 
\end{proof}

Our next goal is to calculate $\Stt$ by applying Theorem \ref{thrm: ch-thrm-five}. To do this we count the number of $u\in \zstar$ subject to the various conditions stated above. This results in a number of different cases, so we consider the primes $\ell>2$ and $\ell=2$ separately.
\subsection{The case $\ell>2$}
We break this case into two subcases, $\ell \nmid t$ and $\ell \mid t$. In order to calculate $\Stt$ when $(\ell,2t)=1$, we first count the number of $u\in \zstar$ such that $n:= \nu_{\ell, k}(D(t,u))$ is a fixed integer. 

\begin{lem}\label{kroncount}
Let $\ell$ be a prime and let $t$ be an integer such that $(\ell,2t)=1$.  Then for $k\geq 1$ we have that $$\#\Bigg\{u\in \zstar;~ \nu_{\ell, k}(D(t,u))=n \Bigg\}=\begin{cases}
\frac{\varphi(\ell^{k-2i})}2-\ell^{k-1}\chi_{\{ 0 \}} (n)& \eif  n=2i, n<k, \eand \left(\frac{D(t,u)/\ell^n}{\ell}\right)=1,\\
\frac{\varphi(\ell^{k-2i})}2 & \eif  n=2i, n<k, \eand \left(\frac{D(t,u)/\ell^n}{\ell}\right)=-1,\\
	\varphi(\ell^{k-2i-1}) & \eif n=2i+1 \eand n<k,\\
	1 & \eif n=k,
	\end{cases}$$
	where $\chi_{\{ 0 \}} $ is the indicator function taking the value one if $n=0$ and zero otherwise.
\end{lem}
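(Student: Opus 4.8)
The plan is to count, for each value of $n$, the number of $u\in\zstar$ with $\nu_{\ell,k}(D(t,u))=n$, reducing everything to counting solutions of $D(t,u)=t^2-4u$ lying in prescribed $\ell$-adic congruence classes. Since $(\ell,2)=1$, the map $u\mapsto D(t,u)=t^2-4u$ is a bijection from $\ZZ/\ell^k\ZZ$ to $\ZZ/\ell^k\ZZ$ (because $4$ is invertible), and it sends units $u\in\zstar$ to those residues $D$ with $D\not\equiv t^2\imod\ell$. Since $(\ell,t)=1$ as well, $t^2$ is a nonzero square mod $\ell$, so the excluded class $D\equiv 0\imod\ell$ is automatically among the allowed ones — i.e. every $D$ with $\ell\mid D$ corresponds to a unit $u$. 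Thus counting $u\in\zstar$ with a given $\nu_{\ell,k}(D(t,u))$ amounts to counting $D\in\ZZ/\ell^k\ZZ$ with $D\not\equiv t^2\imod\ell$ and the prescribed valuation (and quadratic-residue) behaviour; and when $n\geq 1$ the condition $D\not\equiv t^2\imod\ell$ is vacuous.

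First I would handle the case $n=k$: here $\ell^k\mid D(t,u)$ forces $D(t,u)\equiv 0\imod{\ell^k}$, which has exactly one solution $u=t^2/4$ in $\ZZ/\ell^k\ZZ$, and this $u$ is a unit since $\ell\nmid t$; hence the count is $1$. Next, for $n=2i+1<k$ odd: I need $\nu_\ell(D)=n$, so $D=\ell^n r$ with $r\in\zstar[\ell^{k-n}]$ a unit, but an odd power of $\ell$ times a unit can never be such that... wait — here $\nu_{\ell,k}$ just records $\nu_\ell(D)=n$ with no residue constraint, so $D$ ranges over $\ell^n\cdot(\text{units mod }\ell^{k-n})$, giving $\varphi(\ell^{k-n})=\varphi(\ell^{k-2i-1})$ choices, all corresponding to unit $u$. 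For $n=2i<k$ even: $D=\ell^n r$ with $r$ a unit mod $\ell^{k-n}$, and I further split according to whether $r$ is a square or non-square mod $\ell$ (equivalently whether $\left(\frac{D/\ell^n}{\ell}\right)=\pm1$), each giving $\varphi(\ell^{k-n})/2 = \varphi(\ell^{k-2i})/2$ choices; in the sub-case $n=0$ and $\left(\frac{D}{\ell}\right)=1$ I must additionally remove the forbidden residues $D\equiv t^2\imod\ell$ is itself a square, so I subtract the number of $D$ with $D\equiv t^2 \imod\ell$ from the square-class count, which is $\ell^{k-1}$ residues (those in a fixed nonzero class mod $\ell$), explaining the $-\ell^{k-1}\chi_{\{0\}}(n)$ correction; for $n=0$ with $\left(\frac{D}{\ell}\right)=-1$ no correction is needed since $t^2$ is a square.

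I would then assemble these counts into the four cases of the statement, checking consistency by summing all the counts over $n$ and over the quadratic-residue sign and verifying the total is $\varphi(\ell^k)=\#\zstar$. The main obstacle — really the only subtle point — is bookkeeping the interaction between the unit condition on $u$ and the residue condition on $D$ when $n=0$: one has to be careful that removing $D\equiv t^2\imod\ell$ only affects the square class (since $t^2$ is a QR), and to count correctly the number of $D\in\ZZ/\ell^k\ZZ$ lying in that single residue class mod $\ell$, namely $\ell^{k-1}$. The rest is a routine count of units in quotient rings and an elementary split of units mod $\ell^{k-n}$ into squares and non-squares mod $\ell$.
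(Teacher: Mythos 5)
Your proposal is correct and follows essentially the same route as the paper: a case-by-case count over $n$, using that $u\mapsto t^2-4u$ is a bijection (so the unit condition on $u$ becomes $D\not\equiv t^2\imod{\ell}$, which is vacuous once $\ell\mid D$), splitting the even-valuation case by the quadratic character, and noting that the excluded class $D\equiv t^2\imod{\ell}$ lies entirely in the $n=0$ square case, which produces the $-\ell^{k-1}$ correction. The consistency check that the counts sum to $\varphi(\ell^k)$ is a nice addition but the argument is otherwise the paper's.
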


\begin{proof}
We begin with the fourth case. For $n= k$, since $D(t,u)=t^2-4u$ and $\ell$ is odd, we have that
\begin{equation}
\label{kcase}
\#\Bigg\{u\in \zstar;~  D(t,u)\equiv 0\imod{\ell^k}\Bigg\}=1.
\end{equation}

For the third case we have that $0< n=2i+1 <k$ and thus by employing \eqref{kcase} we have
	\begin{align}
	\#\{u\in \zstar;~ n= 2i+1\}=&~\#\{u\in \zstar;~ D(t,u)\equiv 0 \imod{\ell^{2i+1}}~{\rm and}~D(t,u)\not\equiv 0 \imod{\ell^{2i+2}}\}\nonumber\\
	=&~\ell^{k-2i-1}\cdot\#\{u\in (\ZZ/\ell^{2i+1}\ZZ)^*;~ D(t,u)\equiv 0 \imod{\ell^{2i}}\}\nonumber\\
	&-\ell^{k-2i-2}\cdot\#\{u\in (\ZZ/\ell^{2i+2}\ZZ)^*;~ D(t,u)\equiv 0  \imod{\ell^{2i+2}}\}\nonumber\\
	=&~\ell^{k-2i-1}-\ell^{k-2i-2}=\varphi(\ell^{k-2i-1}).\label{casethree}
	\end{align}

In the second case we have that $0\leq n=2i<k$ and
	\begin{align*}
	&\#\Bigg\{u\in (\ZZ/\ell^k\ZZ)^*;~n=2i<k, \left(\frac{D(t,u)/\ell^{2i}}{\ell}\right)=-1 \Bigg\}\\
		=&~\ell^{k-2i-1}\cdot\#\Bigg\{u\in (\ZZ/\ell^{2i+1}\ZZ)^*;~\left(\frac{D(t,u)/\ell^{2i}}{\ell}\right)=-1, D(t,u)\equiv 0 \imod{\ell^{2i}},~{\rm and}~ D(t,u)\not \equiv 0 \imod{\ell^{2i+1}}\Bigg\}\\
	=&~\ell^{k-2i-1}\cdot\frac{\varphi(\ell)}{2}=\frac{\varphi(\ell^{k-2i})}{2}.
	\end{align*}

Finally for the first case, the argument is similar to the second case. One thing to observe is that for $n=2i=0$,
$$\#\Bigg\{u\in (\ZZ/\ell^{2i+1}\ZZ)^*;~\left(\frac{D(t,u)/\ell^{2i}}{\ell}\right)=1, D(t,u)\equiv 0 \imod{\ell^{2i}},~{\rm and}~ D(t,u)\not \equiv 0 \imod{\ell^{2i+1}}\Bigg\}=\frac{\varphi(\ell)}{2}-1,$$
since $(\tfrac{t^2-4u}{\ell})=1$ if $u=0$ in $\ZZ/\ell^k\ZZ$ which is not allowed.
\end{proof}

By applying Lemma \ref{kroncount} with Theorem \ref{thrm: ch-thrm-five} we obtain a formula for $\Sk$ in the case $(\ell,2t)=1$. We note that $\Sk$ does not stabilize for any value of $k$ in this case.

\begin{lem}\label{lemma: sltcoprime}
Let $t$ be an integer and $\ell$ be a prime such that $(\ell,2t)=1$. Then we have, for $k\geq 1$, that 	
$$\frac{\Stt}{\ell^{5k-5}}=\frac{\ell^2(\ell^4-2\ell^2-3\ell-1)}{\ell+1}-\frac{\ell^4}{\ell^{2k}(\ell+1)}.$$
\end{lem}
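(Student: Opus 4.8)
The plan is to combine the explicit formula for $\mt$ from Theorem \ref{thrm: ch-thrm-five} (in the case $\ell > 2$) with the counting result of Lemma \ref{kroncount}, grouping the sum $\Stt = \sum_{u \in \zstar} m(t,u;\ell^k)^2$ according to the value $n := \nu_{\ell,k}(D(t,u))$ and, when $n < k$ is even, the value of the Legendre symbol $\left(\frac{D(t,u)/\ell^n}{\ell}\right)$. Since $t$ is fixed with $(\ell, 2t) = 1$ and $D(t,u) = D(-t,u)$, the relevant data attached to each $u$ is exactly what Lemma \ref{kroncount} enumerates, so the sum decomposes as a finite sum over $n = 0, 1, \dots, k$ of (multiplicity of $n$) times (value of $m(t,u;\ell^k)^2$ for that $n$ and symbol).

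Concretely, I would write
\begin{equation*}
\Stt = \sum_{\substack{0 \le n \le k \\ n \text{ even}}} \ \sum_{\epsilon \in \{1,-1\}} c_{n,\epsilon}\, m_{n,\epsilon}^2 \ + \sum_{\substack{0 \le n < k \\ n \text{ odd}}} c_n\, m_n^2 \ + \ c_k\, m_k^2,
\end{equation*}
where the $c$'s are the cardinalities from Lemma \ref{kroncount} (namely $\tfrac{\varphi(\ell^{k-2i})}{2} - \ell^{k-1}\chi_{\{0\}}(n)$, $\tfrac{\varphi(\ell^{k-2i})}{2}$, $\varphi(\ell^{k-2i-1})$, and $1$ respectively) and the $m$'s are the four corresponding values of $\mt$ from Theorem \ref{thrm: ch-thrm-five}. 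Then I would expand each $m^2$, substitute $\varphi(\ell^j) = \ell^{j-1}(\ell-1)$, and sum the resulting geometric-type series in $i$. The even-$n$ terms contribute partial sums of a geometric series in $\ell^{-4i}$ (from the factor $\varphi(\ell^{k-2i})$ against $\ell^{4k}$-ish leading terms, divided by $\ell^{5k-5}$), the odd-$n$ terms another such series, and the $n = k$ boundary term a single explicit contribution; the $\chi_{\{0\}}$ correction accounts for the excluded value $u = 0$. After dividing by $\ell^{5k-5}$ and collecting, everything should telescope into the claimed closed form $\tfrac{\ell^2(\ell^4 - 2\ell^2 - 3\ell - 1)}{\ell+1} - \tfrac{\ell^4}{\ell^{2k}(\ell+1)}$; the $k$-dependent tail $-\tfrac{\ell^4}{\ell^{2k}(\ell+1)}$ is exactly the remainder of the truncated geometric series, consistent with the paper's remark that $\Sk$ does not stabilize here but converges as $k \to \infty$.

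The main obstacle I anticipate is purely bookkeeping: there are several interacting sums (even $n$ split by the symbol, odd $n$, and the $n=k$ term, with a further parity subtlety in the exponent $\tfrac{3k}{2} + \tfrac{1-(-1)^k}{4} - 1$ when $n = k$), and one must be careful that the leading $\ell^{2k} + \ell^{2k-1}$ part of $\mt$, when squared and summed against the full count $\sum_n c_n = \varphi(\ell^k)$, produces the dominant $(\ell^2+\ell)^2 \varphi(\ell^k)$ contribution, while all the lower-order corrections in $\mt$ (the $-2\ell^{2k-n/2-1}$, $-(\ell+1)\ell^{2k-(n+3)/2}$, and $-\ell^{3k/2+\cdots}$ terms, plus their squares and cross terms) must be tracked to see which survive after division by $\ell^{5k-5}$ and which vanish as $k \to \infty$ versus contribute to the $\ell^{-2k}$ tail. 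A useful sanity check along the way is to verify the identity against small cases (e.g. $k = 1$, where by Theorem \ref{thrm: ch-thrm-five} with $n = 0$ one has $\mt = \ell^2 + \ell$ or $\ell^2 + \ell - 2\ell$ depending on the symbol, and the count is immediate) and against the $\ell$-factor of $c_t$ recorded in the introduction. I would also double-check that the $n = k$ term, being of size $O(\ell^{3k/2})$ squared $= O(\ell^{3k})$ with multiplicity $1$, is negligible after dividing by $\ell^{5k-5}$, so it does not affect the stated formula.
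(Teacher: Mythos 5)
Your proposal is correct and essentially identical to the paper's proof: the paper writes out exactly the decomposition you describe (its equation \eqref{eqn: scoprimebig}), weighting the four cases of Theorem \ref{thrm: ch-thrm-five} by the counts from Lemma \ref{kroncount} and then simplifying the resulting expression directly (or with computer algebra). One caution on your final sanity check: the $n=k$ boundary term is \emph{not} negligible for the exact identity --- its squared correction $\ell^{2\left(\frac{3k}{2}+\frac{1-(-1)^k}{4}-1\right)}$ contributes at order $\ell^{3-2k}$ or $\ell^{4-2k}$ after division by $\ell^{5k-5}$, which is precisely the order of the tail $-\ell^{4}/(\ell^{2k}(\ell+1))$, so it must be retained in the bookkeeping and only disappears in the limit $k\to\infty$.
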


\begin{proof}
From Theorem \ref{thrm: ch-thrm-five} and Lemma \ref{kroncount} we have that 
\begin{align}
\Stt=&\left(\frac{\varphi(\ell^k)}{2}-\ell^{k-1}+\sum_{\substack{1\leq i< \frac k2}}\frac{\varphi(\ell^{k-2i})}{2}\right)\left(\ell^{2k}+\ell^{2k-1}\right)^2+\sum_{0\leq i< \frac k2}\frac{\varphi(\ell^{k-2i})}{2}\left(\ell^{2k}+\ell^{2k-1}-2\ell^{2k-i-1}\right)^2\nonumber\\
&+\sum_{0\leq i <\frac{k-1}2}\varphi(\ell^{k-2i-1})\left(\ell^{2k}+\ell^{2k-1}-(\ell+1)\ell^{2k-i-2}\right)^2+\left(\ell^{2k}+\ell^{2k-1}-\ell^{\frac{3k}2+\frac{1-(-1)^k}4-1}\right)^2.\label{eqn: scoprimebig}
\end{align}
We can perform the tedious calculation directly or employ a symbolic calculation software such as MAPLE  to simplify $\eqref{eqn: scoprimebig}$ to obtain the result.
\end{proof}
We now consider the case when $\ell\mid t$  and $\ell>2$. More generally, in this case,  we obtain  formulas for $\mathcal{S}_k$  for $t_1$ and $t_2$, where $\ell \mid t_1t_2$.

\begin{lem}\label{lemma: ldivides t}
	Let $\ell>2$ be a prime, let $t_1,t_2$ be integers such that $\ell \mid  (t_1,t_2)$. Then for $k\geq 1$ we have that 
	$$\frac{\Sts}{\ell^{5k-5}}=\ell^2(\ell^2+1)(\ell-1).$$
\end{lem}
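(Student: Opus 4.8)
The plan is to apply Theorem~\ref{thrm: ch-thrm-five} directly, but the key preliminary observation is that the hypothesis $\ell \mid (t_1, t_2)$ forces a dramatic simplification of the valuation structure of $D(t_i, u) = t_i^2 - 4u$. Indeed, since $\ell \mid t_i$ and $u \in \zstar$ (so $\ell \nmid u$), we have $\nu_\ell(t_i^2) \geq 2$ but $\nu_\ell(4u) = 0$ for $\ell > 2$; hence $\nu_\ell(D(t_i, u)) = 0$ for every $u \in \zstar$ and every $i$. So in the notation of Theorem~\ref{thrm: ch-thrm-five} we are always in the case $n = 0$ (which is even and $< k$ for $k \geq 1$), and the value of $\mt$ depends only on whether $\left(\frac{D(t_i, u)}{\ell}\right) = \left(\frac{-4u}{\ell}\right)$ equals $+1$ or $-1$. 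Since $-4$ is a fixed nonzero residue, as $u$ ranges over $\zstar$ the Legendre symbol $\left(\frac{-4u}{\ell}\right)$ is a nontrivial character in $u$, so it takes the value $+1$ on exactly $\varphi(\ell^k)/2$ classes and $-1$ on the other $\varphi(\ell^k)/2$ classes; moreover $\left(\frac{-4u_1}{\ell}\right) = \left(\frac{-4u_2}{\ell}\right)$ precisely when $u_1/u_2$ is a square, so the joint distribution of the pair of signs over $u$ splits cleanly.

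\textbf{Main computation.} With $n = 0$, Theorem~\ref{thrm: ch-thrm-five} gives $\mt = \ell^{2k} + \ell^{2k-1}$ when $\left(\frac{-4u}{\ell}\right) = 1$ and $\mt = \ell^{2k} + \ell^{2k-1} - 2\ell^{2k-1}= \ell^{2k} - \ell^{2k-1}$ when $\left(\frac{-4u}{\ell}\right) = -1$ (using $n/2 = 0$ in the exponent $2k - n/2 - 1$). Therefore
\begin{equation*}
\Sts = \sum_{u \in \zstar} \mtone\, \mttwo = \sum_{\varepsilon_1, \varepsilon_2 \in \{\pm 1\}} N(\varepsilon_1, \varepsilon_2)\, \big(\ell^{2k} + \varepsilon_1 \ell^{2k-1}\big)\big(\ell^{2k} + \varepsilon_2 \ell^{2k-1}\big),
\end{equation*}
where $N(\varepsilon_1, \varepsilon_2) = \#\{u \in \zstar : \left(\tfrac{-4 t_1^{-2} \cdot t_1^2 u}{\ell}\right)\dots\}$ — more precisely $N(\varepsilon_1, \varepsilon_2)$ counts $u \in \zstar$ with $\left(\frac{-4u}{\ell}\right) = \varepsilon_1$ and $\left(\frac{-4u}{\ell}\right) = \varepsilon_2$; but since the two conditions involve the \emph{same} symbol $\left(\frac{-4u}{\ell}\right)$, we get $N(+,+) = N(-,-) = \varphi(\ell^k)/2$ and $N(+,-) = N(-,+) = 0$. (Here I should double-check that $D(t_1,u)$ and $D(t_2,u)$ have the same Legendre symbol: both equal $\left(\frac{-4u}{\ell}\right)$ since $\ell \mid t_1$ and $\ell \mid t_2$, so indeed the symbols coincide.) Hence
\begin{equation*}
\Sts = \frac{\varphi(\ell^k)}{2}\Big[\big(\ell^{2k}+\ell^{2k-1}\big)^2 + \big(\ell^{2k}-\ell^{2k-1}\big)^2\Big] = \frac{\ell^{k-1}(\ell-1)}{2}\cdot 2\ell^{4k}(1 + \ell^{-2}) = \ell^{5k-5}\cdot \ell^2(\ell^2+1)(\ell-1),
\end{equation*}
after collecting powers of $\ell$. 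Dividing by $\ell^{5k-5}$ yields the claimed value $\ell^2(\ell^2+1)(\ell-1)$ for all $k \geq 1$, which in particular is independent of $k$, consistent with the stabilization remark after Theorem~\ref{samet}.

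\textbf{Anticipated obstacle.} The computation itself is routine arithmetic; the one point requiring genuine care is the claim that $\left(\frac{D(t_1,u)}{\ell}\right) = \left(\frac{D(t_2,u)}{\ell}\right)$ for all $u \in \zstar$, i.e.\ that the two trace conditions are not independent. This is exactly what makes the $\ell \mid (t_1,t_2)$ case so clean (and is the feature that breaks down in the general $t_1 \neq \pm t_2$ case treated computationally later): when both traces are divisible by $\ell$, the discriminants $t_i^2 - 4u$ are both $\equiv -4u \pmod{\ell}$, collapsing the double sum over sign patterns to a single diagonal sum. I would also verify the edge exponent: when $k = 1$ the sum $\sum_{1 \le i < k/2}$ and the $n = k$ term in Lemma~\ref{lemma: sltcoprime}-style expansions are vacuous or must be handled separately, but here since $n = 0 < k$ always and $n = k$ never occurs (as $k \geq 1$ and $D \not\equiv 0$), no such separate bookkeeping is needed. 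The only genuinely new input beyond Theorem~\ref{thrm: ch-thrm-five} is this parity/character observation, after which MAPLE-style simplification (or the three-line hand computation above) finishes the proof.
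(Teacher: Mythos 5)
Your proof is correct and follows essentially the same route as the paper's: observe that $\ell\mid(t_1,t_2)$ and $u\in\zstar$ force $\nu_{\ell,k}(D(t_i,u))=0$ with $\left(\frac{D(t_i,u)}{\ell}\right)=\left(\frac{-4u}{\ell}\right)$ for both $i$, so the sum splits into the two classes of size $\varphi(\ell^k)/2$ and the arithmetic gives $\ell^{5k-3}(\ell^2+1)(\ell-1)$. The only blemish is the garbled first attempt at defining $N(\varepsilon_1,\varepsilon_2)$, which you immediately correct; the mathematics is sound.
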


\begin{proof}
Since $\ell \mid (t_1,t_2)$, then 
for all $u\in \zstar$ we have that $\nu_{\ell,k}(D(t_i,u))=0$ and hence $\left(\frac{D(t_i,u)}{\ell}\right)\neq 0$ for $i=1,2$. It remains to count the number of $u$'s for which  $\left(\frac{-4u}{\ell}\right)=1$ (-1, respectively).  From Theorem \ref{thrm: ch-thrm-five} we have that
	\begin{align*}
	\Sts&=\sum_{\substack{u\in \zstar \\ \left(\frac{-4u}{\ell}\right)=1 }}\mtone\mttwo+\sum_{\substack{u\in \zstar \\ \left(\frac{-4u}{\ell}\right)=-1 }}\mtone\mttwo\\
	&= \frac{\varphi(\ell^k)}{2}\left(\ell^{2k}+\ell^{2k-1}\right)^2+\frac{\varphi(\ell^k)}{2}\left(\ell^{2k}-\ell^{2k-1}\right)^2=\ell^{5k-3}(\ell^2+1)(\ell-1).
	\end{align*} 
\end{proof}

\begin{lem}\label{lemma: ldivides t_1}
	Let $\ell>2$ be a prime, let $t_1,t_2$ be integers such that $\ell \mid  t_1$ and $\ell \nmid t_2$. Then for $k\geq 1$ we have that 
	$$\frac{\Sts}{\ell^{5k-5}}=\ell^2(\ell^2-1)(\ell-1).$$
\end{lem}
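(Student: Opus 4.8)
The plan is to mimic the proof of Lemma \ref{lemma: ldivides t} very closely, since the only change is that now $\ell \nmid t_2$ while $\ell \mid t_1$. First I would observe that since $\ell \mid t_1$, for every $u \in \zstar$ we have $\nu_{\ell,k}(D(t_1,u)) = \nu_{\ell,k}(t_1^2 - 4u) = 0$, because $\ell \nmid 4u$, so $\left(\frac{D(t_1,u)}{\ell}\right) \in \{1, -1\}$ and in fact $\left(\frac{D(t_1,u)}{\ell}\right) = \left(\frac{-4u}{\ell}\right) = \left(\frac{-u}{\ell}\right)$. Hence by Theorem \ref{thrm: ch-thrm-five}, $m(t_1, u; \ell^k) = \ell^{2k} + \ell^{2k-1}$ when $\left(\frac{-u}{\ell}\right) = 1$ and $m(t_1, u; \ell^k) = \ell^{2k} + \ell^{2k-1} - 2\ell^{2k-1} = \ell^{2k} - \ell^{2k-1}$ when $\left(\frac{-u}{\ell}\right) = -1$ (this is the $n = 0$ case of the first two branches).

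Next I would handle the $t_2$ factor. Since $\ell \nmid t_2$, the quantity $m(t_2, u; \ell^k)$ depends on $u$ through $n = \nu_{\ell,k}(D(t_2,u))$ in a nontrivial way, so I cannot just pull it out. Instead I would split the sum $\Sts = \sum_{u \in \zstar} m(t_1,u;\ell^k) m(t_2,u;\ell^k)$ according to whether $\left(\frac{-u}{\ell}\right) = 1$ or $-1$, writing
$$\Sts = (\ell^{2k} + \ell^{2k-1}) \sum_{\substack{u \in \zstar \\ \left(\frac{-u}{\ell}\right) = 1}} m(t_2,u;\ell^k) + (\ell^{2k} - \ell^{2k-1}) \sum_{\substack{u \in \zstar \\ \left(\frac{-u}{\ell}\right) = -1}} m(t_2,u;\ell^k).$$
The key point is that the map $u \mapsto u'$ defined so that $D(t_2, u') = c \cdot D(t_2,u)$ for a fixed non-residue $c$... actually a cleaner route: I would show that $\sum_{u: (\frac{-u}{\ell})=1} m(t_2,u;\ell^k)$ and $\sum_{u: (\frac{-u}{\ell})=-1} m(t_2,u;\ell^k)$ are in fact equal. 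This should follow because the condition $\nu_{\ell,k}(D(t_2,u)) = n$ together with the value of $\left(\frac{D(t_2,u)/\ell^n}{\ell}\right)$ partitions $\zstar$ in a way that is "balanced" with respect to the residue class of $-u$ modulo $\ell$ — more precisely, for each relevant $n$ and each choice of Legendre symbol value, the count of such $u$ with $\left(\frac{-u}{\ell}\right) = 1$ equals the count with $\left(\frac{-u}{\ell}\right) = -1$. This is because $D(t_2, u) = t_2^2 - 4u$ is an affine function of $u$ with $\ell \nmid 4$, so as $u$ ranges over a congruence class mod a power of $\ell$, the residue $-u \bmod \ell$ ranges uniformly (once the power is at least $\ell$), and the Legendre symbol $\left(\frac{-u}{\ell}\right)$ is equidistributed on the fibers.

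Granting that balance, the two inner sums are equal, each equal to $\frac{1}{2}\sum_{u \in \zstar} m(t_2,u;\ell^k) = \frac{1}{2}|\GL_2(\ZZ/\ell^k\ZZ)_{t_2}| \cdot (\text{something})$; more directly, $\sum_{u \in \zstar} m(t_2,u;\ell^k) = |\{A \in \mtwolk : \tr A = t_2, \det A \in \zstar\}| = |\GL_2(\ZZ/\ell^k\ZZ)_{t_2}|$. Then
$$\Sts = \left[(\ell^{2k} + \ell^{2k-1}) + (\ell^{2k} - \ell^{2k-1})\right] \cdot \frac{1}{2}|\GL_2(\ZZ/\ell^k\ZZ)_{t_2}| = \ell^{2k} \cdot |\GL_2(\ZZ/\ell^k\ZZ)_{t_2}|.$$
Using the known count $|\GL_2(\ZZ/\ell^k\ZZ)_{t_2}| = \ell^{3k-3}(\ell^2-1)(\ell-1) \cdot (\text{a factor depending on } \ell\nmid t_2)$ — which one can extract from the single-curve Lang-Trotter computation, recalling $c_t$'s $\ell$-factor for $\ell \nmid t$ — and dividing by $\ell^{5k-5}$ should yield exactly $\ell^2(\ell^2-1)(\ell-1)$. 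Alternatively, and more self-containedly, I would just invoke Theorem \ref{thrm: ch-thrm-five} and Lemma \ref{kroncount} directly as in the proof of Lemma \ref{lemma: sltcoprime}: the $t_1$-factor contributes a constant $\ell^{2k} \pm \ell^{2k-1}$ on the two halves, and summing against the $t_2$-counts from Lemma \ref{kroncount} gives a single clean telescoping computation which MAPLE (or a direct geometric-series manipulation) simplifies. I expect the main obstacle to be justifying the equidistribution/balance claim cleanly — showing that the Legendre symbol $\left(\frac{-u}{\ell}\right)$ is independent of the higher-$\ell$-adic data governing $\nu_{\ell,k}(D(t_2,u))$ — but this is elementary: it reduces to the observation that fixing $u$ modulo $\ell^{j}$ for $j \geq 1$ determines $-u \bmod \ell$, and the two constraints ($u \bmod \ell$ and the valuation-type condition on $t_2^2 - 4u$) decouple because $\ell \nmid t_2$ forces $D(t_2,u) \not\equiv 0 \pmod \ell$ to be compatible with any residue of $u$. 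Once that is in hand the rest is the same routine simplification already used in Lemmas \ref{lemma: sltcoprime} and \ref{lemma: ldivides t}.
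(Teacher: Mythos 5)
Your reduction of the $t_1$-factor to $m(t_1,u;\ell^k)=\ell^{2k}\pm\ell^{2k-1}$ according to $\left(\tfrac{-u}{\ell}\right)=\pm1$ is correct, but the pivotal ``balance'' claim --- that $\sum_{u:\,(\frac{-u}{\ell})=1}m(t_2,u;\ell^k)$ equals $\sum_{u:\,(\frac{-u}{\ell})=-1}m(t_2,u;\ell^k)$ --- is false, and the decoupling argument offered for it does not work. The two conditions are strongly correlated: if $\ell\mid D(t_2,u)=t_2^2-4u$ then $u\equiv(t_2\cdot 2^{-1})^2\pmod{\ell}$ is a \emph{fixed nonzero square}, so every such $u$ has $\left(\tfrac{-u}{\ell}\right)=\left(\tfrac{-1}{\ell}\right)$; far from being equidistributed, the entire ``$n\geq1$'' mass sits on one side, and which side depends on $\ell\bmod 4$. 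Even on the set where $\ell\nmid D(t_2,u)$, the joint counts of the pair $\bigl(\left(\tfrac{-u}{\ell}\right),\left(\tfrac{D(t_2,u)}{\ell}\right)\bigr)$ are governed by the character sum $\sum_u\left(\tfrac{-u(t_2^2-4u)}{\ell}\right)=-\left(\tfrac{-1}{\ell}\right)\neq 0$, so they too are biased at the scale $\ell^{k-1}$. This is precisely why the paper's proof computes the full joint counts $f(j_1,j_2,n_1,n_2)$ and splits into the cases $\ell\equiv1$ and $\ell\equiv 3\pmod 4$. Numerically, for $\ell=5$, $t_2=1$, $k=1$ the two half-sums are $45$ and $50$; in general their difference equals $-\ell^{3k-2}$, so your intermediate identity $\Sts=\ell^{2k}\,|{\GL_2(\ZZ/\ell^k\ZZ)}_{t_2}|$ is off by $\ell^{2k-1}\cdot\ell^{3k-2}=\ell^{5k-3}$, i.e.\ by $\ell^2$ after dividing by $\ell^{5k-5}$.

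One further caution. Carrying your route to the end with $|{\GL_2(\ZZ/\ell^k\ZZ)}_{t_2}|=\ell^{3k-2}(\ell^2-\ell-1)$ would give $\ell^3(\ell^2-\ell-1)$, while the computation that accounts for the bias gives $\ell^2(\ell^3-\ell^2-\ell-1)$; neither equals the value $\ell^2(\ell^2-1)(\ell-1)=\ell^2(\ell^3-\ell^2-\ell+1)$ displayed in the lemma. Direct matrix counts confirm $S(0,1;3)=126=3^2(3^3-3^2-3-1)$ and $S(0,1;5)=2350=5^2(5^3-5^2-5-1)$, and $\ell^2(\ell^3-\ell^2-\ell-1)$ is also what Proposition \ref{proved-cases} records for the case $\ell\mid t_1t_2$, $\ell\nmid(t_1,t_2)$, where $(\tfrac{(t_1,t_2)}{\ell})^2=1$. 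So the target you are aiming at appears to carry a sign slip, and in any case the balance claim must be replaced by the actual joint counts before the computation can be completed.
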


\begin{proof}

For $r=1, 2$, $1\leq j \leq 4$, $k\geq 1$, and $0\leq n \leq k$, let $g_r(j, n)$ be the formula for $m(t_r, u;\ell^k)$ in Theorem \ref{thrm: ch-thrm-five} corresponding to the $j$-th condition for a fixed value of $n$. For example  $g_r(2, n)= \ell^{2k}+\ell^{2k-1}-2\ell^{2k-\frac{n}{2}-1}$, which corresponds to the second condition (i.e. $n$ even, $n<k$, and $\left( \frac{D(t_r, u)/\ell^n}{\ell} \right)=-1$). Using this notation we have
\begin{equation}
\label{S-formula}
S(t_1, t_2; \ell^k)=\sum_{j_1, j_2, n_1, n_2} f(j_1, j_2, n_1, n_2) g_1(j_1, n_1) g_2(j_2, n_2),
\end{equation}
where 
$$ f(j_1, j_2, n_1, n_2)= \#\{u\in (\ZZ/\ell^k\ZZ)^*;~\textrm{Condition }(j_r)~ \textrm{with value}~ n_r~ \textrm{holds for } D(t_r, u),~{\rm for}~r=1, 2\}.$$
Observe that since $\ell\mid t_1$ and $u\in (\ZZ/\ell^k\ZZ)^*$, we have $n_1=\nu_{\ell ,k}(D(t_1, u))=0$. Thus $f(j_1, j_2, n_1, n_2)=0$ for $j_1=3, 4$ or $n_1\neq 0$.

Now suppose that $\ell\equiv 1\imod{4}$.  Then $f(2, 3, 0, n_2)=f(2, 4, 0, n_2)=0$. This is true since in these cases $t_2^2-4u\equiv 0 \imod{\ell^n_2}$ and thus $4u$ is a quadratic residue mod $\ell$. Since $\left(\tfrac{-1}{\ell}\right)=1$ then $\left(\tfrac{t_1^2-4u}{\ell}\right)=\left(\tfrac{-4u}{\ell}\right)=1\neq -1$. A similar argument shows that in this case $f(2, 1, 0, n_2)=f(2, 2, 0, n_2)=0$ for $n_2\neq 0$ as well. 

Following arguments similar to Lemma \ref{kroncount} we can show that, for $i, k>0$, 
\begin{multline*}
f(1, 1, 0, 0)=\frac{\varphi(\ell^k)}{4}-\ell^{k-1},~f(1, 1, 0, 2i)= \frac{\varphi(\ell^{k-2i})}{2},~f(1, 2, 0, 0)=\frac{\varphi(\ell^k)}{4},~f(1, 2, 0, 2i)= \frac{\varphi(\ell^{k-2i})}{2},\\
f(1, 3, 0, 2i+1)={\varphi(\ell^{k-2i-1})},~f(1, 4, 0, k)= 1,~f(2, 1, 0, 0)=\frac{\varphi(\ell^k)}{4},~{\rm and}~f(2, 2, 0, 0)= \frac{\varphi(\ell^{k})}{4}. \end{multline*}
Applying the above in \eqref{S-formula} yields
\begin{align}
S(t_1, t_2; \ell^k)=&\left(\frac{\varphi(\ell^k)}{4}-\ell^{k-1}+\sum_{\substack{1\leq i< \frac k2}}\frac{\varphi(\ell^{k-2i})}{2}\right)\left(\ell^{2k}+\ell^{2k-1}\right)^2+
\frac{\varphi(\ell^k)}{4}\left(\ell^{2k}+\ell^{2k-1}\right)\left(\ell^{2k}-\ell^{2k-1}\right)\nonumber\\
&+\sum_{0\leq i< \frac k2}\frac{\varphi(\ell^{k-2i})}{2}   
\left(\ell^{2k}+\ell^{2k-1}\right)
\left(\ell^{2k}+\ell^{2k-1}-2\ell^{2k-i-1}\right)\nonumber\\
&+\sum_{0\leq i <\frac{k-1}2}\varphi(\ell^{k-2i-1})\left(\ell^{2k}+\ell^{2k-1}\right)\left(\ell^{2k}+\ell^{2k-1}-(\ell+1)\ell^{2k-i-2}\right)\nonumber\\
&+
\left(\ell^{2k}+\ell^{2k-1}\right)\left(\ell^{2k}+\ell^{2k-1}-\ell^{\frac{3k}2+\frac{1-(-1)^k}4-1}\right)+
\frac{\varphi(\ell^k)}{4}\left(\ell^{2k}-\ell^{2k-1}\right)\left(\ell^{2k}+\ell^{2k-1}\right)\nonumber\\
&+\frac{\varphi(\ell^k)}{4}\left(\ell^{2k}-\ell^{2k-1}\right)^2.\nonumber
\end{align}
Simplifying this expression verifies the result.  The proof  for  $\ell\equiv 3\imod{4}$ is similar, considering the fact that in this case the non-zero values of $f(j_1, j_2, n_1, n_2)$ are
\begin{multline*}
f(1, 1, 0, 0)=\frac{\varphi(\ell^k)}{4}-\frac{\ell^{k-1}}{2},~f(1, 2, 0, 0)= \frac{\varphi(\ell^k)}{4}+\frac{\ell^{k-1}}{2},~f(2, 1, 0, 0)=\frac{\varphi(\ell^k)}{4}-\frac{\ell^{k-1}}{2},~f(2, 1, 0, 2i)= \frac{\varphi(\ell^{k-2i})}{2},\\
f(2, 2, 0, 0)=  \frac{\varphi(\ell^k)}{4}-\frac{\ell^{k-1}}{2}    ,~f(2, 2, 0, 2i)= \frac{\varphi(\ell^{k-2i})}{2},~f(2, 3, 0, 2i+1)={\varphi(\ell^{k-2i-1})},~{\rm and}~f(2, 4, 0, k)= 1. \end{multline*}
\end{proof}

\subsection{The case $\ell=2$}

The situation is more complicated when $\ell=2$ and there are several different cases to consider. We first consider cases when $t$ is even and $4\nmid t$. 
\begin{lem}\label{lemma: teven}
Let $t$ be an even integer such that $4\nmid t$. Then for $k\geq 1$  we have that 
 \begin{equation}
 \label{cases}
 \#\{u\in \zstartwo;~ {\nu}_{2, k}(D(t,u))=n \}=\begin{cases}
 1 & \eif n=k+2,\\
\varphi(2^{k-2i+1}) &\eif n=2i+1, n<k+2,\\
1 & \eif n=2i=k+1,\\
1 &\eif n=2i=k \eand \tfrac{D(t,u)}{2^{2i}}\equiv 1\imod{4},\\
1 &\eif n=2i=k \eand \tfrac{D(t,u)}{2^{2i}}\equiv 3 \imod{4},\\
\varphi(2^{k-2i+1}) & \eif 0<n=2i<k \eand \tfrac{D(t,u)}{2^{2i}}\equiv 3 \imod{4}, \\
\varphi(2^{k-2i}) & \eif 0<n=2i<k \eand \tfrac{D(t,u)}{2^{2i}}\equiv 1 \imod{8},\\
\varphi(2^{k-2i}) & \eif 0<n=2i<k \eand \tfrac{D(t,u)}{2^{2i}}\equiv 5 \imod{8}.\\
\end{cases}
\end{equation}
\end{lem}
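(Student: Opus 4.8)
The plan is to imitate the proof of Lemma \ref{kroncount}, using the factorization of $D(t,u)$ forced by $\nu_2(t)=1$. Since $t$ is even with $4\nmid t$, write $t=2t'$ with $t'$ odd; then
\[
D(t,u)=t^2-4u=4(t'^2-u),
\]
so that $\nu_2(D(t,u))=2+\nu_2(t'^2-u)$ and $D(t,u)/2^{\nu_2(D(t,u))}=(t'^2-u)/2^{\nu_2(t'^2-u)}$. As $u$ runs over the $\varphi(2^k)=2^{k-1}$ odd residues in $\zstartwo$ and $t'^2$ is odd, the map $u\mapsto v:=t'^2-u$ is a bijection from $\zstartwo$ onto the set of even residues modulo $2^k$, which also has $2^{k-1}$ elements. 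Thus it suffices to count even residues $v\pmod{2^k}$ by the value of $\nu_2(v)$ and, in the relevant cases, by the residue of $v/2^{\nu_2(v)}$ modulo $4$ or $8$, and then translate back via $n=\nu_2(v)+2$ and $D(t,u)/2^n=v/2^{\nu_2(v)}$.

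For the underlying count, fix $m\geq 1$: the number of even $v\pmod{2^k}$ with $\nu_2(v)=m$ is $\varphi(2^{k-m})$ when $1\leq m<k$, and there is exactly one even $v$ (namely $v\equiv 0$) divisible by $2^k$. In the former case, writing $v=2^m w$, the odd class $w$ runs exactly once over all odd residues modulo $2^{k-m}$; hence precisely half of these $v$ have $w\equiv 3\imod{4}$ when $k-m\geq 2$, and precisely a quarter have $w\equiv 1\imod{8}$ (respectively $w\equiv 5\imod{8}$) when $k-m\geq 3$. Recording these fractions with $\varphi(2^{k-m})/2=\varphi(2^{k-m-1})$ and $\varphi(2^{k-m})/4=\varphi(2^{k-m-2})$ produces all the quantities appearing on the right of \eqref{cases}.

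It remains to match up cases. Because $v$ is even we always have $\nu_2(D(t,u))=m+2\geq 3$, so $n=1$ (and $n=2$) cannot occur, which is why no such line appears in \eqref{cases}. The truncation in the definition of $\nu_{2,k}$ caps $n$ at $k+2$: the value $n=k+2$ corresponds to $2^{k+2}\mid D(t,u)$, i.e. $2^k\mid v$, giving the unique solution $u=t'^2$; the value $n=k+1$ (possible only when $k$ is odd) forces $\nu_2(v)=k-1$, hence $v=2^{k-1}$; the value $n=k$ forces $\nu_2(v)=k-2$, so that $w=D(t,u)/2^k$ takes each of the classes $1,3$ modulo $4$ once; and for $n<k$ one has $\nu_2(v)=n-2<k-2$ and the modulo $4$ and modulo $8$ subcases split as in the previous paragraph. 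Writing $n=2i$ or $n=2i+1$ and rewriting $\varphi(2^{k-m})$ in terms of $i$ and $k$ then yields \eqref{cases} line by line.

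The argument is elementary once the reduction is in place; the only delicate point is the bookkeeping at the transitional values $n\in\{k,k+1,k+2\}$, where the cap in $\nu_{2,k}$ and the mod $4$/mod $8$ conditions interact with the ambient modulus $2^k$ and one must track the parity of $k$.
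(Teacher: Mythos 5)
Your proof is correct and follows essentially the same route as the paper: writing $t=2t'$ with $t'$ odd, reducing to the $2$-adic valuation of $t'^2-u$, and counting residues with a fixed valuation together with their classes modulo $4$ and $8$. The reformulation via the bijection $u\mapsto t'^2-u$ onto the even residues modulo $2^k$ is a slightly cleaner bookkeeping device, but the underlying computation and case analysis are the same as in the paper's proof.
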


\begin{proof}
We set $t:=2m$ with $m$ odd and thus, $D(t,u)=4(m^2-u)$. We now consider cases.

In the first case, if $\nu_{2, k}(D(t, u))=k+2$, then $2^k \mid m^2-u$. Therefore, 
$$\#\{u\in (\ZZ/2^k\ZZ)^*;~ \nu_{2, k}(D(t, u))=k+2\}=\#\{u\in (\ZZ/2^k\ZZ)^*;~u\equiv m^2\imod{2^k}\}=1.$$

Next note that if $\nu_{2, k}(D(t, u))=n<k+2$, we have 
\begin{multline}
\label{general-n}
\#\{u\in (\ZZ/2^k\ZZ)^*;~ \nu_{2, k}(D(t, u))=n\}= \\2^{k-(n-2)} \#\{u\in (\ZZ/2^{n-2}\ZZ)^*;~u\equiv m^2 \imod{2^{n-2}}\}- 2^{k-(n-3)} \#\{u\in (\ZZ/2^{n-3}\ZZ)^*;~u\equiv m^2 \imod{2^{n-3}}\}\\=2^{k-(n-2)}-2^{k-(n-3)}=\varphi(2^{k-n+2}).
\end{multline}
Then the second case follows from $\eqref{general-n}$ by setting $n=2i+1$ and the third case follows by setting $n=k+1$.

If $n=k$ and $\frac{D(t, u)}{2^k}\equiv 1 \imod{4}$, we have
$$\#\{u\in (\ZZ/2^k\ZZ)^*;~ \nu_{2, k}(D(t, u))=n\}= \#\{u\in (\ZZ/2^k\ZZ)^*;~ u\equiv m^2 \imod{2^{k-2}}~{\rm and}~ \frac{m^2-u}{2^{k-2}}\equiv 1 \imod{4} \}=1.$$
The case $n=k$ and $\frac{D(t, u)}{2^k}\equiv 3\imod{4}$ can be treated in exactly the same way.

If $0<n<k$ and $\frac{D(t, u)}{2^n}\equiv 3 \imod{4}$, we have
$$\#\{u\in (\ZZ/2^n\ZZ)^*;~ \nu_{2, k}(D(t, u))=n\}= 2^{k-n}\#\{u\in (\ZZ/2^n\ZZ)^*;~ u\equiv m^2 \imod{2^{n-2}}~{\rm and}~ \frac{m^2-u}{2^{n-2}}\equiv 3 \imod{4} \},$$which is $\varphi(2^{k-n+1}).$ The sixth case follows by letting $n=2i$.

If $0<n<k$ and $\frac{D(t, u)}{2^n}\equiv 1 \imod{8}$, we have
$$\#\{u\in (\ZZ/2^n\ZZ)^*;~ \nu_{2, k}(D(t, u))=n\}= 2^{k-n-1}\#\{u\in (\ZZ/2^{n+1}\ZZ)^*;~ u\equiv m^2 \imod{2^{n-2}}~{\rm and}~ \frac{m^2-u}{2^{n-2}}\equiv 1 \imod{8} \},$$which is $\varphi(2^{k-n}).$ The seventh case follows by letting $n=2i$. The eighth case can be dealt with similarly to the seventh case.
\end{proof}
\begin{remark}
{\em Note that for $k=1$ the only non-empty condition in \eqref{cases} is the first condition. For $k=2$ the non-empty conditions in \eqref{cases} are the first and the second one. Also the last three conditions in \eqref{cases} are non-empty only if $k\geq 5$.}
\end{remark}
We now obtain a formula for $\Sk$ in the case that $\ell=2$ and $t$ is even but not a multiple of $4$.  We have that $\Sk$ is not stable in this case as well.
\begin{lem}\label{lemma: stwobad}
	Let $t$ be an even integer such that $4\nmid t$. Then for $k\geq 3$ we have that 	
	$$\frac{\Stwo}{2^{5k-5}}=\frac{103}{6}-\frac{32}{3\cdot 2^{2k}}.$$
\end{lem}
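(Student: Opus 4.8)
The strategy will mirror that of Lemma \ref{lemma: sltcoprime}: I would combine the counting information of Lemma \ref{lemma: teven} with the explicit values of $m(t,u;2^k)$ from Theorem \ref{thrm: ch-thrm-five} to write
$$\Stwo=\sum_{u\in\zstartwo}m(t,u;2^k)^2$$
as an explicit finite sum and then simplify. Writing $t=2m$ with $m$ odd gives $D(t,u)=4(m^2-u)$, so $\nu_2(D(t,u))\geq 3$ for every unit $u$; hence only the branches of Theorem \ref{thrm: ch-thrm-five} with $n:=\nu_{2,k}(D(t,u))\geq 3$ occur, and for such $u$ the value $m(t,u;2^k)$ depends only on $n$ together with, when $n$ is even and $n\le k$, the class of $r:=D(t,u)/2^n$ modulo $8$.

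First I would partition $\zstartwo$ according to the value of $n$, subdividing each even $n\le k$ by whether $r\equiv 3\imod 4$, $r\equiv 1\imod 8$, or $r\equiv 5\imod 8$, exactly as in Lemma \ref{lemma: teven}. For each piece Lemma \ref{lemma: teven} supplies the number of admissible $u$ and Theorem \ref{thrm: ch-thrm-five} supplies the common value of $m(t,u;2^k)$, yielding a formula of the shape
$$\Stwo=\sum_{\substack{3\le n\le k+1\\ n\text{ odd}}}\varphi(2^{k-n+2})\Big(2^{2k}+2^{2k-1}-3\cdot 2^{2k-\frac{n+1}2}\Big)^2+(\text{even-}n\text{ terms})+(\text{boundary terms}),$$
where the even-$n$ block splits into the three sub-sums dictated by the residue of $r$ modulo $8$, and the boundary block gathers the contributions with $n\in\{k,k+1,k+2\}$, whose precise form depends on the parity of $k$.

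Next I would evaluate the inner sums over $n$ in closed form: substituting $n=2i$ or $n=2i+1$ turns each into a geometric series in $2^{-i}$, which sums to a rational function of $2^k$. Collecting everything presents $\Stwo$ as $2^{5k-5}$ times a Laurent polynomial in $2^k$; dividing by $2^{5k-5}$ and simplifying --- by hand or with a symbolic algebra package, as was done for \eqref{eqn: scoprimebig} --- should return $\tfrac{103}{6}-\tfrac{32}{3\cdot 2^{2k}}$. Since the claimed answer does not depend on the parity of $k$, a useful internal check will be to run the two cases $k$ even and $k$ odd separately and confirm they agree; I would also verify $k=3$ and $k=4$ directly, since several sub-sums of Lemma \ref{lemma: teven} are empty for small $k$.

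The hard part will be bookkeeping rather than any genuine difficulty: keeping track of the $O(k)$ many strata, their multiplicities from Lemma \ref{lemma: teven}, and the parity-sensitive boundary terms $n=k,k+1,k+2$, while ensuring that no unit $u$ is double-counted or missed --- in particular that the $r$-modulo-$8$ trichotomy for even $n<k$ exactly partitions the relevant odd residue classes. Once the sum is assembled correctly, the remaining simplification is entirely routine.
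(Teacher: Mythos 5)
Your proposal is correct and follows essentially the same route as the paper: the paper's proof likewise combines Theorem \ref{thrm: ch-thrm-five} with the counts from Lemma \ref{lemma: teven}, assembles the resulting stratified sum (with the parity-dependent boundary terms at $n=k,k+1,k+2$ handled exactly as you describe), and reduces the final simplification to a routine (symbolic) computation.
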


\begin{proof}
We apply Theorem \ref{thrm: ch-thrm-five} together with Lemma \ref{lemma: teven} and note that for a fixed value of $k$, the third case in Lemma \ref{lemma: teven} only occurs when $k$ is odd and $k\geq3$ and the fourth and fifth cases only occur when $k$ is even and $k\geq 4$. Then we have that
	\begin{align}
		\Stwo=&~(2^{2k}+2^{2k-1}-2^{\frac{3k}2+\frac{1-(-1)^k}4-1})^2+
		\sum_{1\leq i< \frac {k+1}2} \varphi(2^{k-2i+1})\left(2^{2k}+2^{2k-1}-3\cdot 2^{2k-i-1}\right)^2\nonumber\\
		&+\tfrac{(1-(-1)^k)}2\left(2^{2k}+2^{2k-1}- 2^{\frac{3k-1}2}\right)^2+\tfrac{(1+(-1)^k)}2\bigg(\left(2^{2k}+2^{2k-1}- 2^{\frac{3k}2-1}\right)^2+\left(2^{2k}+2^{2k-1}-3\cdot 2^{\frac{3k}2-1}\right)^2\bigg)\nonumber \\
		&+\sum_{2\leq i <\frac{k}2}\varphi(2^{k-2i+1})\left(2^{2k}+2^{2k-1}-3\cdot 2^{2k-i-1}\right)^2\nonumber\\
		&+\sum_{2\leq i <\frac{k}2}\varphi(2^{k-2i})\bigg(\left(2^{2k}+2^{2k-1}\right)^2+\left(2^{2k}+2^{2k-1}-2^{2k-i}\right)^2 \bigg).\label{eqn: stwobig}
	\end{align}
By a straightforward calculation, we simplify $\eqref{eqn: stwobig}$ to obtain the result.
\end{proof}

We now consider the cases when $4\mid t$ or $t$ is odd. 
As in Lemma \ref{lemma: teven} we must first count the number of $u\in \zstartwo$ such that $n:= {\nu}_{2,k}(D(t,u))$ is a fixed integer.

\begin{lem}\label{lemma: toddorfour}
Let $t$ be an integer such that $4\mid t$. Then for $k\geq 3$ we have that 
	$$\#\{u\in \zstartwo;~ {\nu}_{2,k}(D(t,u))=n \}=\begin{cases}
	{\varphi(2^{k-1})} &\eif  n=2 \eand \tfrac{D(t,u)}{4}\equiv 3 \imod {4} ,\\
	{\varphi(2^{k-2})} &\eif  n=2 \eand \tfrac{D(t,u)}{4}\equiv 1 \imod {8},\\
	{\varphi(2^{k-2})} &\eif  n=2 \eand \tfrac{D(t,u)}{4}\equiv 5 \imod {8},\\
	0 & \emph{\text{all other cases in }} \eqref{cases}.
	\end{cases}$$
\end{lem}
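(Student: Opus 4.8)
The plan is to pin down the $2$-adic valuation of $D(t,u)=t^2-4u$ completely using the hypothesis $4\mid t$, and then reduce everything to an elementary count of units in residue classes. First I would observe that $4\mid t$ forces $16\mid t^2$, whereas $\nu_2(4u)=2$ for every unit $u\in\zstartwo$; hence the ultrametric inequality gives $\nu_2(D(t,u))=2$ for \emph{all} $u\in\zstartwo$, and since $2<k+2$ the truncated valuation satisfies $\nu_{2,k}(D(t,u))=2$ as well. Consequently, in the eight-way classification \eqref{cases} every case with $n\neq 2$ is vacuous, and so are the boundary cases $n=k$, $n=k+1$, $n=k+2$, since each of these would force $k\leq 2$, contrary to $k\geq 3$. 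This leaves exactly the three interior cases of \eqref{cases} with $n=2$ (that is, $i=1$ in the branch $0<n=2i<k$), subject to one of the congruence conditions $D(t,u)/4\equiv 3\imod 4$, $D(t,u)/4\equiv 1\imod 8$, or $D(t,u)/4\equiv 5\imod 8$; note these three conditions partition the odd residues, hence partition $\zstartwo$, as they must.

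It remains to evaluate the three counts. Writing $D(t,u)/4=t^2/4-u$ and using $4\mid t$, one has $t^2/4\equiv 0\imod 4$, so the affine map $u\mapsto t^2/4-u$ is an involution of $\ZZ/2^k\ZZ$ carrying odd residues to odd residues; it therefore restricts to a bijection of $\zstartwo$ onto itself. Hence the number of $u\in\zstartwo$ with $D(t,u)/4$ in a prescribed class modulo $4$ (resp. modulo $8$) equals the number of units of $\ZZ/2^k\ZZ$ in that class. For $k\geq 3$ this gives $\#\{v\in\zstartwo:\ v\equiv 3\imod 4\}=2^{k-2}=\varphi(2^{k-1})$ and $\#\{v\in\zstartwo:\ v\equiv 1\imod 8\}=\#\{v\in\zstartwo:\ v\equiv 5\imod 8\}=2^{k-3}=\varphi(2^{k-2})$, which are precisely the asserted values; one also checks that these agree with the $i=1$ specializations of the corresponding formulas in Lemma \ref{lemma: teven}, so the entries of \eqref{cases} transfer unchanged.

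The argument is routine and I do not anticipate a genuine obstacle; the only thing requiring attention is the bookkeeping against \eqref{cases} — in particular confirming that $n=2$ really lands in the interior branch $0<n=2i<k$ rather than in a boundary branch $n=k$ or $n=k+1$ (which is exactly why $k\geq 3$ is assumed), and noting that the residue-class counts above are insensitive to whether $8\mid t$, since $u\mapsto t^2/4-u$ is used only as a bijection of $\zstartwo$ and not as an explicit formula.
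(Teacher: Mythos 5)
Your proof is correct and follows essentially the same route as the paper's: observe that $4\mid t$ forces $\nu_2(t^2-4u)=2$ for every unit $u$, so only the three $n=2$ branches of \eqref{cases} survive for $k\geq 3$, and then count units in the relevant residue classes modulo $4$ and $8$. The only (harmless) difference is that you carry out the final count directly via the involution $u\mapsto t^2/4-u$, whereas the paper simply notes that the $n=2$ counts from the proof of Lemma \ref{lemma: teven} go through without the hypothesis $4\nmid t$.
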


\begin{proof}
If  $4\mid t$ then  $D(t,u)\equiv \pm 4 \imod{16}$ and thus $n={\nu}_{2,k}(D(t,u))=2$. For $n=2$ and $k\geq 3$, the first five cases in \eqref{cases} do not occur. For the remaining cases, we note that removing the restriction that $4\nmid t$ in the proof of Lemma \ref{lemma: teven} does not change the result when $n=2$.
\end{proof}

\begin{lem}\label{lemma: sltk results}
Let $t$, $t_1$, and $t_2$ be integers. Then for $k\geq 3$ we have that 
	\begin{equation*}
	\frac{\Sttwo}{2^{5k-5}}=\begin{cases}
	4 &\eif 2\nmid t_1t_2,\\
	8 &\eif 2\mid t_1 t_2 \eand 2\nmid (t_1, t_2),\\
	\frac{33}{2} &\eif  4\mid (t_1,t_2) \eand t_1\not \equiv t_2 \imod{16},\\
		\frac{35}{2} &\eif  4\mid (t_1,t_2) \eand t_1\equiv t_2\imod{16}.
	\end{cases}
	\end{equation*}
	Furthermore,
	\begin{equation*}	
	\frac{\Stwo}{2^{5k-5}}=\begin{cases}
	4 &\emph{\text{ if }}   2\nmid t,\\
	\frac{35}{2} &\emph{\text{ if }}  4\mid t.
	\end{cases}
	\end{equation*}
\end{lem}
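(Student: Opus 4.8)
The plan is to evaluate $S(t_1,t_2;2^k)$ directly from the explicit formula of Theorem~\ref{thrm: ch-thrm-five} together with the point counts of Lemmas~\ref{lemma: teven} and~\ref{lemma: toddorfour}, exactly in the spirit of the proofs of Lemmas~\ref{lemma: sltcoprime}--\ref{lemma: stwobad}. The four displayed cases correspond to the parity and $4$-divisibility patterns of $t_1$ and $t_2$, and the final assertion is the specialization $t_1=t_2=t$, so I would organize the argument around those patterns.

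The cases with an odd trace are essentially immediate. If both $t_1,t_2$ are odd then $m(t_i,u;2^k)=2^{2k-1}$ for every $u$, so $S(t_1,t_2;2^k)=\varphi(2^k)\cdot 2^{4k-2}=2^{5k-3}$, which gives $4$. If exactly one, say $t_2$, is odd, then $m(t_2,u;2^k)=2^{2k-1}$ is constant and $S(t_1,t_2;2^k)=2^{2k-1}\sum_{u\in\zstartwo}m(t_1,u;2^k)$, where the remaining sum counts the trace-$t_1$ matrices in ${\rm M}_2(\ZZ/2^k\ZZ)$ with unit determinant. Writing such a matrix as $\left(\begin{smallmatrix}a&b\\ c&t_1-a\end{smallmatrix}\right)$ and using that $t_1$ is even, its determinant $a(t_1-a)-bc\equiv a+bc\pmod 2$, and a quick parity count of the $2^{3k}$ triples $(a,b,c)$ shows that exactly $2^{3k-1}$ of them produce an even determinant; hence $\sum_{u}m(t_1,u;2^k)=2^{3k-1}$, $S(t_1,t_2;2^k)=2^{5k-2}$, and the ratio is $8$. (Alternatively one may sum the values of Theorem~\ref{thrm: ch-thrm-five} against the counts of Lemma~\ref{lemma: teven} or~\ref{lemma: toddorfour}, according as $4\nmid t_1$ or $4\mid t_1$.)

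The substantive case is $4\mid(t_1,t_2)$. Here $D(t_i,u)=t_i^2-4u\equiv\pm4\pmod{16}$ for every unit $u$, so $\nu_{2,k}(D(t_i,u))=2$ (this is where $k\geq 3$ enters, forcing $2<k$), and Theorem~\ref{thrm: ch-thrm-five} then forces $m(t_i,u;2^k)$ to be one of $3\cdot 2^{2k-2},\ 6\cdot 2^{2k-2},\ 4\cdot 2^{2k-2}$ according as $D(t_i,u)/4$ is $\equiv 3\pmod 4$, $\equiv 1\pmod 8$, or $\equiv 5\pmod 8$; by Lemma~\ref{lemma: toddorfour} these three classes contain $\varphi(2^{k-1}),\ \varphi(2^{k-2}),\ \varphi(2^{k-2})$ units respectively. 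Since $D(t_1,u)/4-D(t_2,u)/4=(t_1^2-t_2^2)/4$ is independent of $u$ and divisible by $4$, the pair of classes of $\bigl(D(t_1,u)/4,\,D(t_2,u)/4\bigr)$ modulo $8$ is determined by $u\bmod 8$, so I would split the sum over the four odd classes of $u$ modulo $8$ (each containing $2^{k-3}$ units) and add the four products of $m$-values. When $(t_1^2-t_2^2)/4\equiv 0\pmod 8$ the two factors coincide for every $u$ and $S(t_1,t_2;2^k)=\sum_u m(t_1,u;2^k)^2=2^{5k-7}(2\cdot 9+36+16)=35\cdot 2^{5k-6}$, giving $35/2$; when $(t_1^2-t_2^2)/4\equiv 4\pmod 8$ the cross terms recombine to $2^{5k-7}(9+24+9+24)=33\cdot 2^{5k-6}$, giving $33/2$ — these are the last two sub-cases of the statement. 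The final assertion follows by setting $t_1=t_2=t$: the ratio is $4$ for $t$ odd and $35/2$ for $4\mid t$, since then the two factors are trivially equal.

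The only real obstacle is the bookkeeping in the $4\mid(t_1,t_2)$ case: one must correctly match each residue class of $u$ modulo $8$ with the induced pair of values of $m(t_1,u;2^k)$ and $m(t_2,u;2^k)$, and check that $k\geq 3$ is exactly what keeps $\nu_{2,k}(D(t_i,u))=2<k$ and makes the relevant conditions of Lemma~\ref{lemma: toddorfour} non-empty. The rest of the computation is routine.
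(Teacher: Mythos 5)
Your argument is correct and, for the odd--odd case and the two $4\mid(t_1,t_2)$ cases, is essentially the paper's own computation: pair the values of $m(t_i,u;2^k)$ from Theorem \ref{thrm: ch-thrm-five} with the counts of Lemma \ref{lemma: toddorfour} and sum over the class of $u$ modulo $8$; your totals $35\cdot2^{5k-6}$ and $33\cdot2^{5k-6}$ match the paper's. The one genuinely different step is the case $2\mid t_1t_2$, $2\nmid(t_1,t_2)$: the paper splits further according to whether $4\nmid t_2$ or $4\mid t_2$ and evaluates $\sum_u m(t_2,u;2^k)$ by summing Theorem \ref{thrm: ch-thrm-five} against Lemma \ref{lemma: teven} or Lemma \ref{lemma: toddorfour}, whereas your parity count (for even $t$ the determinant is $\equiv a+bc \pmod{2}$, so exactly $2^{3k-1}$ of the $2^{3k}$ trace-$t$ matrices have odd determinant) gets $\sum_u m(t_1,u;2^k)=2^{3k-1}$ in one line and bypasses both lemmas; this is a real simplification. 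One further point in your favour: the invariant that actually governs the last two cases is $(t_1^2-t_2^2)/4 \bmod 8$, i.e.\ whether $t_1\equiv t_2\pmod{8}$, exactly as you set it up; the condition $t_1\equiv t_2\pmod{16}$ appearing in the statement (and in the paper's proof) is only sufficient for the $35/2$ value and does not give the exact dichotomy (for instance $t_1=4$, $t_2=12$ have $t_1\not\equiv t_2\pmod{16}$ yet $t_1^2\equiv t_2^2\pmod{32}$, so the two factors coincide for every $u$ and the ratio is $35/2$, not $33/2$). So when you identify your two sub-cases with the statement's, the correct translation is modulo $8$, not modulo $16$.
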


\begin{proof}

If $2\nmid t_1t_2$ then by the definition of $S(t_1, t_2; 2^k)$, Theorem \ref{thrm: ch-thrm-five}, and the fact that there are $\varphi(2^k)$ invertible  elements in $\ZZ/2^k\ZZ$, we have that
$$S(t_1,t_2;2^k)=\varphi(2^k)\cdot(2^{2k-1})^2=2^{5k-3}.$$

If $2\mid t_1 t_2$ and $2\nmid (t_1, t_2)$, we consider two cases.

\noindent{Case 1:} First suppose $2\nmid t_1$, $2\mid t_2$, and $4\nmid t_2$.  Then from Theorem \ref{thrm: ch-thrm-five} and 
Lemma \ref{lemma: teven} we have, for $k\geq 3$, that
\begin{align*}
		S(t_1, t_2, 2^k)=&~2^{2k-1}\left( (2^{2k}+2^{2k-1}-2^{\frac{3k}2+\frac{1-(-1)^k}4-1})+
		\sum_{1\leq i< \frac {k+1}2} \varphi(2^{k-2i+1})\left(2^{2k}+2^{2k-1}-3\cdot 2^{2k-i-1}\right)\right.\nonumber\\
		&+\tfrac{(1-(-1)^k)}2\left(2^{2k}+2^{2k-1}- 2^{\frac{3k-1}2}\right)+\tfrac{(1+(-1)^k)}2\bigg(\left(2^{2k}+2^{2k-1}- 2^{\frac{3k}2-1}\right)+\left(2^{2k}+2^{2k-1}-3\cdot 2^{\frac{3k}2-1}\right)\bigg)\nonumber \\
		&+\sum_{2\leq i <\frac{k}2}\varphi(2^{k-2i+1})\left(2^{2k}+2^{2k-1}-3\cdot 2^{2k-i-1}\right)\nonumber\\
		&\left.+\sum_{2\leq i <\frac{k}2}\varphi(2^{k-2i})\bigg(\left(2^{2k}+2^{2k-1}\right)+\left(2^{2k}+2^{2k-1}-2^{2k-i}\right) \bigg)\right)=2^{5k-2}.
	\end{align*}

\noindent{Case 2:}  Next suppose that $2\nmid t_1$ and $4\mid t_2$. Then from Theorem \ref{thrm: ch-thrm-five} and Lemma \ref{lemma: toddorfour} we have, for $k\geq 3$, that
\begin{align*}
S(t_1,t_2;2^k)&=2^{2k-1}\left(\sum_{\substack{u\in \zstartwo \\ r\equiv 3 \imod{4}}}\left(2^{2k}+2^{2k-1}-3\cdot 2^{2k-2}\right)+ \sum_{\substack{u\in \zstartwo \\ r\equiv 5 \imod{8}}}\left(2^{2k}+2^{2k-1}\right)
+\sum_{\substack{u\in \zstartwo \\ r\equiv 1 \imod{8}}}2^{2k}\right)\\
&=2^{2k-1}\left({\varphi(2^{k-1})}\left(2^{2k}+2^{2k-1}-3\cdot 2^{2k-2}\right)+{\varphi(2^{k-2})}\left(2^{2k}+2^{2k-1}\right)+{\varphi(2^{k-2})}2^{2k}\right)\\
&=2^{5k-2}.
\end{align*}

Next we assume $4\mid (t_1,t_2)$. If $t_1\equiv t_2 \imod{16}$ then $t_1^2\equiv t_2^2 \imod{32}$ and thus $$r:=\frac{D(t_1,u)}{4}\equiv \frac{D(t_2,u)}{4}\imod{8}.$$ Then from Theorem \ref{thrm: ch-thrm-five} and Lemma \ref{lemma: toddorfour} we have, for $k\geq 3$, that
\begin{align*}
S(t_1,t_2;2^k)&=\sum_{\substack{u\in \zstartwo \\ r\equiv 3 \imod{4}}}\left(2^{2k}+2^{2k-1}-3\cdot 2^{2k-2}\right)^2+ \sum_{\substack{u\in \zstartwo \\ r\equiv 5 \imod{8}}}\left(2^{2k}+2^{2k-1}\right)^2
+\sum_{\substack{u\in \zstartwo \\ r\equiv 1 \imod{8}}}\left(2^{2k}\right)^2\\
&={\varphi(2^{k-1})}\left(2^{2k}+2^{2k-1}-3\cdot 2^{2k-2}\right)^2+{\varphi(2^{k-2})}\left(2^{2k}+2^{2k-1}\right)^2+{\varphi(2^{k-2})}\left(2^{2k}\right)^2\\
&=2^{5k-6}\cdot 35.
\end{align*}

Similarly, if $t_1\not \equiv t_2 \imod{16}$ then $\tfrac{D(t_1,u)}4 \not \equiv \tfrac{D(t_2,u)}4 \imod{8}$ and we have that
\begin{align*}
S(t_1,t_2;2^k)&={\varphi(2^{k-1})}\left(2^{2k}+2^{2k-1}-3\cdot 2^{2k-2}\right)^2+2{\varphi(2^{k-2})}\left(2^{2k}+2^{2k-1}\right)(2^{2k})\\
&=2^{5k-6}\cdot 33.
\end{align*}
The formulas for $S(t; 2^k)$ follow by considering $t_1=t_2$ in the above formulas.
\end{proof}
We can now prove the main results of this section.
\begin{proof}[Proof of Theorem \ref{samet}]
	The result follows from Lemmas \ref{lemma: sltcoprime}, \ref{lemma: ldivides t}, \ref{lemma: stwobad}, and \ref{lemma: sltk results}.
\end{proof}

\begin{proof}[Proof of Proposition \ref{proved-cases}]
	The result follows from Lemmas \ref{lemma: ldivides t}, \ref{lemma: ldivides t_1}, and \ref{lemma: sltk results}.
\end{proof}

\section{The $\ell$ factors as the average value of  $f_\ell(t_1, p)f_\ell(t_2, p)$}\label{section: proof of prop}
For a positive integer $k$ and primes $\ell$ and $p$, we set
\begin{equation*}
\label{def-tilnulk}
f_\ell^{(k)}(t, p):=\frac{\mtp}
{\ell^{2k-2} (\ell^2-1)}. 
\end{equation*}
Then by \eqref{equation: nuell} we have $$f_\ell(t, p)=\lim_{k\rightarrow\infty} f_\ell^{(k)}(t,p).$$

We now prove that the $\ell$ factor of the universal constant is the average value of $f_\ell(t_1, p)f_\ell(t_2, p)$ as $p$ varies over primes.
\begin{proof}[Proof of Proposition \ref{prop: primesum}]
We write the proof for $\ell$ odd. The proof can be adjusted for $\ell=2$. 
From the definitions of $f_\ell$ and $f_\ell^{(k)}$ and Theorem \ref{thrm: ch-thrm-five}  we have that $f_\ell(t_1, p)f_\ell(t_2, p)=f_\ell^{(k)}(t_1, p) f_\ell^{(k)}(t_2, p)$ if $t_1^2-4p \not\equiv 0 \imod{\ell^{k}}$ and $t_2^2-4p \not\equiv 0 \imod{\ell^{k}}$. Otherwise, either $t_1^2-4p \equiv 0 \imod{\ell^{k}}$ or $t_2^2-4p \equiv 0 \imod{\ell^{k}}$. In these cases we know that the difference between $f_\ell$ and $f_\ell^{(k)}$ is $\ell^{2-\frac{k}{2}-\frac{3+(-1)^k}{4}}/(\ell^2-1)$ from Theorem \ref{thrm: ch-thrm-five},
which is bounded by a constant depending only on $\ell$. Also from \eqref{fell} we know that $f_\ell$ is bounded by a constant depending only on $\ell$. From these observations, we conclude that 
\begin{equation}
\label{difference}
\left|\sum_{p\leq x} f_\ell(t_1, p) f_\ell(t_2, p)-\sum_{p\leq x}f_\ell^{(k)}(t_1, p) f_\ell^{(k)}(t_2, p) \right| \leq C_\ell \left(\sum_{\substack{{p\leq x}\\{p\equiv 4^{*}t_1^2 \imod{\ell^k}}}} 1+\sum_{\substack{{p\leq x}\\{p\equiv 4^* t_2^2 \imod{\ell^{k}}}}}1 \right),
\end{equation}
where $C_{\ell}$ is a constant that depends only on $\ell$. Here, we denoted the multiplicative inverse of $4$ in $(\ZZ/\ell^k\ZZ)^*$ by $4^*$. From \eqref{difference} and Dirichlet's theorem on primes in arithmetic progressions, we have that 
\begin{equation}
\label{last}
\limsup_{x\rightarrow \infty} \frac{1}{\pi(x)} \sum_{p\leq x}f_\ell(t_1, p)f_\ell(t_2, p)\leq  
\limsup_{x\rightarrow \infty} \frac{1}{\pi(x)} \sum_{p\leq x}f_\ell^{(k)}(t_1, p)f_\ell^{(k)}(t_2, p)+\frac{2C_\ell}{\varphi(\ell^{k})}.
\end{equation}
Next by breaking up the sum over primes into sums over primes in distinct invertible residue classes modulo $\ell^k$ and invoking Dirichlet's theorem, we have that
$$ \lim_{x\rightarrow \infty} \frac{1}{\pi(x)} \sum_{p\leq x} f_\ell^{(k)}(t_1, p) f_\ell^{(k)}(t_2, p)=\frac{1}{\varphi(\ell^k)} \sum_{u\in (\ZZ/\ell^k \ZZ)^*} f_\ell^{(k)}(t_1, u) f_\ell^{(k)}(t_2, u),$$
where $f_\ell^{(k)} (t_i, u)=m(t_i, u;\ell^k)/(\ell^{2k-2}(\ell^2-1))$ for $i=1, 2$.
By employing this identity and formulas \eqref{def-st} and \eqref{def-tilnulk}, the inequality in  \eqref{last} can be re-written as 
$$\limsup_{x\rightarrow \infty} \frac{1}{\pi(x)} \sum_{p\leq x} f_\ell(t_1, p)f_\ell(t_2, p)\leq  
\frac{1}{(\ell-1)^3(\ell+1)^2}\frac{\Sts}{\ell^{5k-5}}+\frac{2C_\ell}{\varphi(\ell^{k})}.$$
Sending $k\rightarrow \infty$ in the right-hand side of the above inequality and applying Theorem \ref{thrm: ctonetwo} yields
\begin{equation*}
\limsup_{x\rightarrow \infty} \frac{1}{\pi(x)} \sum_{p\leq x} f_\ell(t_1, p)f_\ell(t_2, p) \leq \frac{1}{(\ell-1)^3(\ell+1)^2} \lim_{k\rightarrow \infty} \frac{\Sts}{\ell^{5k-5}}.
\end{equation*}
Similarly, we can show that
\begin{equation*}
\frac{1}{(\ell-1)^3(\ell+1)^2} \lim_{k\rightarrow \infty} \frac{\Sts}{\ell^{5k-5}} \leq
\liminf_{x\rightarrow \infty} \frac{1}{\pi(x)} \sum_{p\leq x} f_\ell(t_1, p)f_\ell(t_2, p)
\end{equation*}
to complete the proof.
\end{proof}

\section{An average result}\label{section: DKS}
In this section we 
show the existence of a positive constant $c_{t_1,t_2}$ such that $$\underset{p\leq x}{\sum \nolimits^{\prime}}\frac{H(t_1^2-4 p)H(t_2^2-4 p)}{p^2}\sim c_{t_1,t_2}\log\log x,$$ as $x\rightarrow\infty$. To do this we apply a result of David, Koukoulopoulos, and Smith \cite[Theorem 4.2]{DKS}. To state their theorem in a way applicable to our problem, we need the following notations and conditions specialized to our case.

For integers $t_1$, $t_2$ and $x>\max\{3, t_1^2/4, t_2^2/4\}$, let 
$D(t_i, x)=t_i^2-4x$ for $i=1, 2$. 
For primes $\ell$, $p$, and positive integer $k$ we consider the functions $\delta_\ell$, $\Delta_{\ell^k}$, $\Delta_\ell$, and $w_p$ satisfying the following conditions:

(I) The function $\delta_\ell(p)$ is defined on primes such that $\delta_\ell(p)=0$ if $\ell=p$.

(II) We have $\delta_\ell(p)\ll \frac{1}{\ell}$ for all $p \in (x,2x]$ and all primes $\ell$.

(III) For all primes $x<p\leq 2x$ such that $\ell \nmid D(t_i,p)$, there exist complex numbers $\lambda_1(p)$ and $\lambda_2(p)$, bounded as functions of $p$, and $\eta>0$ such that  $$\delta_\ell(p)=\frac{\lambda_1(p)}{\ell}\left(\frac{D(t_1,p)}{\ell}\right)+\frac{\lambda_2(p)}{\ell}\left(\frac{D(t_2,p)}{\ell}\right) +O\left(\frac{1}{\ell^{1+\eta}}\right).$$

(IV) For every prime $\ell$ and every $k\geq 1$ the function $\Delta_{\ell^k} : \ZZ\rightarrow \CC$ is such that:

\indent\indent (a) $\Delta_{\ell^k}$ is $\ell^k$-periodic.

\indent\indent (b) $\delta_\ell(p)=\Delta_{\ell^k}(p)$ if $p\imod{\ell^k} \in \{u \in \zstar : (t_1^2-4u)(t_2^2-4u)\not \equiv 0 \imod{\ell^k}\}.$

\indent \indent(c) $\Delta_{\ell^k}$ has a mean value as $k \rightarrow \infty$ over $\zstar$, that is, there exists a $\Delta_\ell\in \CC$ such that $$\lim_{k\rightarrow \infty}\frac{1}{\varphi(\ell^k)}\sum_{u \in \zstar} \Delta_{\ell^k}(u)=\Delta_\ell.$$
\indent \indent Moreover, $|1+\Delta_\ell|\gg 1.$

\indent\indent (d) $\parallel \Delta_{\ell^k}\parallel_\infty =\sup_{u\in \mathbb{Z}} |\Delta_{\ell^k}(u)| \ll \frac{1}{\ell},$ for all $k\geq 1$.

(V)  For $q\leq Q:=\exp\{(\log\log x)^2\}$ and $u\in (\ZZ/q\ZZ)^*$ there exists $\widetilde{W}>0$ such that
\begin{equation*} \label{eqn: wtilde}
\sum_{\substack{x<p\leq 2x \\ p \equiv u \imod{q}}}|w_p| \ll \frac{\widetilde{W}}{\varphi(q)},
\end{equation*}
where $w_p$ is a function defined on primes $p\in (x,2x]$.

The following is Theorem 4.2 of \cite{DKS} written under the special conditions described above.

\begin{thrm}[\textbf{David-Koukoulopoulos-Smith}]\label{thrm: DKS}
With the above notation, assume conditions (I), (II), (III), (IV), and (V) hold and let $\epsilon>0$ and $C\geq 1$ be fixed. 
Let $$P_p:=\prod_{\ell}(1+\delta_\ell(p)) \qquad\text{ and } \qquad   W:=\sum_{x<p\leq 2x} w_p       .$$
Then the infinite product $$P:=\prod_{\ell}(1+\Delta_\ell)$$ converges absolutely and we have that 
\begin{equation}
\label{last-equation}
\sum_{x<p\leq 2x}w_pP_p = P\cdot \Bigg(W+O_{\eta, \epsilon,C}\Bigg(\frac{\widetilde{W}}{(\log x)^C}+Mx^\epsilon+(\log\log x)^{O(1)}\widetilde{W}^{\frac{2}{3}}E^{\frac{1}{3}}\Bigg)\Bigg),
\end{equation}
where $$M=\max_{\substack{1\leq i \leq 2 \\ n\neq 0}}\sum_{\substack{x<p\leq 2x \\ D(t_i,p)/n \emph{\text{ is a square}}}}|w_p| \qquad\text{ and } \qquad	E=\sum_{q\leq Q}\max_{u\in(\ZZ/q\ZZ)^*}\Bigg|\sum_{\substack{x<p\leq 2x \\ p\equiv u \imod{q}}}w_p-\frac{W}{\varphi(q)}\Bigg|.$$
\end{thrm}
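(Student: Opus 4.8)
This is \cite[Theorem~4.2]{DKS}, so the plan is simply to reproduce the structure of their argument with hypotheses (I)--(V) playing the roles of the general axioms used there; I will only indicate the shape of the proof. First I would truncate the Euler product defining $P_p$ at a level $z$ (a suitable power of $\log x$), writing $P_p=\prod_{\ell\le z}(1+\delta_\ell(p))\cdot\prod_{\ell>z}(1+\delta_\ell(p))$. Weighting by $|w_p|$ and summing over $x<p\le 2x$, I would expand the tail product and estimate each term using the refined shape (III) of $\delta_\ell(p)$: its main contribution is a bounded constant times $\ell^{-1}\bigl(\tfrac{D(t_i,p)}{\ell}\bigr)$, and the equidistribution of these Legendre symbols over progressions (quantified through (V) and the quantity $E$) forces the tail to equal $1$ up to an admissible error of size $\widetilde{W}/(\log x)^C$. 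This reduces the problem to estimating $\sum_{x<p\le 2x} w_p\prod_{\ell\le z}(1+\delta_\ell(p))$.

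Next, for each $\ell\le z$ I would replace $\delta_\ell(p)$ by the periodic function $\Delta_{\ell^{k}}(p)$ with $k=k(\ell,x)$ chosen large; by (IV)(b) this substitution is exact unless $p$ lies in one of the finitely many residue classes modulo $\ell^{k}$ with $(t_1^2-4p)(t_2^2-4p)\equiv 0\imod{\ell^{k}}$. On the generic part one uses the $\ell^k$-periodicity (IV)(a) to expand the finite product $\prod_{\ell\le z}(1+\Delta_{\ell^{k}}(p))$ into a sum $\sum_q \beta_q(p\bmod q)$ over moduli $q$ built from the primes $\ell\le z$, interchanges the sums over $p$ and $q$, and inserts $\sum_{x<p\le 2x,\,p\equiv u\imod{q}}w_p=W/\varphi(q)+(\text{error})$. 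The main term telescopes: by the mean-value hypothesis (IV)(c), the average of $\beta_q$ over $(\ZZ/q\ZZ)^*$ is a product of $\Delta_\ell$'s, so the total main term is $W\prod_{\ell\le z}(1+\Delta_\ell)=W\cdot P\,(1+o(1))$. Absolute convergence of $P=\prod_\ell(1+\Delta_\ell)$ is obtained here by combining the bound (IV)(d) with the cancellation in (III), which gives $\Delta_\ell\ll \ell^{-1-\min\{1,\eta\}}$. The accumulated equidistribution error is then controlled by cutting the $q$-sum at $Q=\exp\{(\log\log x)^2\}$: below $Q$ it is absorbed into $E$, above $Q$ one uses the trivial bound $\widetilde{W}/\varphi(q)$ from (V), and optimizing the split by a H\"older-type inequality produces the term $(\log\log x)^{O(1)}\widetilde{W}^{2/3}E^{1/3}$.

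Finally I would bound the error incurred by the replacement $\delta_\ell(p)\rightsquigarrow\Delta_{\ell^{k}}(p)$, i.e. the contribution of the bad residue classes. Here (III) is again the key input: at a prime $\ell\mid D(t_i,p)$ the local density is governed by whether a quotient $D(t_i,p)/\ell^{j}$ is a quadratic residue, so after unfolding one is reduced to bounding, for each relevant squarefree cofactor $n$ (of which there are $O(x^\epsilon)$), the sum of $|w_p|$ over primes $p\le 2x$ with $D(t_i,p)/n$ a perfect square; by definition this is at most $M$, giving a total $\ll Mx^\epsilon$. Collecting the three error contributions yields \eqref{last-equation}. The hard part will be exactly this last step: because $\delta_\ell(p)$ at primes $\ell\mid D(t_i,p)$ depends on $p$ in a non-multiplicative way, $P_p$ does not factor as a clean Dirichlet convolution, and controlling these ``ramified'' primes --- which is what forces the square-detection and hence the term $Mx^\epsilon$ --- in tandem with the simultaneous choice of the truncation level $z$ and the optimization of the equidistribution error, is the technical heart of the argument.
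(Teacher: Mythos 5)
The paper offers no proof of this statement: it is quoted, specialized to the two traces $t_1,t_2$, directly from \cite[Theorem 4.2]{DKS}, so there is no internal argument to compare yours against. Your proposal takes essentially the same route---deferring to David--Koukoulopoulos--Smith---and your outline of their method (truncating the Euler product, replacing $\delta_\ell(p)$ by the periodic $\Delta_{\ell^k}(p)$, using equidistribution in progressions up to $Q$ to produce the $E$-term, and detecting squares at the ramified primes to produce $Mx^\epsilon$) is consistent with how their Theorem 4.2 is actually proved.
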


We are now ready to prove the following average result. 

\begin{thrm}\label{thrm: lang-trotter average}
Let $t_1,t_2\in \ZZ$ and $x\geq \max\{3, t_1^2/4, t_2^2/4\}$. Then there exists an explicit constant $c_{t_1,t_2}>0$ such that
$$\underset{p\leq x}{\sum \nolimits^{\prime}} \frac{H(t_1^2-4 p)H(t_2^2-4 p)}{p^2}=c_{t_1,t_2}\int_2^x\frac{du}{u\log u}+O_{t_1,t_2}\left(1\right).$$
Furthermore, if $t=t_1=\pm t_2$ then  $c_{t,\pm t}$ is given by $\eqref{ctequals}$. If $t_1\neq \pm t_2$ then $c_{t_1,t_2}$ is the constant given in Theorem \ref{thrm: ctonetwo}.
\end{thrm}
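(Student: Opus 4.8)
The plan is to apply Theorem \ref{thrm: DKS} with a judicious choice of the weights $w_p$ and the local densities $\delta_\ell(p)$, and then convert the resulting asymptotic on the dyadic range $(x,2x]$ into the stated asymptotic (with an integral logarithm main term) by partial summation over dyadic blocks. The natural choice is $w_p := 1/(p\log p)$ on $p\in(x,2x]$, so that $W = \sum_{x<p\le 2x} 1/(p\log p) \asymp 1/(x\log x)\cdot \pi(2x) - \dots$; more carefully one takes $w_p = \mathbf{1}_{p>\max\{3,t_1^2/4,t_2^2/4\}}/(p\log p)$, which only affects finitely many terms. For the local factors, I would set $\delta_\ell(p)$ so that $1+\delta_\ell(p)$ equals (a normalized version of) $f_\ell(t_1,p)f_\ell(t_2,p)$, i.e. $1+\delta_\ell(p) = \frac{1}{(\ell-1)^{3}(\ell+1)^{2}}\cdot(\text{something})$ chosen so that $P_p = \prod_\ell(1+\delta_\ell(p))$ recovers, via Gekeler's product formula (Theorem \ref{theorem: product formula}), the quantity $H(t_1^2-4p)H(t_2^2-4p)/p^2$ up to the archimedean factor $p^2 f_\infty(t_1,p)f_\infty(t_2,p)$. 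Since $f_\infty(t,p) = \frac{1}{\pi\sqrt p}\sqrt{1-t^2/4p}$, the archimedean contribution is $\frac{1}{\pi^2}\sqrt{(1-t_1^2/4p)(1-t_2^2/4p)} = \frac{1}{\pi^2}+O_{t_1,t_2}(1/p)$ on the dyadic range, a harmless bounded fluctuation that I would absorb into the weight (replacing $w_p$ by $w_p\cdot p^2 f_\infty(t_1,p)f_\infty(t_2,p)$, which is still $\asymp 1/(p\log p)$ and still satisfies the hypotheses).

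The substantive work is verifying conditions (I)--(V). Conditions (I), (II), (IV)(a),(b),(d) follow directly from Gekeler's explicit formula \eqref{fell}: there $f_\ell(t,p)$ depends only on the Kronecker symbol data of $t^2-4p$ modulo a power of $\ell$, hence $\delta_\ell(p)$ is determined by $p \bmod \ell^k$ off the bad set, it is $\ell^k$-periodic, and $\delta_\ell(p)\ll 1/\ell$ because \eqref{fell} gives $f_\ell(t,p) = 1+O(1/\ell)$. Condition (III) is the Taylor expansion of $\delta_\ell$ in the ``generic'' case $\delta=0$: from \eqref{fell}, $f_\ell(t,p) = (1-\ell^{-2})^{-1}(1+\ell^{-1}+O(\ell^{-1})\,\text{[sign term]})$ where the sign term is $\left(\frac{t^2-4p}{\ell}\right)$-dependent, so expanding the product $f_\ell(t_1,p)f_\ell(t_2,p)$ and subtracting $1$ yields precisely the shape $\frac{\lambda_1(p)}{\ell}\left(\frac{D(t_1,p)}{\ell}\right)+\frac{\lambda_2(p)}{\ell}\left(\frac{D(t_2,p)}{\ell}\right)+O(\ell^{-2})$ with bounded $\lambda_i$ (in fact constants). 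Condition (IV)(c), the existence of the mean value $\Delta_\ell$ over $\zstar$ together with $|1+\Delta_\ell|\gg 1$, is exactly the content of Proposition \ref{prop: primesum} combined with Theorem \ref{thrm: ctonetwo} (which guarantees $\lim_k \Sk$ exists and, via the Euler product convergence $\Delta_\ell\ll \ell^{-3/2}$, is nonzero for large $\ell$; for the finitely many small $\ell$ one checks $1+\Delta_\ell>0$ directly from Theorems \ref{samet} and Propositions \ref{proved-cases}, or from nonvanishing of $c_{t_1,t_2}$ when $t_i$ are not both odd). Condition (V) and the error terms $M$ and $E$ in \eqref{last-equation}: the equidistribution input $E$ is Bombieri--Vinogradov (the weights $1/(p\log p)$ are smooth enough, and $Q = \exp\{(\log\log x)^2\}$ is well within the BV range), giving $E \ll_A \widetilde W/(\log x)^A$; the square-sieve term $M$ is controlled because $\#\{p\le 2x: D(t_i,p)/n \text{ a square}\}$ is, for each fixed $n\ne 0$, the count of primes represented by a fixed quadratic, which is $O(\sqrt x)$ by an elementary sieve, so $M \ll \sqrt x/(x\log x) = o(\widetilde W)$; and $\widetilde W \asymp 1/(x\log x)\cdot$(block length normalization).

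Granting the above, Theorem \ref{thrm: DKS} yields $\sum_{x<p\le 2x} w_p P_p = P\cdot(W + o(W))$ with $P = \prod_\ell(1+\Delta_\ell)$, and by the normalization choices $P = \pi^2 c_{t_1,t_2}/\pi^2 = c_{t_1,t_2}$ after reinstating the $1/\pi^2$ from the archimedean factor — more precisely $P$ equals the Euler product $\prod_\ell c_\ell(t_1,t_2)$ of Theorem \ref{thrm: ctonetwo} up to the constant $1/\pi^2$ which sits in the weight. Summing this over dyadic ranges $x=x_0,x_0/2,x_0/4,\dots$ and using $\sum_{p\le y} 1/(p\log p) = \log\log y + C + o(1)$ (Mertens), the main terms telescope to $c_{t_1,t_2}\int_2^x du/(u\log u) + O_{t_1,t_2}(1)$, while the accumulated error terms sum to $O_{t_1,t_2}(1)$ since each dyadic error is $O(\widetilde W/(\log x)^C)$ with $C$ large and these form a convergent geometric-type series. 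This gives the displayed asymptotic; the final sentence, that $c_{t,\pm t}$ is \eqref{ctequals} and that for $t_1\ne\pm t_2$ it is the constant of Theorem \ref{thrm: ctonetwo}, is then immediate from the identification $P = c_{t_1,t_2}$ together with Corollary \ref{cor: ct}. \textbf{The main obstacle} I anticipate is not any single estimate but the bookkeeping of the normalizations: ensuring that the product $\prod_\ell(1+\delta_\ell(p))$, after folding in the $(\ell-1)^{-3}(\ell+1)^{-2}$ scaling and the $p^2 f_\infty f_\infty$ archimedean factor, reproduces $H(t_1^2-4p)H(t_2^2-4p)/p^2$ exactly (via Theorem \ref{theorem: product formula} applied to each curve), and simultaneously that the averaged product $\prod_\ell(1+\Delta_\ell)$ matches the Euler product of $c_{t_1,t_2}$ in Theorem \ref{thrm: ctonetwo} — the two consistency checks must use the same normalization, and getting the powers of $\ell$, the $2/\pi$ versus $1/\pi^2$, and the $\varphi(\ell^k)$ versus $\ell^k$ factors all to line up is where errors are most likely to creep in. A secondary technical point is confirming condition (V) with a clean value of $\widetilde W$: one must check that partial summation converting the dyadic Bombieri--Vinogradov estimate into a statement about $\sum 1/(p\log p)$ over residue classes does not degrade the saving below $(\log x)^{-C}$.
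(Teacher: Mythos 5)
Your strategy is essentially the paper's: apply Theorem \ref{thrm: DKS} on dyadic blocks with $\delta_\ell(p)=f_\ell(t_1,p)f_\ell(t_2,p)-1$ (and $\delta_p(p)=0$), verify (I)--(III) from Gekeler's formula \eqref{fell}, take $\Delta_{\ell^k}(u)=-1+m(t_1,u;\ell^k)m(t_2,u;\ell^k)/(\ell^{4k-4}(\ell^2-1)^2)$ so that (IV.c) is exactly the limit of Theorem \ref{thrm: ctonetwo}, control $E$ by Bombieri--Vinogradov and $M$ by counting integers $m$ with $|n|m^2+t_i^2\in(4x,8x]$, identify $P=\prod_\ell(1+\Delta_\ell)=\pi^2c_{t_1,t_2}$ against the $1/\pi^2$ sitting in the archimedean part of the weight, and finish with Corollary \ref{cor: ct} for $t_1=\pm t_2$. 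All of that matches the paper.

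There is, however, a concrete error in your normalization that would make the main term come out wrong as written. Requiring $w_pP_p=H(t_1^2-4p)H(t_2^2-4p)/p^2$ together with Theorem \ref{theorem: product formula} forces $w_p=f_\infty(t_1,p)f_\infty(t_2,p)f_p(t_1,p)f_p(t_2,p)\asymp 1/p$, not $1/(p\log p)$; note also that $p^2f_\infty(t_1,p)f_\infty(t_2,p)\asymp p$ (not $\asymp 1$), so ``absorbing'' it into $1/(p\log p)$ produces a weight of size $1/\log p$, not $1/(p\log p)$. More importantly, your closing appeal to ``$\sum_{p\le y}1/(p\log p)=\log\log y+C+o(1)$ (Mertens)'' is false: that series converges, so with your weight the dyadic main terms would sum to $O(1)$ rather than to $c_{t_1,t_2}\log\log x$. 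Mertens' theorem is $\sum_{p\le y}1/p=\log\log y+M+o(1)$, equivalently $\sim\int_2^y du/(u\log u)$. Once the weight is corrected to $w_p\asymp 1/p$, each block gives $W=\tfrac{1}{\pi^2}\int_x^{2x}\tfrac{du}{u\log u}+O_{t_1,t_2}(1/\log x)$, the error terms ($\widetilde W\ll\log\log x$ by Brun--Titchmarsh, $E\ll(\log x)^{-B}$ by Bombieri--Vinogradov, $M\ll x^{-1/2}$) are negligible, and summing over dyadic ranges yields the stated asymptotic exactly as in the paper.
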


\begin{proof} 
By splitting up the sum into dyadic intervals we see that it suffices to show that
	\begin{equation*}
\underset{x<p\leq 2x}{\sum \nolimits^{\prime}} \frac{H(t_1^2-4 p)H(t_2^2-4 p)}{p^2}=c_{t_1,t_2}\int_x^{2x}\frac{du}{u\log u}+O_{t_1,t_2}\left(\frac{1}{\log x}\right).
	\end{equation*}
We then, by Theorem \ref{theorem: product formula}, observe that 
$$\frac{H(t_1^2-4 p)H(t_2^2-4 p)}{p^2}
	=w_pP_p=w_p\cdot \prod_{\ell}(1+\delta_\ell(p)),$$
where
$$w_p={f_\infty(t_1,p)f_\infty(t_2,p)f_p(t_1,p)f_p(t_2,p)}$$
and
$$\delta_\ell(p):=\begin{cases} f_\ell(t_1,p)f_\ell(t_2,p)-1 &\text{ if }	\ell\neq p, \\ 0 & \text{ otherwise}.\end{cases}$$

We now check that the conditions (I), (II), and (III), assumed in Theorem \ref{thrm: DKS}, hold for $\delta_\ell(p)$. 
We have that (I) holds by the definition of $\delta_\ell(p)$ and (II) also holds, since from \eqref{fell} we have that $$\delta_\ell(p)=
\left(1+O\left( \frac{1}{\ell} \right) \right)^2-1
=O\left(\frac{1}{\ell}\right).$$
Also, from \eqref{fell} we have, for $\ell \nmid D(t,p)$, that $$f_\ell(t,p)=1+\frac{\left(\tfrac{D(t,p)}{\ell}\right)}{\ell}+O\Bigg(\frac{1}{\ell^2}\Bigg).$$
Thus, (III) holds with $\lambda_1=\lambda_2=1$.

Next, for $u\in \mathbb{Z}$, we define
\begin{equation*}
\Delta_{\ell^k}(u):=
-1+\frac{ \mtone\mttwo}{\ell^{4k-4}(\ell^2-1)^2}.
\end{equation*}
By employing Theorem \ref{thrm: ch-thrm-five} we can show that (IV.a), (IV.b), and (IV.d) hold for $\Delta_{\ell^k}(u)$. We also have that (IV.c) holds with
\begin{equation*}
\Delta_\ell=-1+\frac{1}{(\ell-1)^3(\ell+1)^2}\lim_{k\rightarrow \infty}\frac{\Sts}{\ell^{5k-5}}
\end{equation*}
since the limit exists by Theorem \ref{thrm: ctonetwo} and by Theorem \ref{thrm: ch-thrm-five} $\lim_{\ell\rightarrow \infty} (1+\Delta_\ell)=1$.

Lastly, by the Brun-Titchmarsh inequality,  (V) holds for $w_p$ as defined above with $\widetilde{W}=\log\log x$.

Thus we may apply Theorem \ref{thrm: DKS} with $\epsilon>0$ and $C>1$. It remains to show that the error terms are not too large. We first note that by Theorem \ref{thrm: ch-thrm-five} and the definition of $f_\infty$ we have
\begin{align*}
W=& \sum_{x<p\le 2x} w_p = \frac{1}{\pi^2}\sum_{x<p\leq 2x}\left(\frac{1}{p}+O_{t_1,t_2}\left(\frac{1}{p^{2}}\right)\right)=\frac{1}{\pi^2}\int_x^{2x}\frac{du}{u\log u}+O_{t_1,t_2}\left(\frac{1}{\log x}\right).
\end{align*}
We also find that 
\begin{equation*}
E\ll\sum_{q\leq Q}\max_{(u,q)=1}\Bigg|\sum_{\substack{x<p\leq 2x \\ p\equiv u \imod{q}}}\frac{1}{ p}-\frac{1}{\varphi(q)}\sum_{x<p\leq 2x}\frac{1}{p}\Bigg|\ll \frac{1}{(\log x)^B},
\end{equation*}
for any fixed $B>0$, by the Bombieri-Vinogradov theorem. Furthermore, we have that
$$M=\max_{\substack{n\leq -1 \\ 1\leq i \leq 2}} \sum_{\substack{x<p\leq 2x \\ (t_i^2-4p)/n \text{ is a square}}}w_p \ll \max_{\substack{n\leq -1 \\ 1\leq i \leq 2}} \frac{\#\{m\in \ZZ: 4x<|n|m^2+t_i^2\leq 8x\}}{x}\ll \frac{1}{\sqrt{x}}.$$
Thus, we conclude that in \eqref{last-equation} the error terms involving $\widetilde{W}$, $E$, and $M$ are smaller than the main term of $W$ and the result follows.
\end{proof}

\section{Computational evidence for Conjecture \ref{conj: constant-distinct-ts}}\label{section: comp}
In this section we consider $c_{t_1,t_2}$ for the cases not covered by Theorem \ref{samet} and Proposition \ref{proved-cases}. We have written a program in SAGE to compute $\Sk=\Sts/\ell^{5k-5}$, using Theorem \ref{thrm: ch-thrm-five}, for the first few small primes $\ell$ and various integers $t_1,t_2$, and $k$. We then used rational interpolation approximation in MAPLE to produce a formula for $\Sk$ as a rational function in terms of $\ell$. We summarize our findings below.

\begin{conj}\label{differentt} Let  $t_1$ and $t_2$ be two fixed integers, where $t_1 \neq \pm t_2$. Let $\ell$ be a prime and let $$\alpha=\alpha(t_1,t_2,\ell):=\max\{\nu_{\ell}(t_1+t_2),\nu_{\ell}(t_1-t_2)\}.$$ 

If $\ell \nmid  2t_1 t_2$ and  $k\geq \alpha+1$, then
	\begin{equation*}
\frac{\Sts}{\ell^{5k-5}}=\begin{cases} \ell^2(\ell^3-\ell^2-\ell-2) -\ell^3& \eif \alpha=0, \\
	\ell^2(\ell^3-\ell^2-\ell-2)+\frac{\ell^2(\ell^{2\alpha}-\ell^2-\ell-1)}{\ell^{2\alpha}(\ell+1)} & \eif \alpha\geq 1.
	\end{cases}
	\end{equation*}

If $\ell=2$ and $k\geq \alpha+1$, then 
\begin{equation*}
\frac{\Sttwo}{2^{5k-5}}=\begin{cases}
15 & \eif 2\mid (t_1,t_2), 4\nmid (t_1,t_2),~{\rm and}~ \alpha=1,\\
\frac{103}6 - \frac{7}{3\cdot2^{2\alpha-3}}& \eif 2\mid (t_1,t_2), 4\nmid(t_1,t_2),~{\rm and}~\alpha\geq 3.
\end{cases}
\end{equation*}
\end{conj}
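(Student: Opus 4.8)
The plan is to prove Conjecture~\ref{differentt} along exactly the lines of the proofs of Theorem~\ref{samet} and Proposition~\ref{proved-cases}: write $\Sts=\sum_{u\in\zstar}m(t_1,u;\ell^k)\,m(t_2,u;\ell^k)$ and substitute the closed form of Theorem~\ref{thrm: ch-thrm-five} for each factor. The one new ingredient, relative to the case $t_1=\pm t_2$, is the elementary observation that
\[
D(t_1,u)-D(t_2,u)=t_1^2-t_2^2
\]
is \emph{independent of} $u$. Writing $\beta:=\nu_\ell(t_1^2-t_2^2)$ (so $\beta=\alpha$ when $\ell\nmid 2t_1t_2$, while $\beta$ and $\alpha$ must be tracked separately when $\ell=2$), this identity couples the two valuations $n_i:=\nu_{\ell,k}(D(t_i,u))$: whenever $n_1<\beta$ one has $n_2=n_1$ and $D(t_1,u)/\ell^{n_1}\equiv D(t_2,u)/\ell^{n_1}\pmod{\ell^{\beta-n_1}}$, so the two discriminants satisfy the \emph{same} residue condition appearing in Theorem~\ref{thrm: ch-thrm-five}, and hence $m(t_1,u;\ell^k)=m(t_2,u;\ell^k)$ for such $u$.

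I would therefore split $\zstar$ into the two ranges $n_1<\beta$ and $n_1\ge\beta$. On the first range the summand is a perfect square $m(t_1,u;\ell^k)^2$, and the number of $u$ with a prescribed $n_1$ (and a prescribed Legendre symbol, resp.\ mod-$4$/mod-$8$ class) is given by the counts already established in Lemma~\ref{kroncount} (resp.\ Lemmas~\ref{lemma: teven} and \ref{lemma: toddorfour}); summing these is essentially the ``diagonal'' computation of Theorem~\ref{samet}, cut off at level $\beta$, and produces the $k$-independent bulk of the answer. On the second range, $\ell^\beta\mid D(t_1,u)$ forces $u$ into a single invertible residue class modulo $\ell^\beta$; reparametrising locally by $D(t_1,u)=\ell^\beta w$ gives $D(t_2,u)=\ell^\beta(w-c)$ with $c:=(t_1^2-t_2^2)/\ell^\beta$ a unit, so $n_1=\beta+\nu_\ell(w)$ and $n_2=\beta+\nu_\ell(w-c)$ are \emph{complementary} --- since $c$ is a unit, at most one of $w,\,w-c$ is divisible by $\ell$. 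One is then left with a short finite list of pairs $\big(\nu_\ell(w),\nu_\ell(w-c)\big)$, each weighted by an elementary count and by the relevant residue symbols of $w/\ell^{\nu_\ell(w)}$ and $(w-c)/\ell^{\nu_\ell(w-c)}$; plugging these into Theorem~\ref{thrm: ch-thrm-five}, summing, and simplifying (by hand or in \textsc{Maple}) should reproduce the rational functions of $\ell$ in the statement and, in particular, show that $\Sts/\ell^{5k-5}$ stabilizes at $k=\alpha+1$.

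The main obstacle is combinatorial rather than conceptual: as remarked just before Proposition~\ref{proved-cases}, the case count is considerably larger than the thirteen cases of Theorem~\ref{samet}, and when $\ell=2$ each of $w$ and $w-c$ brings its own mod-$4$/mod-$8$ refinement from Theorem~\ref{thrm: ch-thrm-five}, so the $\ell=2$ sub-cases proliferate (and the boundary cases where $\beta-n_1\in\{1,2\}$ need separate care); keeping the bookkeeping finite and verifying the final simplifications is the bulk of the work, which is the reason the result is stated above only conjecturally. Two checks are available. First, by Proposition~\ref{prop: primesum} the $\ell$-factor equals $\lim_{x}\tfrac1{\pi(x)}\sum_{p\le x}f_\ell(t_1,p)f_\ell(t_2,p)$, and by \eqref{fell} the summand depends on $p$ only through $\delta(t_1,p)$, $\delta(t_2,p)$ and two Legendre symbols; the joint distribution of these over primes $p$ --- governed, once more, by the coupling $\nu_\ell\big((t_1^2-4p)-(t_2^2-4p)\big)=\beta$ together with Dirichlet's theorem modulo a suitable power of $\ell$ --- gives an alternative route to the same rational functions that avoids the full matrix-count formula. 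Second, the explicit \textsc{Sage} evaluation of $\Sts$ via Theorem~\ref{thrm: ch-thrm-five} for small $\ell$, $k$, $t_1$, $t_2$, followed by rational interpolation in $\ell$ in \textsc{Maple}, as carried out in Section~\ref{section: comp}, already matches the conjectured formulas.
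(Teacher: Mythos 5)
Note first that the paper does not prove this statement: it is stated as Conjecture \ref{differentt} and is supported only by the computational procedure of Section \ref{section: comp} (evaluating $\Sts$ via Theorem \ref{thrm: ch-thrm-five} in \textsc{Sage} for small $\ell$, $k$, $t_1$, $t_2$, then interpolating a rational function of $\ell$ in \textsc{Maple}), together with the remark before Proposition \ref{proved-cases} that the case-by-case method of Theorem \ref{samet} could in principle be extended but becomes much more complicated. Your outline is therefore being compared against computational evidence rather than against a proof. Your key structural observations are correct and genuinely add something the paper does not make explicit: since $D(t_1,u)-D(t_2,u)=t_1^2-t_2^2$ is independent of $u$, one has $n_2=n_1$ with matching residue symbols whenever $n_1<\beta:=\nu_\ell(t_1^2-t_2^2)$ (and $\beta=\alpha$ exactly when $\ell\nmid 2t_1t_2$, since at most one of $\nu_\ell(t_1\pm t_2)$ can then be positive), while on the complementary range the substitution $D(t_1,u)=\ell^\beta w$, $D(t_2,u)=\ell^\beta(w-c)$ with $c$ a unit forces $\min\{\nu_\ell(w),\nu_\ell(w-c)\}=0$. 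This correctly explains why the answer should depend on $t_1,t_2$ only through $\alpha$ and why $\mathcal{S}_k$ should stabilize at $k=\alpha+1$, which the paper only observes empirically.

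The genuine gap is that the step which would actually establish the displayed rational functions is never performed. Your argument reduces the conjecture to a finite computation --- the analogue of \eqref{eqn: scoprimebig}, with the counts of Lemma \ref{kroncount} (resp.\ Lemmas \ref{lemma: teven} and \ref{lemma: toddorfour}) truncated at level $\beta$ plus an off-diagonal tail indexed by the pairs $(\nu_\ell(w),\nu_\ell(w-c))$ --- but you do not carry it out, and you say so yourself. In particular, the coefficients $\ell^2(\ell^3-\ell^2-\ell-2)-\ell^3$, the correction term $\ell^2(\ell^{2\alpha}-\ell^2-\ell-1)/(\ell^{2\alpha}(\ell+1))$, and the values $15$ and $\tfrac{103}{6}-\tfrac{7}{3\cdot 2^{2\alpha-3}}$ are never derived; for $\ell=2$ the boundary cases $\beta-n_1\in\{1,2\}$, where the mod-$4$ and mod-$8$ classes of the two discriminants need not agree, are flagged but not resolved. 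So, as a proof of the stated formulas, the proposal is incomplete in exactly the place where the content lies; as a justification of the conjecture it is on the same footing as the paper's (numerical verification plus a plausible but unexecuted strategy), with the added value of a structural explanation for the role of $\alpha$ and the stabilization point.
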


\begin{remark}
{\em (i) Note that in the cases in the above conjecture $\Sk$ stabilizes at $k=\alpha+1$. }	

{\em (ii) We have verified the truth of Conjecture \ref{differentt} for all integer pairs $(t_1, t_2)$ with $1\leq t_1, t_2\leq 100$ for which $\alpha=0$, for primes $2, 3, \ldots, 17$ (the first 7 primes), and for $1\leq k\leq 4$.}

{\em (iii) Note that if we can show that, for odd primes $\ell$ and $\alpha=0$, $\Sk$ is a polynomial of degree $5$, then our computations in (ii) provide a proof of the conjecture for odd primes $\ell$ in this case.}

{\em (iv) We have also verified the truth of conjecture \ref{differentt} for all integer pairs $(t_1, t_2)$ with $1\leq t_1, t_2\leq 100$ for which $0\leq \alpha \leq 5$, for primes $2, 3, \ldots,19$ (the first 8 primes), and for $\alpha+1\leq k\leq \alpha+3$.} 

\end{remark}

\subsection*{Acknowledgements}
The authors would like to thank Julia Gordon for correspondence on an earlier version of this paper and for her clarifying comments regarding the stability of $\mathcal{S}_k$. We also would like to thank Jeff Achter and Jesse Thorner for correspondence and comments. The first author thanks Forrest Francis for help with computations in Lemma \ref{lemma: ldivides t_1}.

\nocite{*}

\begin{rezabib}

\bib{ADJ}{article}{
   author={Akbary, Amir},
   author={David, Chantal},
   author={Juricevic, Robert},
   title={Average distributions and products of special values of
   $L$-series},
   journal={Acta Arith.},
   volume={111},
   date={2004},
   number={3},
   pages={239--268},
   issn={0065-1036},
   review={\MR{2039225 (2004k:11086)}},
   doi={10.4064/aa111-3-3},
}

\bib{BJ}{article}{
   author={Baier, Stephan},
   author={Jones, Nathan},
   title={A refined version of the Lang-Trotter conjecture},
   journal={Int. Math. Res. Not. IMRN},
   date={2009},
   number={3},
   pages={433--461},
   issn={1073-7928},
   review={\MR{2482121}},
   doi={10.1093/imrn/rnn136},
}

\bib{CH}{article}{
	author={Wouter Castryck},
	author={Hendrik Hubrechts},
	title={The distribution of the number of points modulo an integer on elliptic curves over finite fields},
	journal={Ramanujan J.},
	volume={30},
	date={2013},
	number={2},
	pages={223--242},
	issn={},
	review={\MR{3017927}},
	doi={},
} 

\bib{CDKS}{article}{
   author={Chandee, Vorrapan},
   author={David, Chantal},
   author={Koukoulopoulos, Dimitris},
   author={Smith, Ethan},
   title={The frequency of elliptic curve groups over prime finite fields},
   journal={Canad. J. Math.},
   volume={68},
   date={2016},
   number={4},
   pages={721--761},
   issn={0008-414X},
   review={\MR{3518992}},
   doi={10.4153/CJM-2015-013-1},
}

\bib{CDSS}{article}{
	author={Cojocaru, Alina},
	author={Davis, Rachel},
	author={Silverberg, Alice},
	author={Stange, Katherine}
	title={Arithmetic properties of the Frobeniius traces defined by a rational abelian variety, with two appendices by J-P. Serre},
	journal={Int. Math. Res. Not. IMRN},
	date={2017},
	number={12},
	pages={3557--3602},
	issn={},
	review={},
	doi={10.1093/imrn/rnw058},
}

\bib{DKS}{article}{
	author={David, Chantal},
	author={Koukoulopoulos, Dimitris},
	author={Smith, Ethan},
	title={Sums of Euler products and statistics of elliptic curves},
	journal={Math. Ann.},
	date={2017},
	volume={368}
	number={1--2},
	pages={685--752},
	issn={},
	review={},
	doi={},
}

\bib{DP}{article}{
	author={David, Chantal},
	author={Pappalardi, Francesco},
	title={Average Frobenius distributions of elliptic curves},
   journal={Int. Math. Res. Not.},
   date={1999},
   number={4},
   pages={165--183},
   issn={},
   review={},
  doi={},
}

\bib{Elk}{article}{
	author={Elkies,  Noam},	
	title={Distribution  of  supersingular  primes. Journ\'{e}es Arithm\'{e}tiques, 1989 (Luminy, 1989)},
	journal={Ast\'{e}risque No. 198--200 (1991)},
	date={1992},
	pages={127--132},
	review={\MR{1144318}}
}

\bib{F}{article}{
	author={Gerd Faltings},
	title={Endlichkeitssa\"tze fu\"r abelsche Varieta\"ten u\"ber Zahlko\"rpern},
	journal={Invent. Math.},
	volume={73},
	year={1983},
	number={3},
	pages={349-366},
	review={\MR{0718935}},
	 doi={10.1007/BF01388432},
}

\bib{FM}{article}{
	   author={Fouvry, {\'E}.},
	   author={Murty, M. Ram},
	title={On the distribution of supersingular primes},
	journal={Canad. J. Math.},
	volume={48},
	year={1996},
	number={1}, 
	pages={81--104},
}

\bib{FM2}{article}{
   author={Fouvry, {\'E}.},
   author={Murty, M. Ram},
   title={Supersingular primes common to two elliptic curves},
   conference={
      title={Number theory},
      address={Paris},
      date={1992--1993},
   },
   book={
      series={London Math. Soc. Lecture Note Ser.},
      volume={215},
      publisher={Cambridge Univ. Press, Cambridge},
   },
   date={1995},
   pages={91--102},
   review={\MR{1345175}},
   doi={10.1017/CBO9780511661990.007},
}

\bib{G}{article}{
   author={Gekeler, Ernst-Ulrich},
   title={Frobenius distributions of elliptic curves over finite prime
   fields},
   journal={Int. Math. Res. Not.},
   date={2003},
   number={37},
   pages={1999--2018},
   issn={1073-7928},
   review={\MR{1995144 (2004d:11048)}},
   doi={10.1155/S1073792803211272},
}

\bib{HSBT}{article}{
	author={Harris, Michael},
	author={Shepherd-Barron, Nicholas},
	author={Taylor, Richard},
	title={A family of Calabi-Yau varieties and potential automorphy.},
	journal={Ann. of Math.},
	volume={171},
	date={2010},
	number={2},
	pages={779--813},
	issn={},
	review={},
	doi={},
}

\bib{IR}{book}{
   author={Ireland, Kenneth},
   author={Rosen, Michael},
   title={A classical introduction to modern number theory},
   series={Graduate Texts in Mathematics},
   volume={84},
   edition={2},
   publisher={Springer-Verlag, New York},
   date={1990},
   pages={xiv+389},
   isbn={0-387-97329-X},
   review={\MR{1070716}},
   doi={10.1007/978-1-4757-2103-4},
}

\bib{Ito}{book}{
   author={It\=o, Kiyosi},
   title={Introduction to probability theory},
   note={Translated from the Japanese by the author},
   publisher={Cambridge University Press, Cambridge},
   date={1984},
   pages={x+213},
   isbn={0-521-26418-9},
   isbn={0-521-26960-1},
   review={\MR{777504}},
}

\bib{J2}{article}{
   author={Jones, Nathan},
   title={Pairs of elliptic curves with maximal Galois representations},
   journal={J. Number Theory},
   volume={133},
   date={2013},
   number={10},
   pages={3381--3393},
   issn={0022-314X},
   review={\MR{3071819}},
   doi={10.1016/j.jnt.2013.03.002},
}

\bib{K}{article}{
   author={Katz, Nicholas M.},
   title={Lang-Trotter revisited},
   journal={Bull. Amer. Math. Soc. (N.S.)},
   volume={46},
   date={2009},
   number={3},
   pages={413--457},
   issn={0273-0979},
   review={\MR{2507277}},
   doi={10.1090/S0273-0979-09-01257-9},
}

\bib{LT}{book}{
   author={Lang, Serge},
   author={Trotter, Hale},
   title={Frobenius distributions in ${\rm GL}_{2}$-extensions},
   series={Lecture Notes in Mathematics, Vol. 504},
   note={Distribution of Frobenius automorphisms in ${\rm
   GL}_{2}$-extensions of the rational numbers},
   publisher={Springer-Verlag, Berlin-New York},
   date={1976},
   pages={iii+274},
   review={\MR{0568299 (58 \#27900)}},
}

\bib{MP}{article}{
   author={M. Ram Murty},
   author={Sudhir Pujahari},
   title={Distinguishing Hecke eigenforms},
   journal={Proc. Amer. Math. Soc.},
   volume={145},
   date={2017},
   pages={1899--1904},
   }

\bib{KM}{book}{
	author={Murty, V. Kumar},
	title={ Modular forms and the Chebotarev density theorem. II},
	series={Analytic number theory (Kyoto, 1996)},
	date={1997},
	pages={287--308},
	review={\MR{1694997 }},
}

\bib{O}{article}{
   author={Oesterl\'e, Joseph},
   title={R\'eduction modulo $p^{n}$\ des sous-ensembles analytiques ferm\'es
   de ${\bf Z}^{N}_{p}$},
   language={French},
   journal={Invent. Math.},
   volume={66},
   date={1982},
   number={2},
   pages={325--341},
   issn={0020-9910},
   review={\MR{656627}},
   doi={10.1007/BF01389398},
}

\bib{S}{article}{
   author={Serre, Jean-Pierre},
   title={Propri\'et\'es galoisiennes des points d'ordre fini des courbes
   elliptiques},
   language={French},
   journal={Invent. Math.},
   volume={15},
   date={1972},
   number={4},
   pages={259--331},
   issn={0020-9910},
   review={\MR{0387283}},
}

\bib{S2}{article}{
   author={Serre, Jean-Pierre},
   title={Quelques applications du th\'eor\`eme de densit\'e de Chebotarev},
   language={French},
   journal={Inst. Hautes \'Etudes Sci. Publ. Math.},
   number={54},
   date={1981},
   pages={323--401},
   issn={0073-8301},
   review={\MR{644559}},
}

\bib{MR817210}{book}{
   author={Silverman, Joseph H.},
   title={The arithmetic of elliptic curves},
   series={Graduate Texts in Mathematics},
   volume={106},
   publisher={Springer-Verlag, New York},
   date={1986},
   pages={xii+400},
   isbn={0-387-96203-4},
   review={\MR{817210}},
}

\bib{TZ}{article}{
	author={Thorner, Jesse},
	author={Zaman, Asif},
	title={A Chebotarev Variant of the Brun-Titchmarsh Theorem and
Bounds for the Lang-Trotter conjectures},
	journal={Int. Math. Res. Not.},
        date={2017},
        pages={1--37},
        doi={10.1093/imrn/rnx031},	
	}

\bib{Z}{article}{
	author={Zywina, David},
	title={Bounds for the Lang-Trotter Conjecture},
	series={SCHOLAR—a scientific celebration highlighting open lines of arithmetic research},
	note={Contemp. Math.},
	publisher={ Centre Rech. Math. Proc., Amer. Math. Soc., Providence, RI},
	date={2015},
	number={655},
	pages={235--256},
	issn={},
	review={\MR{3453123}},
	doi={},	
}

\end{rezabib}

\end{document}